%%%%%%%%%%%%%%%%%%%%%%%%%%%%%%%%%%%%%%%%%%%%%%%%%%%%%%%%%%%%%%%%%%%%%%%%
%%%%%%%%%%%%%%%%%%%%%%%%%%%%%%%%%%%%%%%%%%%%%%%%%%%%%%%%%%%%%%%%%%%%%%%%
\documentclass[11pt]{article}

\usepackage{etex}
\usepackage[margin={1.2in}]{geometry}
\usepackage[titletoc]{appendix}

%\usepackage[abbrev]{amsrefs}        %amsrefs
%\usepackage{cite}
%\usepackage{hyperref}

% sideways tables and figures
\usepackage{rotating}

% tables that spill over multiple pages
\usepackage{longtable}

% references
%\usepackage[square]{natbib}

% fonts that are nicer than defaults
\usepackage[sc]{mathpazo}
\usepackage{courier}

% Use 8-bit encoding that has 256 glyphs, pretty please
\usepackage[utf8]{inputenc}
\usepackage[T1]{fontenc}

% babel is required for blindtext, which generates random text
\usepackage[english]{babel}
\usepackage{blindtext}

% math support
\usepackage{amsmath}

% Slightly tweak font spacing for aesthetics
\usepackage{microtype}

% ADDITONAL PACKAGES
\usepackage{amsmath}
\usepackage{amssymb}
\usepackage{amsthm}
\usepackage{color}
\usepackage{latexsym,extarrows}

% xymatrix

\usepackage[all]{xy}
%\usepackage{color}
%\usepackage[usenames,dvipsnames]{color}

%\usepackage{graphicx}

%\usepackage[all]{xy}
%\usepackage{tabularx}
%\usepackage{multirow}
%%%%%%%%%%%%%%%%%%%%%

% You need the footmisc package with the stable option if you want to have
% footnotes inside section titles, for example to say that a particular chapter
% has been co-authored with someone. The multiple option ensures that there is a
% comma between two consecutive footnotes
\usepackage[stable,multiple]{footmisc}

% Nicer captions
\RequirePackage[font=small,format=plain,labelfont=bf,textfont=it]{caption}
\addtolength{\abovecaptionskip}{1ex}
\addtolength{\belowcaptionskip}{1ex}

%%%%%%
\numberwithin{equation}{section}

%\UseAllTwocells
\begin{document}

\setlength{\unitlength}{1mm}
%%%%%%%%%%%%%%%%%%%%%%%%%%%%%%%%%%%%%%%%%%%%%%%%%%%%%%%%%%%%%%%%%%%%%%%%
%%%%%%%%%%%%%%%%%%%%%%%%%%     Macros      %%%%%%%%%%%%%%%%%%%%%%%%%%%%%
%%%%%%%%%%%%%%%%%%%%%%%%%%%%%%%%%%%%%%%%%%%%%%%%%%%%%%%%%%%%%%%%%%%%%%%%
\def\e#1\e{\begin{equation}#1\end{equation}}
\def\ea#1\ea{\begin{align}#1\end{align}}
\def\eq#1{{\rm(\ref{#1})}}
\theoremstyle{plain}% default
\newtheorem{thm}{Theorem}[section]
\newtheorem{lem}[thm]{Lemma}
\newtheorem{prop}[thm]{Proposition}
\newtheorem{cor}[thm]{Corollary}
\theoremstyle{definition}
\newtheorem{dfn}[thm]{Definition}
\newtheorem{ex}[thm]{Example}
\newtheorem{rem}[thm]{Remark}
\newtheorem{conjecture}{Conjecture}

%%% defining new commands

\def\q{\mathbf{q}}
\newtheorem*{mainconjecture}{{\bf Conjecture}}
\newtheorem*{mainthmA}{{\bf Theorem A}}
\newtheorem*{maincorC}{{\bf Corollary C}}
\newtheorem*{mainthmB}{{\bf Theorem B}}
\newtheorem*{mainthmD}{{\bf Theorem D}}
%Define some notations here.
%\def\dim{\mathop{\rm dim}\nolimits}
%\newcommand{\comment}[1]{\textcolor{red}{[#1]}} %for displaying red texts
%\newcommand{\comment}[1]{} %for not displaying red texts
%\newcommand{\ccomment}[1]{\textcolor{blue}{[#1]}} %for displaying blue texts

\newcommand{\LD}{\langle}
\newcommand{\RD}{\rangle}

\newcommand{\DL}{\langle\langle}
\newcommand{\DR}{\rangle\rangle}

%%%
\makeatletter
\newcommand{\subjclass}[2][2010]{%
  \let\@oldtitle\@title%
  \gdef\@title{\@oldtitle\footnotetext{#1 \emph{Mathematics Subject Classification.} #2}}%
}
\newcommand{\keywords}[1]{%
  \let\@@oldtitle\@title%
  \gdef\@title{\@@oldtitle\footnotetext{\emph{Key words and phrases.} #1.}}%
}
\makeatother
%%%

%%%%%%%%%%%%%%%%%%%%%%%%%%%%%%%%%%%%%%%%%%%%%%%%%%%%%%%%%%%%%%%%%%%%%%%%
%%%%%%%%%%%%%%%%%%%%%%%    Text of paper    %%%%%%%%%%%%%%%%%%%%%%%%%%%%
%%%%%%%%%%%%%%%%%%%%%%%%%%%%%%%%%%%%%%%%%%%%%%%%%%%%%%%%%%%%%%%%%%%%%%%%
%The following title can be changed according to needs.%
\title{\bf Ramanujan Identities and Quasi-Modularity in Gromov-Witten Theory}
\author{Yefeng Shen and Jie Zhou}
\date{}
%\subjclass{14N35, 11Fxx}
\maketitle

%%%%%%%%%%abstract %%%%%%
%%%%%%%%%%%%%%%%%%%%

\begin{abstract}
We prove that the ancestor Gromov-Witten correlation functions of one-dimensional compact Calabi-Yau orbifolds are quasi-modular forms. 
This includes the pillowcase orbifold which can not yet be handled by using Milanov-Ruan's B-model technique.
We first show that genus zero modularity is obtained from the phenomenon that the system of WDVV equations is essentially equivalent to the set of Ramanujan identities satisfied by the generators of the ring of quasi-modular forms for a certain modular group associated to the orbifold curve. Higher genus modularity then follows by using tautological relations. 
\end{abstract}

%%%%%%%%%%%%%%%%%%

\setcounter{tocdepth}{2} \tableofcontents

%%%%%%%%%%%%%%%%%%%%
%%%%%%%introduction%%%%%%%%

\section{Introduction}

A folklore conjecture from physics \cite{Bershadsky:1993ta, Bershadsky:1993cx, Aganagic:2006wq} says that the generating functions of Gromov-Witten (GW) invariants of Calabi-Yau (CY) manifolds are quasi-modular forms or their generalizations. This is a remarkable conjecture since we know very little about higher genus GW invariants beyond saying that their generating functions are formal series. 

There are several important results in this direction.
For elliptic curves, the conjecture on the descendant GW invariants was solved in \cite{Dijkgraaf:1995, Kaneko:1995, Eskin:2001, Okunkov:2002}. 
For K3 surfaces, the generating functions of reduced GW invariants can be expressed in terms of modular forms by the KKV conjecture \cite{Katz:1999xq}. This was further studied in a sequence of works including
\cite{Yau:1996, Beauville:1999, Katz:1999xq, Bryan:2000, Klemm:2010, Maulik:2010, Maulik:2013, Pandharipande:2014}.
For CY 3-folds, which are among the most interesting targets, 
Huang-Klemm-Quackenbush \cite{Huang:2006hq} used the global properties of the generating functions to predict higher genus GW invariants of the quintic 3-fold up to genus 51. 
Recently, the modularity for higher genus GW theory for some special non-compact CY 3-folds was established in \cite{Aganagic:2006wq, Alim:2013eja}.
The results for CY 3-folds were obtained by using mirror symmetry which follows closely the original approach of BCOV \cite{Bershadsky:1993cx}.

The understanding of modularity proves to be both beautiful and useful in complete calculations of GW invariants. \\

A few years ago, Milanov and Ruan \cite{Milanov:2011} showed that such modularity properties can be extended to compact CY \emph{orbifolds}. They proved that three out of the four elliptic orbifold $\mathbb{P}^1$'s have modularity, by using mirror symmetry and the reconstruction techniques in \cite{Krawitz:2011}.
Their approach relies on the existence of a higher genus B-model mirror, which is a very difficult problem on its own.  

A well-known example is the elliptic orbifold curve $\mathbb{P}^1_{2,2,2,2}$ (also called the pillowcase orbifold \cite{Eskin:2006}), for which the  B-model techniques we know so far, including those in \cite{Milanov:2011}, do not apply.

In searching for a way to handle the example $\mathbb{P}^1_{2,2,2,2}$, we introduce a purely A-model approach which does not use ingredients from mirror symmetry. 
The approach combines and extends the ideas and techniques in \cite{Satake:2011} and \cite{Krawitz:2011}.
We start by observing that a subset of the WDVV equations, which are among the most fundamental relations in GW theory, coincides with the Ramanujan identities for the generators of the ring of quasi-modular forms for some modular group \cite{Satake:2011}. 
Based on this, it follows naturally that the genus zero theory is modular. Using Getzler's relation \cite{Getzler:1997} and
some tautological relations \cite{Ionel:2002, Faber:2005, Faber:2010}
on the moduli space of pointed curves, we 
reconstruct higher genera generating series from the genus zero ones, following \cite{Krawitz:2011},
and hence
obtain modularity for all genera. We then apply the same strategy to all one-dimensional compact CY orbifolds in a systematical way. 

Hence in addition to reproducing the all genera results about all elliptic orbifold curve except for the pillowcase orbifold in \cite{Milanov:2011, Milanov:2014} obtained by using mirror symmetry,
and the genus zero result about two of the elliptic orbifold curves ($\mathcal{X}_{r}, r=2,3$ below) in \cite{Satake:2011} obtained by using WDVV equations, we establish the modularity of all genera GW theory for $\mathbb{P}^1_{2,2,2,2}$, which could not be easily handled by A-model or B-model techniques previously.  

As a comparison to the overlapping results in \cite{Milanov:2011, Satake:2011, Milanov:2014}, the results we obtain are the same but the
viewpoint we take in this paper is slightly different. Namely, we treat the building blocks of the GW generating functions and the generators for the ring of quasi-modular forms as equally fundamental objects which get identified through the differential equations, that is, the WDVV equations and Ramanujan identities, that they satisfy.
This viewpoint is what makes it possible to 
isolate the small number of building blocks among the 
large number of terms involved in the potentials (cf. the genus zero potentials in the $\mathcal{X}_{r}, r=4,6$ cases that \cite{Satake:2011} could not fully determine) which are related by a complicated system of coupled differential equations.
It is also what leads to the later progress on the proof of LG/CY correspondence for these elliptic orbifold curves via the Cayley transform in representation theory \cite{Shen:2016}.

Although our strategy has only been carried out in dimension one cases so far, we expect that it can be generalized to higher dimensions.

%%%%%%%%%%%%%%%%%%%%%%%%%%%%%%%%%%%

\subsection{Gromov-Witten theory of Calabi-Yau 1-folds}

We now give a quick review of the GW theories of the elliptic orbifold curves.

An elliptic orbifold $\mathbb{P}^1$ is a compact complex orbifold, which is a quotient of an elliptic curve, with its  coarse moduli space the projective line $\mathbb{P}^1$. There are four such elliptic orbifolds, depicted in Figure \ref{figureellipticorbifolds}.
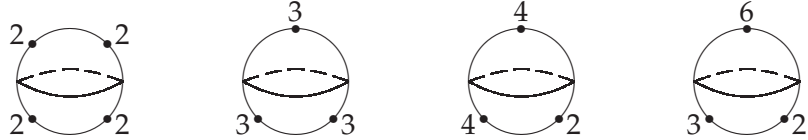
\begin{figure}[h]
 %\centering
   \renewcommand{\arraystretch}{1.2} % single-spacing 1.2 is visually more
                                    % appealing than 1
\begin{displaymath}
\begin{picture}(100,25)

    % Circles

    \put(10,10){\circle{14}}

    \put(40,10){\circle{20}}

    \put(70,10){\circle{20}}

    \put(100,10){\circle{20}}

\multiput(0,0)(30,0){4}
{
\qbezier(3,10)(10,6)(17,10)
}

\multiput(0,0)(30,0){4}
{
\qbezier(3,10)(4,10.4)(5,10.7)
\qbezier(6,11)(7,11.3)(8,11.4)
\qbezier(9,11.6)(10,11.8)(11,11.6)
\qbezier(12,11.4)(13,11.3)(14,11)
\qbezier(15,10.7)(16,10.4)(17,10)
}

\put(5,5){\circle*{1}}
\put(15,5){\circle*{1}}
\put(5,15){\circle*{1}}
\put(15,15){\circle*{1}}
\put(2,3){2}
\put(16,3){2}
\put(2,15){2}
\put(16,15){2}

\put(35,5){\circle*{1}}
\put(40,17){\circle*{1}}
\put(45,5){\circle*{1}}
\put(39,18){3}
\put(32,3){3}
\put(46,3){3}

\put(65,5){\circle*{1}}
\put(70,17){\circle*{1}}
\put(75,5){\circle*{1}}
\put(69,18){4}
\put(62,3){4}
\put(76,3){2}

\put(95,5){\circle*{1}}
\put(100,17){\circle*{1}}
\put(105,5){\circle*{1}}
\put(99,18){6}
\put(92,3){3}
\put(106,3){2}

\end{picture}
%\end{center}
\end{displaymath}
  \caption[Elliptic orbifold $\mathbb{P}^{1}$'s]{Elliptic orbifold curves $\mathcal{X}_{r}, r=2,3,4,6$.}
  \label{figureellipticorbifolds}
\end{figure}
We denote them by $\mathbb{P}^1_{2,2,2,2}, \mathbb{P}^1_{3,3,3}, \mathbb{P}^1_{4,4,2}$, and $\mathbb{P}^1_{6,3,2}$ respectively. The subscripts denote the orders of the cyclic isotropy groups of the non-trivial orbifold points.
These elliptic orbifold  $\mathbb{P}^1$'s, together with the elliptic curve, constitute all compact complex CY orbifolds\footnote{Here $X$ is CY if $c_1(TX)=0$, i.e, the first Chern class of the tangent bundle $TX$ is trivial.} of dimension one. Since these elliptic orbifolds are actually quotients of elliptic curves, we introduce the following convenient notation to denote all of the CY 1-folds:
\begin{equation*}
\mathcal{X}_r:=\mathcal{E}_r/\mathbb{Z}_r, \quad r=1, 2,3,4,6\,.
\end{equation*}
Here $r$ is the maximal order of the isotropy groups and $\mathcal{E}_r$ is some appropriate elliptic curve. For the case $r=1$, we use the convention $\mathbb{Z}_{1}=\{1\}$.
The K\"ahler cone of $\mathcal{X}_r$ is one-dimensional. It is generated by the Poincar\'e dual of the point class, denoted by $P\in H^{1,1}(\mathcal{X}_r,\mathbb{C})\cap H^2(\mathcal{X}_r,\mathbb{Z})$ and called the \emph{divisor class} or $P$-class throughout this paper.\\

Orbifold GW theory studies intersection theory on the moduli space of orbifold stable maps \cite{Chen:2002, Abramovic:2008}. 
More precisely, let $\overline{\mathcal{M}}_{g,n,d}^{\mathcal{X}_r}$ be the moduli space of orbifold stable maps from $n$-pointed stable orbifold curves of genus $g$ to $\mathcal{X}_r$, with degree $d=\int_{\beta}P\in\mathbb{Z}_{\geq 0}$,
with $\beta$ the image of the fundamental class. For the CY 1-folds, since $c_{1}(T\mathcal{X}_{r})=0$, the moduli space $\overline{\mathcal{M}}_{g,n,d}^{\mathcal{X}_r}$ has virtual dimension
\begin{equation}\label{dim-virt}
\mathrm{virdim}_{\mathbb C}=(3-\dim_{\mathbb C}\mathcal{X}_r)(g-1)+\int_{\beta}c_1(T\mathcal{X}_{r})+n\,
=2(g-1)+n\,.
\end{equation}

For $r=2,3,4,6$, let $H:= H_{\rm CR}^*(\mathcal{X}_r,\mathbb{C})$ be the \emph{Chen-Ruan comohology} of $\mathcal{X}_r$. 
For the elliptic curve $\mathcal{X}_1$, we let $H$ be its even cohomology.
The vector space $H$ is graded, with a non-degenerate pairing $\LD - ,- \RD$ given by the direct sum of the Poincar\'e pairings on the components of the \emph{inertia stack} $I\mathcal{X}_{r}$. It has a basis given by 
\begin{equation}\label{standard-basis}
\mathfrak{B}:=\left\{{\bf 1}, P\right\}\cup \mathfrak{B}_{\rm tw}:=\left\{{\bf 1}, P\right\}\cup \left\{\Delta_i^j\mid i=1,\cdots, m, j=1,\cdots a_i-1\right\}.
\end{equation}
Here $m$ is the number of orbifold points, $a_i$ is the order of the $i$-th orbifold point for $i=1,2,\cdots m$. We arrange the points in such a way that $a_1\geq a_2\cdots\geq a_m$. 
The element $\bf{1}$ is the identity in $H^{0}(\mathcal{X}_r,\mathbb{Z})$, and $\Delta_{i}$ is the Poincar\'e dual of the fundamental class of the $i$-th orbifold point for $i=1,2,\cdots, m$, with degree $2/a_i$. 
Moreover, $\Delta_{i}^j$ is \emph{ordinary cup product} of $j$-copies of $\Delta_i$. We call $\mathfrak{B}_{\rm tw}$ the \emph{set of twisted sectors}. For the elliptic curve, $\mathfrak{B}_{\rm tw}=\emptyset$.

The statement on modularity in the GW theory can be phrased intrinsically without using a basis, but for later use we shall choose and fix the aforementioned basis.

For $\{\phi_i\}_{i=1}^{n}\subset \mathfrak{B}$, one can define the \emph{ancestor GW invariants}
\begin{equation}\label{GW-correlator}
\LD \phi_1\psi_1^{k_1},\cdots,\phi_n\psi_n^{k_n}\RD_{g,n,d}:=\int_{[\overline{\mathcal{M}}_{g,n,d}^{\mathcal{X}_{r}}]^{\rm vir}}
\prod_{i=1}^{n}{\rm ev}_i^*(\phi_i) \prod_{i=1}^{n} \pi^*(\psi_i^{k_i})
\,.
\end{equation}
Here $\phi_{i},i=1,2,3\cdots n$ are called \emph{insertions}, $\psi_i := c_1(L_i)$ is called the $i$-th $\psi$-class, where $L_i$ is the $i$-th tautological line bundle on the moduli space $\overline{\mathcal{M}}_{g,n}$ of stable $n$-pointed curves of genus $g$. The map
$
{\rm ev}_{i}:\overline{\mathcal{M}}_{g,n,d}^{\mathcal{X}_{r}}\rightarrow I\mathcal{X}_{r}
$
is the evaluation map at the $i$-th point: it takes values in the \emph{inertia stack} $I\mathcal{X}_{r}$. While $\pi: \overline{\mathcal{M}}_{g,n,d}^{\mathcal{X}_{r}}\to \overline{\mathcal{M}}_{g,n}$ is the forgetful map, $[\overline{\mathcal{M}}_{g,n,d}^{\mathcal{X}_{r}}]^{\rm vir}$ is the virtual fundamental class of $\overline{\mathcal{M}}_{g,n,d}^{\mathcal{X}_{r}}$.

Let $t$ be the coordinate for the vector space spanned by the $P$-class and set $q=e^t$. Let $\Lambda_{g,n,d}^{\mathcal{X}_r}(\phi_1,\cdots,\phi_n):=\pi_*\left(\prod_{i=1}^{n}{\rm ev}_i^*(\phi_i)\right)\in H^{*}(\overline{\mathcal{M}}_{g,n})$. We define the \emph{GW class} 
$\Lambda_{g,n}^{\mathcal{X}_r}(\phi_1,\cdots,\phi_n):=\sum_{d\geq 0}\Lambda_{g,n,d}^{\mathcal{X}_r}(\phi_1,\cdots,\phi_n)\,q^d$ 
and the \emph{ancestor GW correlation function} by the following $q$-series
\begin{equation}\label{GW-correlation}
\DL \phi_1\psi_1^{k_1},\cdots,\phi_n\psi_n^{k_n}\DR_{g,n}:
=\int_{\overline{\mathcal{M}}_{g,n}}\Lambda_{g,n}^{\mathcal{X}_r}(\phi_1,\cdots,\phi_n)\prod_{i=1}^{n}\psi_i^{k_i}
=\sum_{d\geq0}\LD \phi_1\psi_1^{k_1},\cdots,\phi_n\psi_n^{k_n}\RD_{g,n,d}\,q^d.
\end{equation}
In particular, we can restrict to the cases where no $\psi$-class is included and call the corresponding quantities by 
\emph{primary
GW correlation functions}.
In this paper we shall occasionally call the GW correlation functions simply by \emph{correlators}.\\

One way to phrase the folklore conjecture for the CY 1-folds is as follows:
\begin{mainconjecture}\label{main-conj}
The GW correlation functions of CY 1-folds defined in \eqref{GW-correlation} are quasi-modular forms.
\end{mainconjecture}

By carefully analyzing the Givental-Teleman formula \cite{Givental:2001, Teleman:2012}, Milanov and Ruan \cite{Milanov:2011} proved a similar statement for \emph{ancestor correlation functions}, which are mirror \cite{Milanov:2011, Krawitz:2011, Satake:2011} to the ancestor GW correlation functions of elliptic orbifold $\mathcal{X}_r$ with $r=3,4,6$ respectively, from a global B-model of the universal unfoldings of simple elliptic singularities $E_6^{1,1}, E_7^{1,1},$ and $E_8^{1,1}$ \cite{Saito:1974}. 
So the above conjecture holds for the elliptic orbifolds $\mathcal{X}_{r}, r=3,4,6$. Unfortunately, their method fails for the elliptic orbifold $\mathcal{X}_2$ because such a (higher genus) mirror construction does not exist for $\mathcal{X}_2$.

A similar conjecture for \emph{descendant GW correlation functions} was proved for elliptic curves by \cite{Eskin:2001, Okunkov:2002}
which included the earlier results by
\cite{Dijkgraaf:1995, Kaneko:1995}
as a special case. Explicit formulas in terms of quasi-modular forms were given in \cite{Bloch:2000}.
In \cite{Eskin:2006}, Eskin and Okounkov proved that natural generating functions for enumeration of branched coverings of the pillowcase orbifold $\mathbb{P}^1_{2,2,2,2}$ are level 2 quasi-modular forms.  
Other various studies on orbifold GW theory and mirror symmetry of elliptic orbifolds can be found in, for example, \cite{Satake:2011, Krawitz:2011, Milanov:2011, Li:2011mi, Basalaev:2013, Hong:2014, Lau:2014}.\\

We will prove the conjecture for the ancestor GW correlation functions of the elliptic orbifolds in this paper.

Before we give the precise statements, we shall now briefly recall the basics on WDVV equations and Ramanujan identities. 

%%%%%%%%%%%%%%%%%%%%%%%%%%%%%%%%%%%%

\subsection{WDVV equations for elliptic orbifold curves and Ramanujan identities}
\label{sectiongenuszeromodularity}

In GW theory, WDVV equations are equivalent to the associativity of \emph{quantum cup product} (or \emph{Chen-Ruan cup product} for orbifolds \cite{Chen:2004}). They give over-determined relations for GW invariants. In particular, we can translate some of them into differential equations satisfied by the corresponding correlators.
Recall that the divisor equation in GW theory allows us to get an equation relating the insertion of a divisor class $P$ to differentiation via $\theta_{q}:=q{\partial\over \partial q}$.
It schematically takes the form
\begin{equation}\label{theta-q}
\theta_q \DL\phi_1,\cdots,\phi_n\DR_{0,n}=\DL\phi_1,\cdots,\phi_n, P\DR_{0,n+1}\,.
\end{equation}
For elliptic orbifolds, we can rewrite the $P$-class as a Chen-Ruan cup product $\bullet$ of two twisted sectors ($P=r\Delta_i\bullet\Delta_i^{a_i-1}$), where $\bullet$ is defined via
\begin{equation}\label{Chen-Ruan}
\LD\phi_1\bullet\phi_2, \phi_3\RD=\DL\phi_1,\phi_2,\phi_3\DR_{0,3}\,.
\end{equation}
The right hand side of \eqref{theta-q} becomes a polynomial of simpler correlators after applying a suitable WDVV equation.

Ramanujan identities, on the other hand, arise in a completely different context and have a different nature.
They are equations satisfied by the generators of the ring of quasi-modular forms for subgroups of the full modular group $\mathrm{PSL}(2,\mathbb{Z})$.
These equations follow from the general theory of quasi-modular forms \cite{Kaneko:1995} which tells that for nice congruence subgroups the ring of quasi-modular forms is finitely generated
and is closed under the differential $\partial_{\tau}:={\partial\over \partial \tau}$, where $\tau$ is the coordinate on the upper half plane. 
For example, for the full modular group $\mathrm{PSL(2,\mathbb{Z})}$, the ring is generated by the familiar Eisenstein series\footnote{Here and below $\mathrm{Ei}_{2k},k\geq 1$ means the standard Eisenstein series of weight $2k$.} $\mathrm{Ei}_{2},\mathrm{Ei}_{4},\mathrm{Ei}_{6}$. They satisfy the following system of first order differential equations (ODEs)
\begin{equation}\label{eqRamanujanintro}
\begin{split}
{1\over 2\pi i}\partial_{\tau}\mathrm{Ei}_{2}&={1\over 12}(\mathrm{Ei}_{2}^{2}-\mathrm{Ei}_{4})\,,\\
{1\over 2\pi i}\partial_{\tau}\mathrm{Ei}_{4}&={1\over 3}(\mathrm{Ei}_{2}\mathrm{Ei}_{4}-\mathrm{Ei}_{6})\,,\\
{1\over 2\pi i}\partial_{\tau}\mathrm{Ei}_{6}&={1\over 2}(\mathrm{Ei}_{2}\mathrm{Ei}_{6}-\mathrm{Ei}_{4}^{2})\,.
\end{split}
\end{equation}
These identities were firstly found by Ramanujan and are termed \emph{Ramanujan identities}. In this paper, we shall also call by Ramanujan identities the set of differential equations satisfied by the generators for more general subgroups of $\mathrm{PSL}(2,\mathbb{Z})$.

We observe that these Ramanujan identities resemble the same structure of 
the WDVV equations obtained from using \eqref{theta-q} which relate derivatives of correlators to their polynomials.
This is the starting point of the present work.
In this paper, we carefully study the WDVV equations for all elliptic orbifold $\mathbb{P}^1$'s and get the following result.
\begin{thm}\label{thmequivalence}
For each of the elliptic orbifolds $\mathcal{X}_{r}$ with $r=2,3,4,6$, the system of WDVV equations satisfied by the basic genus zero orbifold GW correlation functions is equivalent to the set of Ramanujan identities for the corresponding modular group $\Gamma(r)$, the principal subgroup of level $r$ of the full modular group $\mathrm{PSL}(2,\mathbb{Z})$. 
\end{thm}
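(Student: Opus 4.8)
The plan is to realize each side of the claimed equivalence as an explicit closed system of first order differential equations and then to produce an invertible dictionary between their unknowns under which the two systems become literally identical. On the A-model side the differentiation is $\theta_q$; on the modular side it is $\partial_\tau$. First I would invoke the reconstruction result of Krawitz--Shen recalled in Section \ref{sectionreconstruction} to reduce the entire genus zero primary theory to its finite list of \emph{basic correlators}: these are the only unknowns, and every other primary correlator is a universal expression in them. So for each $r\in\{2,3,4,6\}$ the first task is simply to write down this finite list together with its low-degree initial values computed directly in GW theory.

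Next I would derive the WDVV system itself. The divisor equation \eqref{theta-q} shows that $\theta_q$ applied to any basic correlator equals that correlator with one extra $P$-insertion. Writing $P=\Delta_i\bullet\Delta_i$ and expanding the Chen--Ruan product via its definition \eqref{Chen-Ruan}, I would then apply the appropriate WDVV relation to re-express this extra insertion as a polynomial in correlators with fewer insertions, hence (again by reconstruction) in the basic correlators. The output is a \emph{closed} first order system in which $\theta_q$ of each basic correlator is given by a quadratic expression in the basic correlators, the quadratic shape being forced by the bilinear structure of WDVV. This is the precise analogue, on the A-model, of the Ramanujan system.

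On the modular side I would record, for the group $\Gamma(r)$, the generators of its ring of quasi-modular forms and the first order system they satisfy under $\partial_\tau$, namely the Ramanujan identities, which are again quadratic in the generators exactly as in the prototype \eqref{eqRamanujanintro}. The heart of the proof is then the matching step: fixing the identification $q=e^{t}$ with the correct normalization of $t$ against $2\pi i\,\tau$, so that $\theta_q$ corresponds to $\tfrac{1}{2\pi i}\partial_\tau$ up to the relevant scale, I would exhibit an explicit invertible change of variables sending each basic correlator to a definite quasi-modular form (a definite linear combination of the generators of $\Gamma(r)$). Under this change of variables one checks that the quadratic right hand sides of the two systems agree term by term, including the numerical structure constants. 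Since the dictionary is invertible, the equivalence holds in both directions.

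The main obstacle is twofold. First, the statement is not uniform in $r$: each elliptic orbifold has its own list of basic correlators, its own WDVV reductions, and its own generators of quasi-modular forms for $\Gamma(r)$, so the matching must be carried out case by case. Second, and more delicately, the real content is in pinning down the correct dictionary, that is, in recognizing which quasi-modular form each correlator \emph{is}, together with the precise normalizations of the flat coordinate and of the generators that make the structure constants of the two quadratic systems coincide. Once this dictionary is guessed correctly the remaining verification is a finite algebraic comparison, but identifying the dictionary is where the genuine insight lies; I would expect to locate it by comparing the low-degree $q$-expansions produced in the first step with the Fourier expansions of candidate generators for $\Gamma(r)$.
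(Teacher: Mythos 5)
Your strategy coincides with the paper's: reconstruct the genus zero theory from the basic correlators, close the WDVV relations into a first-order quadratic system in those correlators via the divisor equation, and match that system case by case against the Ramanujan identities, fixing the dictionary through boundary conditions and $q$-expansions. However, the pivotal step as you formulate it --- ``an invertible dictionary between their unknowns under which the two systems become literally identical'' --- would fail for $r=3,4,6$. The WDVV system has many more unknowns than the Ramanujan system: there are $6$, $12$, and $32$ basic correlators for $\mathcal{X}_3$, $\mathcal{X}_4$, $\mathcal{X}_6$ respectively, while $\widetilde{M}_{*}(\Gamma(3))=\mathbb{C}[A_3,C_3,E_3]$ has only three generators (and similarly few for $\Gamma(4)$ and $\Gamma(6)$), so no invertible change of variables between the two sets of unknowns can exist; moreover the correlators are polynomial, not linear, in the generators (e.g.\ $Z_6=\tfrac19 C_3^2$ and $Z_2=-\tfrac{1}{18}(E_3-A_3^2)$ for $\mathbb{P}^1_{3,3,3}$). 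Only for $r=2$, where the three correlators $X,Y,Z$ are honestly linear in the weight-two forms $A_4^2, B_4^2, E_4$, does your picture work verbatim. The missing idea for the other cases is the paper's intermediate reduction: some WDVV equations are purely algebraic (no $\theta_q$ appears), and these, combined with the differential ones, ODE uniqueness, and the computed initial terms, force polynomial relations among the basic correlators --- for $\mathcal{X}_3$ one gets $2Z_2=Z_1^2+Z_3$, $Z_5=Z_1Z_4$, $Z_6=Z_4^2$, the last coming from $\theta_q\log Z_6=2\,\theta_q\log Z_4$ together with the boundary conditions. Only after eliminating the redundant correlators does one obtain a minimal system in as many unknowns as there are generators (for $\mathcal{X}_3$: $Z_1,Z_4,Z_3$), and it is this reduced system that becomes literally the Ramanujan system under the rescaling $(Z_1,Z_4,Z_3)=(\tfrac13 A_3,\tfrac13 C_3,-\tfrac19 E_3)$; that reduced matching is what the theorem's ``equivalence'' means.

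Two further points your plan underestimates. First, because $\theta_q$ is singular at $q=0$, the standard existence-and-uniqueness theorem does not apply directly to these systems; the paper compensates by prescribing boundary data through order $q^1$, not merely the leading constants, before invoking uniqueness --- your appeal to ``initial values'' needs this refinement to be rigorous. Second, for $r=6$ the identification of the minimal correlators as forms for $\Gamma(6)$ is not a matter of comparing Fourier coefficients to finite order: one must prove that $A_3(Q^{1/2})$, $C_3(Q^{1/2})$, $C_3(Q^2)$, and the theta-products arising in $Z_{10}$ and $Z_{21}$ are genuinely modular of weight one for $\Gamma(6)$, which the paper does using theta-function identities, isogeny relations, and the ``modular machine'' for $\Gamma_0(N)$. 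This group-theoretic input is where much of the $r=6$ work lies, and no finite algebraic comparison of $q$-expansions can replace it.
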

The Ramanujan identities for the modular group $\Gamma(r)$ will be reviewed in Section \ref{sectionmodular}. 
The outline of the proof of this theorem given in Section \ref{sectionequivalence} is 
as follows. WDVV equations provide a powerful tool in calculating primary GW invariants at genus zero.
Specializing to the elliptic orbifolds $\mathcal{X}_{r},r=2,3,4,6$, as shown in \cite{Satake:2011, Krawitz:2011}, the whole primary genus zero theory is determined by using WDVV equations and some other axioms in GW theory including the divisor equation, string equation, the dimension formula etc.
In particular, it was proved in \cite{Krawitz:2011} that all the primary correlators are reconstructed from a subset of very few correlators, called \emph{basic correlators}. A list of all the basic correlators and the WDVV equations 
for the reconstruction algorithm
were given explicitly in the Appendix of \cite{Shen:2013thesis}. Therefore, we only need to deal with the system of first-order differential equations (with $\theta_q$ as the derivative) satisfied by these basic correlators whose initial values can be worked out easily by direct calculations in GW theory. This system is then shown to be equivalent to the set of Ramanujan identities, in the sense explained in Section \ref{sectionequivalence}. 
 \\

Moreover, we check that the boundary conditions also match.
By using the existence and uniqueness theorems to systems of ordinary differential equations,
it follows automatically (see e.g. Section \ref{secequivalence2222}) that the corresponding correlation functions are quasi-modular forms.
Furthermore, by straightforward computations on the WDVV equations for \emph{non-basic correlators}, we obtain the following theorem.
\begin{thm}\label{thmprimarygenuszeromodularity}
Let $\mathcal{X}_r$ be an elliptic orbifold $\mathbb{P}^1$, $r=2,3,4,6$. Let $\phi_{i}\in\mathfrak{B}$, $i=1,\cdots, n$ be as in \eqref{standard-basis}. Then any primary GW correlation function $\DL \phi_1,\cdots,\phi_n\DR_{0,n}$ is a quasi-modular form for $\Gamma(r)$ with weight $T+2D-2$, where $T$ is the number of twisted sectors and $D$ is the number of $P$-classes among the insertions $\phi_1, \cdots, \phi_n.$
\end{thm}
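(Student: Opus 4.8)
The plan is to combine the genus zero reconstruction theorem of \cite{Krawitz:2011} with the identification of the basic correlators as quasi-modular forms already obtained en route to Theorem \ref{thmequivalence}, and then to propagate both quasi-modularity and the weight grading through the reconstruction algorithm. The starting point is that, by \cite{Krawitz:2011} together with the explicit list of relations recorded in \cite{Shen:2013thesis}, every primary genus zero correlator $\DL\phi_1,\cdots,\phi_n\DR_{0,n}$ is expressible in terms of the finitely many basic correlators through repeated application of the WDVV equations, the divisor equation \eqref{theta-q}, the string equation, and the dimension constraint \eqref{dim-virt}. By Theorem \ref{thmequivalence} together with the matching of initial values, the basic correlators are quasi-modular forms for $\Gamma(r)$; since the quasi-modular forms form a graded ring closed under $\theta_q$, which acts as $\tfrac{1}{2\pi i}\partial_\tau$ and hence raises the weight by $2$, it suffices to check that each reconstruction step outputs a quasi-modular form and respects the claimed weight.

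To track the weight I would assign to each basis element a weight, setting $w({\bf 1})=0$, $w(P)=2$, and $w(\Delta_i^j)=1$ for every twist sector, so that the asserted weight of a genus zero correlator reads $\sum_{i=1}^n w(\phi_i)-2=T+2D-2$. The compatibility of this bookkeeping with the reconstruction moves is then verified relation by relation. The divisor equation \eqref{theta-q} adds one $P$-insertion, raising $D$ by one and the weight by $2$, which exactly matches the action of $\theta_q$ on quasi-modular forms. The Chen--Ruan rewriting $P=\Delta_i\bullet\Delta_i$ used in \eqref{Chen-Ruan} replaces a $P$ by two twist sectors, trading $2D$ for $2T$ and leaving the total weight unchanged. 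Finally, in a WDVV equation the internal summation runs over dual bases paired by $\LD-,-\RD$; since the Poincar\'e pairing is non-degenerate only between classes of complementary degree, each dual pair $(\mu,\nu)$ that actually contributes satisfies $w(\mu)+w(\nu)=2$, while the constant inverse metric carries weight $0$. A short computation then shows that, after inserting such a dual pair into two lower correlators, both sides of the associativity relation carry the common weight $w(A)+w(B)+w(C)+w(D)-2$ in the distinguished insertions $A,B,C,D$, so the grading is preserved.

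With the weight assignment shown to be compatible with all the moves, I would run the reconstruction as an induction on the number of insertions $n$, and, within a fixed $n$, on a suitable complexity of the insertion profile. The base cases are the basic correlators, which are quasi-modular of the predicted weight by Theorem \ref{thmequivalence}; the inductive step rewrites a non-basic correlator, via an appropriate WDVV equation together with \eqref{theta-q} and \eqref{Chen-Ruan}, as a polynomial in correlators that are either basic or of strictly lower complexity. Each such correlator is quasi-modular by the inductive hypothesis, the weight computation above guarantees that the resulting expression is homogeneous of weight $T+2D-2$, and closure of the ring of quasi-modular forms under addition and multiplication finishes the step.

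The main obstacle I anticipate is the weight bookkeeping itself rather than the quasi-modularity, which becomes essentially automatic once the ring structure and Theorem \ref{thmequivalence} are in hand. Concretely, one must confirm that the particular WDVV equations chosen in the reconstruction algorithm never mix correlators of different weights, that the dimension formula \eqref{dim-virt} forces the non-vanishing correlators into precisely the weight range dictated by $T+2D-2$, and that the degenerate low-$n$ cases, where the string equation can make a correlator vanish and where $0$ is admitted in every weight, do not disrupt the induction. Verifying these points amounts to the straightforward computations on the WDVV equations for the non-basic correlators alluded to above, carried out uniformly for $r=2,3,4,6$.
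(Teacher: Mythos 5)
Your proposal is correct and follows essentially the same route as the paper: quasi-modularity of the basic correlators via the WDVV/Ramanujan equivalence with matching boundary conditions (Theorem \ref{thmequivalence} and Propositions \ref{propbasicgenuszero:N=4}, \ref{propbasicgenuszero:N=3}, \ref{propbasicgenuszero:N=2}, \ref{propbasicgenuszero:N=1}), followed by the reconstruction of \cite{Krawitz:2011} and \cite{Shen:2013thesis} to reach all primary genus zero correlators. The only difference is presentational: where you formulate a uniform weight-preservation lemma for each reconstruction move (divisor equation, Chen--Ruan rewriting of $P$, dual-pair insertion in WDVV, with $w({\bf 1})+w(P)=w(\Delta_i^j)+w(\Delta_i^{a_i-j})=2$) and run an induction, the paper verifies the non-basic correlators and their weights by explicit case-by-case computation for $r=2,3,4,6$, calling the result a ``computational proof''; your dual-pair bookkeeping is essentially the same splitting argument the paper spells out only later, in the proof of the higher-genus weight formula of Theorem \ref{thmhighergenusmodularity}.
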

The weight formula in the statement is a consequence of the WDVV equations as well. Later in this paper we shall refer to the above results as \emph{genus zero modularity}.
The proofs will be given in Section \ref{sectionhighergenusmodularity}.
Schematically, we will have the following correspondence.
\begin{table}[h]
  \centering
  \caption[Correspondence between WDVV and Ramanujan]{Correspondence between WDVV and Ramanujan}
  \label{tablecorrespondence}
  \renewcommand{\arraystretch}{1.2} % single-spacing 1.2 is visually more
                                   % appealing than 1
%\begin{displaymath}
 \begin{tabular}{c|c}
divisor axiom  &  $\theta_{q}$ derivative\\
WDVV equations &   Ramanujan identities\\
GW correlation functions & quasi-modular forms
\end{tabular}
%\end{displaymath}
\end{table}
That the modular group for the GW correlation function of $\mathcal{X}_{r}$
is the principal subgroup $\Gamma(r)$ of $\mathrm{PSL}(2,\mathbb{Z})$ is easier to see from mirror symmetry \cite{Milanov:2011, Milanov:2012, Milanov:2012qu}.
In the A-model approach that we are following this is proved from the machinery of modular forms.
 
%%%%%%%%%%%%%%%%%%%%%%%%%%%%%%%%%%%%

\subsection{Tautological relations and modularity at higher genus}

WDVV equations are the simplest tautological relations over $\overline{\mathcal{M}}_{g,n}$. Other tautological relations are found, including the most general results, in \cite{Pandharipande:2015}.
Tautological relations have been very successfully used to compute GW invariants at higher genus. 
In this paper, we shall use both Getzler's relation \cite{Getzler:1997} over $\overline{\mathcal{M}}_{1,4}$ and the $g$-reduction technique \cite{Ionel:2002, Faber:2005} to prove the modularity for higher genus correlation functions.

By choosing appropriate insertions and applying Getzler's relation, we can express $\DL P\DR_{1,1}^{\mathcal{X}_r}$ in terms of primary genus zero correlators. This calculation shows $\DL P\DR_{1,1}^{\mathcal{X}_r}$ is a quasi-modular form for $\Gamma(r)$.
The $g$-reduction technique then allows us to express a higher genus correlation function as a polynomial of correlation functions at lower genera. That is, the lower genera correlation functions are the building blocks to form higher genera ones. 
As a consequence, we show that the ancestor GW correlation functions are in the polynomial ring of primary genus zero correlators, $\DL P\DR_{1,1}^{\mathcal{X}_r}$ and their $\theta_q$-derivatives which are still in the polynomial ring (according to the results in Section \ref{sectionRamanujanidentitiesandboundaryconditions} and Remark \ref{remarkpolynomiality}). 
Thus we arrive at the following conclusion whose detailed proof is given in Section \ref{sectionhighergenusmodularity}.

\begin{thm}\label{thmhighergenusmodularity}
Let $\mathcal{X}_r$ be a compact CY $1$-fold, $r=1,2,3,4,6$. Let $\phi_{i}\in\mathfrak{B}$, $i=1,\cdots, n$ be as in \eqref{standard-basis}.
Then any ancestor GW correlation function $\DL \phi_1\psi_1^{k_{1}},\cdots,\phi_n\psi_n^{k_{n}}\DR_{g,n}^{\mathcal{X}_r}$ is a quasi-modular form of $\Gamma(r)$ with weight 
$T+2D+2g-2,$ where $T$ is the number of twisted sectors and $D$ is the number of $P$-classes among $\phi_1, \cdots, \phi_n$.
\end{thm}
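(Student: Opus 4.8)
The plan is to prove the statement by induction on the genus $g$, reducing every ancestor correlator to the ``building blocks'' furnished by the genus zero theory together with the single genus one correlator $\DL P\DR_{1,1}^{\mathcal{X}_r}$. Concretely, I would introduce the polynomial ring $\mathcal{R}$ generated over $\mathbb{C}$ by all primary genus zero correlation functions and by $\DL P\DR_{1,1}^{\mathcal{X}_r}$. By Theorem \ref{thmprimarygenuszeromodularity} the genus zero generators are quasi-modular forms for $\Gamma(r)$, and Theorem \ref{thmequivalence} shows that their $\theta_q$-derivatives are again polynomials in them (this is exactly the content of the Ramanujan identities); hence $\mathcal{R}$ is a $\theta_q$-stable subring of the ring of quasi-modular forms. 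The goal is then to show that every ancestor correlator lies in $\mathcal{R}$ and is homogeneous of the asserted weight $T+2D+2g-2$, so that the dependence on the $\psi$-data disappears from the weight.

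First I would dispose of the genus zero case with $\psi$-classes and of the genus one anchor. At $g=0$ the genus zero topological recursion relations (the simplest tautological relations on $\overline{\mathcal{M}}_{0,n}$) rewrite any $\psi$-monomial as a boundary sum; by the splitting axiom this expresses $\DL \phi_1\psi_1^{k_1},\cdots,\phi_n\psi_n^{k_n}\DR_{0,n}$ as a polynomial in primary genus zero correlators, which lies in $\mathcal{R}$ by Theorem \ref{thmprimarygenuszeromodularity}, and a direct count shows the weight remains $T+2D-2$. For the anchor, I would apply Getzler's relation \cite{Getzler:1997} on $\overline{\mathcal{M}}_{1,4}$ to a carefully chosen quadruple of insertions and then use the string and divisor equations to bring the resulting genus one four-point function down to $\DL P\DR_{1,1}^{\mathcal{X}_r}$; the remaining terms are genus zero, so the relation exhibits $\DL P\DR_{1,1}^{\mathcal{X}_r}$ as a polynomial in genus zero correlators, hence as an element of $\mathcal{R}$. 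Tracking the insertions through the weight formula gives weight $0+2\cdot 1+2\cdot 1-2=2$, consistent with its $E_2$-type behaviour.

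The inductive step for $g\geq 1$ uses the $g$-reduction technique of Ionel and Faber--Pandharipande \cite{Ionel:2002, Faber:2005}: a $\psi$-$\kappa$ monomial of sufficiently high degree on $\overline{\mathcal{M}}_{g,n}$ is a pushforward from the boundary. The dimension formula \eqref{dim-virt} controls the total $\psi$-degree of a nonvanishing correlator and is what is meant to guarantee enough $\psi$-power to trigger the reduction. Applying the splitting and gluing axioms to the resulting boundary strata decomposes the correlator into a sum of products of correlators of strictly smaller genus, each carrying one half of the diagonal $\sum_{\mu}\phi_{\mu}\otimes\phi^{\mu}$ of $H$ at the new node. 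Since the pairing matches $\mathbf{1}$ with $P$ and twist sectors among themselves, each diagonal summand either inserts a $P$ on one side and a $\mathbf{1}$ on the other, or inserts one twist sector on each side; in the former case $D$ increases by one, in the latter $T$ increases by two. Feeding this into the weights of the factors, one checks that in every separating degeneration ($g_1+g_2=g$) and every non-separating degeneration (genus drop by one) the factor weights add up to exactly $T+2D+2g-2$. By the inductive hypothesis the factors lie in $\mathcal{R}$, so the correlator does too.

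The main obstacle, and where the real care has to go, is twofold. First, one must verify that the $g$-reduction can always be triggered, i.e. that after invoking the dimension constraint \eqref{dim-virt} (and, if necessary, the dilaton and string equations to adjust $\psi$-degrees) the relevant monomial always meets the degree threshold so that the recursion genuinely lowers the genus and terminates at the genus zero primaries and $\DL P\DR_{1,1}^{\mathcal{X}_r}$; in particular the genus one correlators produced along the way must themselves be reducible, which requires combining $g$-reduction with Getzler's relation and the genus one topological recursion relation. Second, the bookkeeping must be exact: the fundamental-class contributions, the diagonal insertions, and the fractional degrees of the twist sectors must all be accounted for so that the output is a genuinely \emph{homogeneous} element of $\mathcal{R}$ of weight $T+2D+2g-2$, rather than a sum of quasi-modular forms of mixed weights. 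Granting these, quasi-modularity is immediate since $\mathcal{R}$ is a $\theta_q$-stable subring of the quasi-modular forms, and the weight is pinned down by the additive count above.
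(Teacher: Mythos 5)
Your proposal is correct and follows essentially the same route as the paper: $g$-reduction (Ionel, Faber--Pandharipande) triggered by the dimension formula to decompose everything into genus zero primaries and $\DL P\DR_{1,1}^{\mathcal{X}_r}$ (computed via Getzler's relation), quasi-modularity of the building blocks from Theorems \ref{thmprimarygenuszeromodularity} and \ref{thmgenusonemodularity} together with $\theta_q$-closure from the Ramanujan identities, and the same node-splitting bookkeeping ($\mathbf{1}$--$P$ versus twist-sector pairs) for the weight $T+2D+2g-2$. The only superfluous element is your appeal to the genus one topological recursion relation: nonvanishing genus one primary correlators are forced by dimension to be $\DL P,\cdots,P\DR_{1,n}$, which reduce to $\theta_q$-derivatives of $\DL P\DR_{1,1}^{\mathcal{X}_r}$ by the divisor equation alone.
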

Note that as the theorem states, the results for the true orbifold cases also hold for the the elliptic curves, namely the $r=1$ case.
This is because the $g$-reduction technique also applies to this case.

%%%%%%%%%%%%%%%%%%%%%%%%%%%%%%%%%

\subsection{Future directions}

Modularity of the correlation functions would help simplifying significantly the calculations for the orbifold GW invariants. It also makes it easier to study the arithmetic aspects of the generating functions. For example, for the current cases, a certain version of integrality for the GW invariants can be obtained for free since this is so for the generators of the rings of quasi-modular forms.

The modularity has some other far-reaching consequences for the study of GW theory of these elliptic orbifolds. 
The originally locally defined generating series now become quasi-modular forms and hence (their non-holomorphic completions \cite{Kaneko:1995}) automatically extend to the whole moduli space.
This then might shed some light on exploring the global behavior of the correlation functions and on discussing some new aspects in enumerative geometry. 

In particular, we can make use of the modularity to analytically continue the correlation functions to other patches on the moduli space and to prove, as a toy model of, the LG/CY correspondence \cite{Krawitz:2011} for the elliptic orbifolds.
They might also give some hints in finding/checking a set of equations relating recursively correlation functions of different genera, called holomorphic anomaly equations \cite{Bershadsky:1993ta, Bershadsky:1993cx} and proposed in \cite{Milanov:2012} for the case of elliptic orbifolds.
It is hopeful that these equations can be used to compute the correlation functions in an even simpler way and to study finer structures of the sequence of correlation functions.

%%%%%%%%%%%%%%%%%%%%%%%%%%%%%%%%%%%

\subsection*{Outline of the paper}
The structure of this paper is as follows. In Section \ref{sectionmodular}
 we shall introduce some basics on modular groups and quasi-modular forms. To make the paper complete and self-contained we also list the generators for the rings of quasi-modular forms for the groups $\Gamma_{0}(N),N=1^{*},2,3,4$ which are used to construct the rings for $\Gamma(r),r=6,4,3,2$. 
The expressions for the generators of the ring of modular forms in terms of $\theta$-functions are collected in Appendix \ref{appendixthetaexpansions}.
In Section \ref{sectionequivalence}, we show the equivalence between the set of WDVV equations for $\mathcal{X}_{r},r=2,3,4,6$
and the Ramanujan identities among the generators for the ring of quasi-modular forms for $\Gamma(r)$.
The proof is based on straightforward computations.
In Section \ref{sectionhighergenusmodularity}, we prove the modularity for higher genera correlation functions for the elliptic orbifolds $\mathcal{X}_{r},r=1,2,3,4,6$. 

%%%%%%%%%%%%%%%%%%%%%%%%%%%%%%%%%%%%

\subsection*{Acknowledgement}
We thank Murad Alim, Siu-Cheong Lau, Todor Milanov, Yongbin Ruan, Emanuel Scheidegger, Hsian-Hua Tseng, Shing-Tung Yau and Don Zagier for enlightening discussions on various aspects of elliptic orbifolds and modular forms, and particularly Yongbin Ruan for very helpful advice during different stages of this work.
Part of the work was done when Y. Shen was a Project Researcher at Kavli IPMU and J. Zhou was a graduate student at the mathematics department at Harvard.
We would like to thank both institutions for providing an excellent research atmosphere. 

Y. Shen is partially supported by NSF grant DMS-1159156. J. Zhou is supported
by the Perimeter Institute for Theoretical Physics.  Research at Perimeter Institute
is  supported  by  the  Government  of  Canada  through  Industry  Canada  and  by  the
Province of Ontario through the Ministry of Economic Development and Innovation.

%%%%%%%%%%%%%%%%%
%%%%%section 2%%%%%%%%

\section{Quasi-modular forms and Ramanujan identities}

\label{sectionmodular}

%%%%%%%%%%%%%%%%%%%%%%%%%%%%%%%%%%%

\subsection{Modular groups, modular forms and quasi-modular forms}
\label{sectionmodularforms}

In this section we review the basics on modular groups and modular forms, mainly following \cite{Rankin:1977ab, Zagier:2008}.
The modular groups that are involved in our study include
the congruence
subgroups called \emph{Hecke subgroups} of
$\Gamma(1)=\mathrm{PSL}(2,\mathbb{Z})=\mathrm{SL}(2,\mathbb{Z})
/\{\pm I\}$:
\begin{equation}
\Gamma_{0}(N)=\left\{ \left.
\begin{pmatrix}
a & b  \\
c & d
\end{pmatrix}
\right\vert\, c\equiv 0\,~ \textrm{mod} \,~ N\right\}< \Gamma(1)\,.
\end{equation}
We will also consider the congruence subgroups called \emph{principal modular groups of level $N$}
\begin{equation}
\Gamma(N)=\left\{ \left.
\begin{pmatrix}
a & b  \\
c & d
\end{pmatrix}
\right\vert\,
\begin{pmatrix}
a & b  \\
c & d
\end{pmatrix}
\equiv
\begin{pmatrix}
1 & 0 \\
0 & 1
\end{pmatrix}
\,~ \textrm{mod} \,~ N\right\}< \Gamma(1)\,.
\end{equation}

Let $\mathcal{H}:=\{\tau\in\mathbb{C} \mid {\rm Im} \tau>0\}$ be the upper half plane. \emph{A modular form of weight $k\in\mathbb{Z}_{\geq 0}$ for the congruence subgroup $\Gamma$ of $\mathrm{PSL}(2,\mathbb{Z})$} is a
function $f : \mathcal{H}\rightarrow \mathbb{C}$ 
satisfying the following conditions:
\begin{itemize}
\item $\,f(\gamma \tau)=j_{\gamma}(\tau)^{k}f(\tau), \quad \forall \gamma\in \Gamma\,$, where $j$ is the \emph{automorphy factor} defined by
$$j: \Gamma \times \mathcal{H}\rightarrow
\mathbb{C},\quad \left(\gamma=\left(
\begin{array}{cc}
a & b  \\
c & d
\end{array}
\right),\tau\right)\mapsto j_{\gamma}(\tau):=(c\tau+d)\,.$$
\item $\,f$ is holomorphic on $\mathcal{H}$.
\item $\,f$ is \emph{holomorphic at the cusps}, in the sense that the function
\begin{equation}
\label{eqslashoperator} 
f|_{\gamma}: \tau\mapsto j_{\gamma}(\tau)^{-k}
f(\gamma\tau)
\end{equation}
is holomorphic at $\tau=i\infty$ for any $\gamma\in \Gamma(1)$.
\end{itemize}
The second and third conditions in the above can be equivalently
described as $f$ is holomorphic on the modular curve
$X_{\Gamma}=\Gamma\backslash \mathcal{H}^{*}$, where
$\mathcal{H}^{*}=\mathcal{H}\cup \mathbb{P}^{1}(\mathbb{Q})$, i.e.,
$\mathcal{H}\cup \mathbb{Q}\cup \{i\infty\}$. The first condition means that  $f$ can be formulated as a holomorphic section of a line bundle over $X_{\Gamma}$ whose transition function is defined by $j_{\gamma}^{k}$.

We can also define modular forms with multiplier system of integral
weight $k$ for $\Gamma$ by replacing the automorphy factor in
\eqref{eqslashoperator} by $j_\gamma(\tau)^{k} = \hat{\chi}(\gamma)(c\tau+d)^{k}$,
where $\hat{\chi}(\gamma)=(-1)^{k}\chi(\gamma)$ for $\gamma\in
\Gamma$ and $\chi:\Gamma\to\mathbb{C}^{*}$ is a multiplier system, see for example
\cite{Rankin:1977ab} for details. The space of modular forms with
multiplier system $\chi$ for $\Gamma$ forms a graded 
ring (the grading by the modular weight) and is denoted by $M_{*}(\Gamma,\chi)$. When $\chi$ is trivial,
we shall often omit it and simply write $M_{*}(\Gamma)$.

\begin{ex}
Taking the group $\Gamma$ to be the congruence subgroup $\Gamma(2)$, then the ring of even weight modular forms $M_{\mathrm{even}}(\Gamma(2))$ is generated by any two of the
three $\theta$-constants $\theta_{3}^{4}(\tau),\theta_{4}^{4}(\tau),\theta_{2}^{4}(\tau)$ which satisfies the relation
\begin{equation}\label{eqthetaidentities}
\theta_{3}^{4}(\tau)=\theta_{4}^{4}(\tau)+\theta_{2}^{4}(\tau)\,.
\end{equation}
Throughout this paper we follow the convention in \cite{Zagier:2008} for the $\theta$-constants which is reviewed in Appendix \ref{appendixthetaexpansions}.
%\qed
\end{ex}

\emph{A quasi-modular form of weight $k$ for the
group $\Gamma$} is a function $f : \mathcal{H}\rightarrow
\mathbb{P}^{1}$ satisfying the second and third conditions above,
while with the first condition replaced by the following:
\begin{itemize}
\item $\,$ There exist holomorphic functions $f_{i}, i=0,1,2,3,\dots, k-1$ such that
\begin{equation}
f(
\gamma\tau)=j_{\gamma}(\tau)^{k}f(\tau)+\sum_{i=0}^{k-1}c^{k-i}j_{\gamma}(\tau)^i
f_{i}(\tau)\,,\quad \forall \gamma=\left(
\begin{array}{cc}
a & b  \\
c & d
\end{array}
\right)\in \Gamma\,.
\end{equation}
\end{itemize}
We denote the space of quasi-modular forms for $\Gamma$ by $\widetilde{M}_{*}(\Gamma)$. It is a graded
differential ring.
According to \cite{Kaneko:1995}, one has the following
structure theorem: 
\begin{equation}\label{eqstructurethmE2}
\widetilde{M}_{*}(\Gamma)=M_{*}(\Gamma)\otimes \mathbb{C}[\mathrm{Ei}_{2}]\,,
\end{equation}
where $\mathrm{Ei}_{2}$ is the usual weight two Eisenstein series given by 
\begin{equation*}
\mathrm{Ei}_{2}(\tau)=1-24\sum_{n=1}^{\infty}\sum_{d: d|n} d \exp (2\pi i n\tau)\,.
\end{equation*}
The derivative of a modular form is in general {\bf NOT} a modular form, but a quasi-modular form, as can be easily seen from the definitions.
Hence the ring $M_{*}(\Gamma,\chi)$ is not closed under the derivative $\partial_{\tau}:={\partial\over \partial \tau}$.
However, one can show that $\widetilde{M}_{*}(\Gamma,\chi)$ is so.

\begin{ex}
For the full modular group
$\Gamma(1)=\mathrm{PSL}(2,\mathbb{Z})$, we have
$M_{*}(\Gamma(1))=\mathbb{C}[\mathrm{Ei}_{4},\mathrm{Ei}_{6}]$, $\widetilde{M}_{*}(\Gamma(1))=\mathbb{C}[\mathrm{Ei}_{2},\mathrm{Ei}_{4},\mathrm{Ei}_{6}]$, where
$\mathrm{Ei}_{4}$ and $\mathrm{Ei}_{6}$ are the Eisenstein series given by
\begin{equation*}
\mathrm{Ei}_{4}(\tau)=1+240\sum_{n=1}^{\infty}\sum_{d: d|n} d^3 \exp (2\pi i n\tau)\,,\quad \mathrm{Ei}_{6}(\tau)=1-504\sum_{n=1}^{\infty}\sum_{d: d|n} d^5 \exp (2\pi i n\tau)\,.
\end{equation*}
The differential structure of the ring  $\widetilde{M}_{*}(\Gamma(1))$ is given by the Ramanujan identities in
\eqref{eqRamanujanintro}.
For a congruence subgroup $\Gamma<\Gamma(1)$, the quasi-modular forms satisfy similar equations,
which we shall study in Section \ref{sectionringofquasimodularforms}.
%\qed
\end{ex}

%%%%%%%%%%%%%%%%%%%%%%%%%%%%%%%%%%%

\subsection{Quasi-modular forms for $\Gamma_0(N), N=1^{*},2,3,4$}
\label{sectionringofquasimodularforms}

In this section we shall first review the construction of differential rings of quasi-modular forms.
For reference and for self-containedness we list in this section the results which are scattered in the literature.
The material is largely taken from the expository part in \cite{Alim:2013eja}. 

We consider modular forms (with possibly non-trivial multiplier
systems) for the Hecke subgroups $\Gamma_{0}(N)$ with $N=2,3,4$ and
the subgroup $\Gamma_{0}(1^{*})$, which denotes the unique index $2$
normal subgroup of $\Gamma(1)=\mathrm{PSL}(2,\mathbb{Z})$ and whose (formal) Hauptmodul $\alpha$ is defined to be such that the $j$-invariant is given by $j(\alpha)={432/\alpha(1-\alpha)}$, see \cite{Maier:2009} for details on this. 
Here a Hauptmodul is a generator for the
rational function field of the genus zero modular curve.
All of them are of
genus zero in the sense that the corresponding modular curves
$X_{0}(N):=\Gamma_{0}(N)\backslash \mathcal{H}^{*}$ can be equipped with complex analytic structures as genus zero
Riemann surfaces. Each of the corresponding modular curve
$X_{\Gamma}$ has three singular points: two (equivalence classes) of
cusps\footnote{Here we use the notation $[\tau]$ to denote the $\Gamma$-equivalence class of $\tau\in \mathcal{H}^{*}$.} $[i\infty],[0]$, and the third one is a cusp or an
elliptic point, depending on the modular group. It is a quadratic
elliptic point $[\tau]=[(1+i)/2]$ for $N=2$, cubic elliptic point $[\tau]=[-i(\exp
2\pi i/3)/\sqrt{3}]$ for $N=3$ and $N=1^{*}$, and a cusp $[\tau]=[1/2]$ for $N=4$.
For a review of these facts, see for instance \cite{Rankin:1977ab}.

We can choose a particular Hauptmodul
$\alpha(\tau)$ for the corresponding modular group such that the two
cusps are given by $\alpha=0,1$ respectively, and the third singular point is
$\alpha= \infty$. It is given by 
\begin{equation}
\alpha(\tau)=C_{N}^{r}(\tau)/A_{N}^{r}(\tau)\,,
\end{equation}
where $r=6,4,3,2$ for the cases $N=1^{*},2,3,4$
respectively: the functions 
\begin{equation*}
A_{N}(\tau),\quad C_{N}(\tau)=\alpha(\tau)^{1\over
r}A(\tau), \quad B_{N}(\tau):=(1-\alpha(\tau))^{1\over r}A_{N}(\tau)
\end{equation*}
are given in
Table \ref{tableetaexpansions}. 
\begin{table}[h]
  \centering
  \caption[Expressions of $A_{N},B_{N},C_{N}$ for $\Gamma_{0}(N), N=1^{*},2,3,4$]{Expressions of $A_{N},B_{N},C_{N}$ for $\Gamma_{0}(N), N=1^{*},2,3,4$}
  \label{tableetaexpansions}
  \renewcommand{\arraystretch}{1.2} % single-spacing 1.2 is visually more
                                   % appealing than 1
%\begin{displaymath}
 \begin{tabular}{c|ccc}
$N$&$A_{N}$&$B_{N}$&$C_{N}$\\
\hline
$1^{*}$&$E_{4}(\tau)^{1\over 4}$&$({E_{4}(\tau)^{3\over 2}+E_{6}(\tau)\over 2})^{1\over 6}$&$({E_{4}(\tau)^{3\over 2}-E_{6}(\tau)\over 2})^{1\over 6}$\\
$2$&${(2^{6}\eta(2\tau)^{24}+\eta(\tau)^{24} )^{1\over 4} \over \eta(\tau)^2\eta(2\tau)^2}$&${\eta(\tau)^{4}\over \eta(2\tau)^{2}}$&$2^{3 \over 2}{\eta(2\tau)^4\over \eta(\tau)^2}$ \\
$3$&${(3^{3}\eta(3\tau)^{12}+\eta(\tau)^{12} )^{1\over 3} \over \eta(\tau)\eta(3\tau)}$&${\eta(\tau)^{3}\over \eta(3\tau)}$&$3{\eta(3\tau)^3\over \eta(\tau)}$ \\
$4$&${(2^{4}\eta(4\tau)^{8}+\eta(\tau)^{8} )^{1\over 2} \over
\eta(2\tau)^2}=
{\eta(2\tau)^{10}\over\eta(\tau)^{4}\eta(4\tau)^{4}}$&${\eta(\tau)^{4}\over
\eta(2\tau)^2}$&$2^2{\eta(4\tau)^4\over \eta(2\tau)^2}$
\end{tabular}
%\end{displaymath}
\end{table}
Their expressions in terms of $\theta$-functions are listed in Appendix \ref{appendixthetaexpansions} from where one can easily see that the coefficients in the $q$-expansions are integral.
By definition, one has
\begin{equation}\label{ABCrelation}
A_{N}^{r}(\tau)=B_{N}^{r}(\tau)+C_{N}^{r}(\tau)\,.
\end{equation}
See \cite{Borwein:1991,
Berndt:1995} and also \cite{Maier:2009, Maier:2011} for a more detailed review on the modular forms $A_{N},B_{N},C_{N}$.
We shall drop the subscript $N$ in the notations when it is clear from the surrounding texts.

Moreover, for $N=2,3,4$ ($N=1^{*}$ case is exceptional) one has
\begin{equation}\label{eqGaussSchwarz}
A_{N}^{2}(\tau)
%=\partial_{\tau}\log {C(\tau)^{r}\over
%B(\tau)^{r}}
={1\over N-1}(N\mathrm{Ei}_{2}(N\tau)-\mathrm{Ei}_{2}(\tau))\,.
\end{equation}
Another useful observation is the following
\begin{equation}\label{C2relation}
B_{2}(\tau)=B_{4}(\tau),\quad C_{2}^{2}(\tau)=2A_{4}(\tau)C_{4}(\tau)\,.
\end{equation}
Again we denote the graded ring of modular forms with character $\chi$ for
$\Gamma$ by $M_{*}(\Gamma,\chi)$ and correspondingly the graded ring of
even weight modular forms by $M_{\mathrm{even}}(\Gamma,\chi)$, then we have the
following results
\begin{equation}\label{eqringofmodularformsgamma0N}
\begin{split}
M_{\mathrm{even}}(\Gamma_{0}(2))&=\mathbb{C}[A_{2}^{2},B_{2}^{4}]\,,\\
M_{*}(\Gamma_{0}(3),\chi_{-3})&=\mathbb{C}[A_{3},B_{3}^{3}]\,, \\
M_{*}(\Gamma_{0}(4),\chi_{-4})&=\mathbb{C}[A_{4},B_{4}^{2}]\,.
\end{split}
\end{equation}
Here $\chi_{-3}(d)=\left(\frac{-3}{d}\right)$ is the Legendre symbol
and it gives a non-trivial Dirichlet character for the modular
forms. Similarly, $\chi_{-4}(d)=\left(\frac{-4}{d}\right)$. Furthermore, we have
\begin{equation}\label{eqringofmodularformsgammaN}
\begin{split}
M_{\mathrm{even}}(\Gamma(2))&=\mathbb{C}[\theta_{3}^{4},\theta_{4}^{4}, \theta_{2}^{4}]/(\theta_{3}^{4}=\theta_{4}^{4}+\theta_{2}^{4})\,,\\
M_{*}(\Gamma(3))&=\mathbb{C}[A_{3},C_{3}]\,,\\
M_{*}(\Gamma(4))&=\mathbb{C}[A_{4},C_{4},C_{2}]/(C_{2}^{2}=2A_{4}C_{4})\,.
\end{split}
\end{equation}
See \cite{Bannai:2001, Sebbar:2002, Maier:2011} and references therein for
details of these results.

We define further the quantity
\begin{equation}\label{defofE}
E_{N}(\tau)={1\over 2\pi i}\partial_{\tau}\log \left( C_{N}^{r}(\tau)B_{N}^{r}(\tau)\right)\,.
\end{equation}
It follows from the expressions for $B_{N},C_{N}$ in Table \ref{tableetaexpansions} that
\begin{eqnarray}
E_{N}(\tau)&=&{N\mathrm{Ei}_{2}(N\tau)+\mathrm{Ei}_{2}(\tau)\over N+1}\,, \quad  N=1^{*},2,3\,,\nonumber\\
E_{N}(\tau)&=&{\mathrm{Ei}_{2}(\tau)-2\mathrm{Ei}_{2}(2\tau)+4\mathrm{Ei}_{2}(4\tau)\over 3}\,, \quad  N=4\,.\label{eqEintermsofE2}
\end{eqnarray}
According to the transformations of $B_{N}^{r},C_{N}^{r}$ under the group $\Gamma_{0}(N)$, we can see that $E_{N}$ is a quasi-modular form for $\Gamma_{0}(N)$.
We can then replace the Eisenstein series in \eqref{eqstructurethmE2} by the quasi-modular form $E_{N}$. The statement still holds. That is, we have
\begin{equation}\label{eqstructurethm}
\widetilde{M}_{*}(\Gamma_{0}(N),\chi)=M_{*}(\Gamma_{0}(N),\chi)\otimes \mathbb{C}[\mathrm{Ei}_{2}]\cong M_{*}(\Gamma_{0}(N),\chi)\otimes \mathbb{C}[E_{N}]\,.
\end{equation}

%%%%%%%%%%%%%%%%%%%%%%%%%%%%%%%%%%%%

\subsection{Ramanujan identities}
\label{sectionRamanujanidentitiesandboundaryconditions}

The ring generated by $A_{N},B_{N},C_{N},E_{N}$ serves as the largest ring when considering rings of quasi-modular forms for $\Gamma_{0}(N)$ and for $\Gamma(N)$ in our discussions.  
It is closed
under the derivative $\partial_{\tau}$. The differential structure is given by the following equations which can be easily derived. See e.g., \cite{Alim:2013eja, Zhou:2013hpa} and references therein for more details.
\begin{prop}\label{propRamanujanforall}
For each of the modular groups $\Gamma_{0}(N), N=1^{*},2,3,4$ with $r=6,4,3,2$ respectively, the
following identities hold:
\begin{eqnarray*}
{1\over 2\pi i}\partial_{\tau}A_{N}&=&{1\over 2r}A_{N}(E_{N}+{C_{N}^{r}-B_{N}^{r}\over A_{N}^{r}}A_{N}^{2})\,,\\
{1\over 2\pi i}\partial_{\tau}B_{N}&=&{1\over 2r}B_{N}(E_{N}-A_{N}^{2})\,,\\
{1\over 2\pi i}\partial_{\tau}C_{N}&=&{1\over 2r}C_{N}(E_{N}+A_{N}^{2})\,,\\
{1\over 2\pi i}\partial_{\tau}E_{N}&=&{1\over 2r}(E_{N}^{2}-A_{N}^{4})\,.
\end{eqnarray*}
\end{prop}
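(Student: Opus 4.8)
The plan is to prove Proposition~\ref{propRamanujanforall} by direct computation, exploiting the fact that $A_N, B_N, C_N$ are explicit $\eta$-quotients (or expressions in Eisenstein series for $N=1^*$) as recorded in Table~\ref{tableetaexpansions}, together with the definition \eqref{defofE} of $E_N$. The key structural input is that the logarithmic derivative $\frac{1}{2\pi i}\partial_\tau \log f$ of any $\eta$-quotient is a weight-two quasi-modular form expressible through $E_2$, since $\frac{1}{2\pi i}\partial_\tau \log \eta(\tau) = \frac{1}{24}E_2(\tau)$. Thus the strategy reduces each of the four identities to an equality between weight-two (respectively weight-$(r+2)$ after clearing the factor $A_N$) quasi-modular forms, and such equalities can be checked within the finitely generated ring $\mathbb{C}[A_N, B_N, C_N, E_N]$.

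First I would establish the equation for $E_N$ itself, as it is the cleanest and anchors the others. By definition \eqref{defofE}, $E_N = \frac{1}{2\pi i}\partial_\tau \log(C_N^r B_N^r)$, so $\frac{1}{2\pi i}\partial_\tau E_N$ is a logarithmic second derivative; using the expressions \eqref{eqEintermsofE2} of $E_N$ in terms of the Eisenstein series $Ei_2$ at various arguments, together with the classical Ramanujan identity $\frac{1}{2\pi i}\partial_\tau Ei_2 = \frac{1}{12}(Ei_2^2 - Ei_4)$, one computes $\frac{1}{2\pi i}\partial_\tau E_N$ explicitly and matches it against $\frac{1}{2r}(E_N^2 - A_N^4)$, using the relation \eqref{eqGaussSchwarz} expressing $A_N^2$ through $E_2$ at scaled arguments. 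Next I would treat $B_N$ and $C_N$: since $\frac{1}{2\pi i}\partial_\tau \log B_N^r$ and $\frac{1}{2\pi i}\partial_\tau \log C_N^r$ are again weight-two quasi-modular forms, and their sum is $E_N$ by construction, it suffices to identify their \emph{difference}. I would show
\begin{equation*}
\frac{1}{2\pi i}\partial_\tau \log\!\left(\frac{C_N^r}{B_N^r}\right) = A_N^2\,,
\end{equation*}
by comparing the $\eta$-quotient for $C_N/B_N$ with the formula \eqref{eqGaussSchwarz} for $A_N^2$; combining this with $\frac{1}{2\pi i}\partial_\tau\log(C_N^r B_N^r)=E_N$ and dividing by $r$ yields precisely the stated equations for $\partial_\tau B_N$ and $\partial_\tau C_N$.

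Finally, the identity for $A_N$ follows from the other three together with the defining relation \eqref{ABCrelation}, namely $A_N^r = B_N^r + C_N^r$. Differentiating this relation gives $r A_N^{r-1}\,\partial_\tau A_N = r B_N^{r-1}\,\partial_\tau B_N + r C_N^{r-1}\,\partial_\tau C_N$; substituting the already-established formulas for $\partial_\tau B_N$ and $\partial_\tau C_N$ and simplifying using $A_N^r = B_N^r + C_N^r$ once more produces $\frac{1}{2\pi i}\partial_\tau A_N = \frac{1}{2r}A_N\bigl(E_N + \frac{C_N^r - B_N^r}{A_N^r}A_N^2\bigr)$, as claimed. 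I expect the main obstacle to be bookkeeping rather than conceptual: keeping the various scaling arguments ($\tau$, $2\tau$, $4\tau$) in the Eisenstein-series expressions \eqref{eqEintermsofE2} and \eqref{eqGaussSchwarz} consistent, and verifying that the weight-two quasi-modular relations hold identically (not merely to high $q$-order) rather than accidentally. The $N=1^*$ case is genuinely exceptional because \eqref{eqGaussSchwarz} fails there, so for it I would instead work directly from the Table~\ref{tableetaexpansions} entries $A=Ei_4^{1/4}$, $B^6=\frac{1}{2}(Ei_4^{3/2}+Ei_6)$, $C^6=\frac{1}{2}(Ei_4^{3/2}-Ei_6)$ and the classical Ramanujan identities \eqref{eqRamanujanintro}, which makes the verification entirely a computation in $\mathbb{C}[Ei_2,Ei_4,Ei_6]$.
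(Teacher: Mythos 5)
Your proposal is correct, and it is worth noting that the paper itself gives no proof of Proposition~\ref{propRamanujanforall}: it states that the identities ``can be easily derived'' and defers to \cite{Alim:2013eja, Zhou:2013hpa}. Your argument supplies exactly the derivation being deferred, and its skeleton is the standard one. Two of your reductions make the bookkeeping almost trivial: first, by the definition \eqref{defofE} the \emph{sum} of the claimed $B_N$- and $C_N$-equations holds automatically, so both are jointly equivalent to the single identity $\tfrac{1}{2\pi i}\partial_\tau\log\left(C_N^r/B_N^r\right)=A_N^2$, which for $N=2,3,4$ follows in one line from the $\eta$-quotients of Table~\ref{tableetaexpansions} together with \eqref{eqGaussSchwarz} (for instance $C_4^2/B_4^2=2^4\,\eta(4\tau)^8/\eta(\tau)^8$, whose logarithmic derivative is $\tfrac13\left(4Ei_2(4\tau)-Ei_2(\tau)\right)=A_4^2$, and similarly $C_3^3/B_3^3=27\,\eta(3\tau)^{12}/\eta(\tau)^{12}$, $C_2^4/B_2^4=2^6\,\eta(2\tau)^{24}/\eta(\tau)^{24}$); second, differentiating \eqref{ABCrelation} and substituting these two equations yields the $A_N$-equation by pure algebra, exactly as you say. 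The $N=1^*$ case also works as you describe: since $B^6C^6=\tfrac14(Ei_4^3-Ei_6^2)=432\,\eta^{24}$, one gets $E_{1^*}=Ei_2$, and all four equations become short computations in $\mathbb{C}[Ei_2,Ei_4,Ei_6]$ via \eqref{eqRamanujanintro}. The one step you leave under-specified is the $E_N$-equation for $N=2,3,4$: after applying \eqref{eqRamanujanintro} to \eqref{eqEintermsofE2}, the relation \eqref{eqGaussSchwarz} only disposes of the $Ei_2$-terms, and you are still left with weight-four combinations such as $Ei_4(\tau)+N^2Ei_4(N\tau)$ that must be re-expressed through $A_N,B_N,C_N$. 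This is not a genuine gap, but your phrase ``matches it against'' is precisely where an extra input is consumed: both sides of the desired identity are quasi-modular of weight four for $\Gamma_0(N)$, a finite-dimensional space, so comparing finitely many $q$-coefficients is a rigorous proof (for $N=4$, for example, the relevant difference of $Ei_2$-quadratics collapses to a polynomial in the honestly modular forms $A_2^2(\tau)$ and $A_2^2(2\tau)$, making the finite check even cleaner); you should state this finite-dimensionality argument, or invoke the classical weight-four identities, explicitly.
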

The asymptotic behavior of the quasi-modular forms will be useful later so we list them here ($Q=\exp (2\pi i \tau)$)
\begin{equation}\label{boundaryconditionsforGamma0N}
\begin{split}
A_{N}(Q)&=1+\mathcal{O}(Q)\,,\\
B_{N}(Q)&=1+\mathcal{O}(Q)\,,\\
C_{N}(Q)&=\kappa_{N}^{1\over r}Q^{1\over r}(1+\mathcal{O}(Q))\,,\\
E_{N}(Q)&=1+\mathcal{O}(Q)\,.
\end{split}
\end{equation}
The numbers $\kappa_{N}$, as well as the relation between $N$ and $r$ are given
in
Table \ref{tablearithmeticvalues}.
\begin{table}[h]
  %\centering
  \caption[Arithmetic numbers for $N=1^{*},2,3,4$]{Arithmetic numbers for $N=1^{*},2,3,4$}
  \label{tablearithmeticvalues}
  \renewcommand{\arraystretch}{1.2} % single-spacing 1.2 is visually more
                                % appealing than 1
\begin{displaymath}
\begin{tabular}{c|cccc}
 		 $N$ & 4 & 3 & 2 & $1^{*}$ \\
\hline
    $r$ & 2 & 3 & 4  & 6\\
    $\kappa_{N}$  & 16 & 27 & 64  & 432 \\
  \end{tabular}
	\end{displaymath}
\end{table}
The number $r$ is related to the signature $\nu$ (that is, its index as a subgroup of the full modular group) of the modular group $\Gamma_{0}(N)$ by $r=12/\nu$.

The equations in Proposition \ref{propRamanujanforall} are similar to the Ramanujan identities satisfied by the generators of the ring of quasi-modular forms for the full modular group $\Gamma(1)$, which were discussed earlier in \eqref{eqRamanujanintro}.
Hence we call these the \emph{Ramanujan identities for} $\Gamma_{0}(N),N=1^{*},2,3,4$.
Due to the relation in \eqref{ABCrelation} we can see that the system is actually redundant.

\begin{rem}\label{remellipticorbifoldandsingularity}
As mentioned earlier in Introduction, the mirror of the elliptic orbifolds $\mathcal{X}_{r}, r=3,4,6$ are, see  \cite{Milanov:2011}, the so-called simple elliptic singularities $E_{n}^{1,1},n=6,7,8$ \cite{Saito:1974}.
The corresponding elliptic curve families, up to base changes, are parametrized by the modular curves $\Gamma_{0}(N)\backslash \mathcal{H}^{*}$ with $N=3,2,1^{*}$, respectively.
On the other hand, for the $N=4$ case, the corresponding elliptic curve family is realized as a complete intersection in $\mathbb{P}^{3}$ and is parametrized by the modular curve $\Gamma_{0}(4)$.
See \cite{Milanov:2012qu, Alim:2013eja, Milanov:2014, Zhou:2014thesis} for more details on the arithmetic aspects of these families and their applications in GW theory and mirror symmetry. 
For the discussions in this paper, we are not going to use the results for the modular group $\Gamma_{0}(1^{*})$.
The reason that we include them here is for comparison and for the purpose of relating our results on modularity to those obtained from B-model considerations. This will be addressed further in Section \ref{sectonmodularitysummarized}.
%\qed
\end{rem}

%%%%%%%%%%%%%%%%%%%%%%%%%%%%%%%%%%%%

\subsection{Relations among modular forms induced by isogenies}
\label{sectioncongruenceforall}

In this work, when we talk about modularity, we shall switch to the variable $\tau$ or $Q$ with the $j$-invariant for an elliptic curve given by
$j(Q)={1/ Q}+744+\cdots$.
The previously defined parameter $q=e^t$ in orbifold GW theory is related to the parameter $Q=\exp (2\pi i\tau)$
by $q=Q^{1\over r}$, see \cite{Milanov:2011}. To express the correlation functions which are $q$-series in terms of $Q$, we need to apply the transformation
$ \tau\mapsto t/(2\pi i)=\tau/r$ which is called the \emph{$r$-isogeny}.

In this section, we summarize the action of the $r$-isogeny on the quasi-modular forms 
for the cases with $(N,r)=(4,2), (3,3), (2,4)$.
These results will be needed later in Section \ref{sectionequivalence}. The $r=6$ case is exceptional and we will deal with it separately later in Section \ref{sectionr=6}.

\paragraph{$(N,r)=(4,2)$.}

We have the following quadratic identities:
\begin{equation}
\theta_{3}^{2}(2\tau)={1\over 2}(\theta_{3}^{2}(\tau)+\theta_{4}^{2}(\tau))\,,\quad 
\theta_{4}^{2}(2\tau)=\theta_{3}(\tau)\theta_{4}(\tau)\,\label{relation:N4r22}.
\end{equation}
This implies that 
\begin{equation*}
A_{4}(Q^{2})={1\over 2}(A_{4}(Q)+B_{4}(Q))
\,,B_{4}(Q^{2})=A_{4}(Q)^{1\over 2}B_{4}(Q)^{1\over 2}
\,, C_{4}(Q^{2})={1\over 2}(A_{4}(Q)-B_{4}(Q))\,.
\end{equation*}
By using the definition of $E_{N}$ for $N=4$ in \eqref{defofE} and the relation \eqref{ABCrelation}, we can then express 
$A_{4}(Q^{1\over 2}),B_{4}(Q^{1\over 2}),C_{4}(Q^{1\over 2}),E_{4}(Q^{1\over 2})$ in terms of $A_{4}(Q),B_{4}(Q),C_{4}(Q),E_{4}(Q)$ as follows
\begin{equation}\label{eqisogenyrelation:N=4}
\left\{
\begin{array}{ll}
A_4(Q^{1\over 2})&=A_4(Q)+C_4(Q))\,,\\
B_4(Q^{1\over 2})&=A_4(Q)-C_4(Q))\,,\\
C_4(Q^{1\over 2})&=2A_4^{1\over 2}(Q)C_4^{1\over 2}(Q)\,,\\
E_4(Q^{1\over 2})&=2E_4(Q)-A_4^{2}(Q)-2A_4(Q)C_4(Q)+C_4^{2}(Q)\,.
\end{array}
\right.
\end{equation}

\paragraph{$(N,r)=(3,3)$.}

To derive the relations among $A_3(Q),B_3(Q),C_3(Q),E_3(Q)$ and $A_3(Q^{1\over 3})$, $B_3(Q^{1\over 3})$, $C_3(Q^{1\over 3})$, $E_3(Q^{1\over 3})$,
we use the relation in \cite{Borwein:1991, Borwein:1994, Berndt:1995}
\begin{equation}
A_3(Q^{1\over 3})=A_3(Q)+2C_3(Q)\,,\quad B_3(Q^{1\over 3})=A_3(Q)-C_3(Q)\,.
\end{equation}
From this one can then derive the relation among $E_3(Q^{1\over 3})$ and $A_3(Q), B_3(Q), C_3(Q), E_3(Q)$.

\paragraph{$(N,r)=(2,4)$.}

Using the $\theta$-expressions for $N=2$ case in Appendix \ref{appendixthetaexpansions} we get
(see \cite{Maier:2011})
\begin{equation}
A_{2}^{2}(Q^{1\over 2})=A_{2}^{2}(Q)+3C_{2}^{2}(Q)\,,\quad B_{2}^{2}(Q^{1\over 2})=A_{2}^{2}(Q)-C_{2}^{2}(Q)\,.
\end{equation}
Recall that the modular forms for $\Gamma_{0}(2)$ and $\Gamma_{0}(4)$ have the following relations from \eqref{C2relation}
\begin{equation*}
B_{2}(Q)=B_{4}(Q)\,,\quad C_{2}(Q)=2^{-{1\over 2}}C_{4}(Q^{1\over 2})=(2A_{4}(Q)C_{4}(Q))^{1\over 2}\,.
\end{equation*}
Hence we can use the quadratic relations for modular forms of $\Gamma_{0}(4)$ to derive those for $\Gamma_{0}(2)$.
Iterating the quadratic relations will give the quartic relations.

\begin{rem}
\label{remarkpolynomiality}
We note that for $N=3,4$ cases, the equations in Proposition \ref{propRamanujanforall} only involve positive powers of the generators.
However, when $N=1^{*},2$ this is not the case. 
For later applications in Section \ref{sectionequivalence} we will 
need to find generators for the ring of the quasi-modular forms for $\Gamma(r)$ with $r=2,3,4,6$.
According to \eqref{eqringofmodularformsgammaN}, the generators for the $r=3,4$ cases can be constructed from those for $\Gamma_{0}(3),\Gamma_{0}(4)$.
The $\Gamma(2)$ case can be related to the $\Gamma_{0}(4)$ case by using the 2-isogeny between the two modular groups, and the above results on 2-isogeny on modular forms for $(N,r)=(4,2)$ case.
The $r=6$ case turns out to be reduced to the $r=2,3$ cases. This will be discussed in Section \ref{sectionr=6}.
Therefore, for all cases, the $\partial_{\tau}$ derivatives of the generators will be in the polynomial ring, that is, no negative power will appear upon taking derivatives.
%\qed
\end{rem}

%%%%%%%%%%%%%%%%%%%%%%%%%%%%%%%%%%%%
%%%%%%%%%%%%%%%%section 3%%%%%%%%%%%%%%%%%%%

\section{From WDVV equations to Ramanujan identities}
\label{sectionequivalence}

The genus zero primary potential in the GW theory of the elliptic orbifolds is fully studied in \cite{Satake:2011, Krawitz:2011} using WDVV equations. 
We now discuss the relation between WDVV equations and Ramanujan identities. As a result, we will prove Theorem \ref{thmequivalence} and Theorem \ref{thmprimarygenuszeromodularity}.
Recall that for each case the parameter $q=e^t$ in the GW theory of $\mathcal{X}_{r}$ is related to the parameter $Q=\exp (2\pi i\tau)$
by $q=Q^{1\over r}$.

%%%%%%%%%%%%%%%%%%%%%%%%%%%%%%%%%%%%

\subsection{$\mathcal{X}_{2}=\mathbb{P}^1_{2,2,2,2}$}

\subsubsection{WDVV equations for $\mathbb{P}^1_{2,2,2,2}$}
As shown in \cite{Satake:2011}, using Kawasaki's orbifold Riemann-Roch formula \cite{Kawasaki:1979}, the string equation and the divisor equation, it is easy to see that the genus zero primary potential is
\begin{equation}\label{eqg0N2}
F_{0}^{\mathcal{X}_2}=\mathrm{cubic~ terms}+ X(q)t_1t_2t_3t_4 + \frac{Y(q)}{4!} (\sum_{i=1}^4t_i^4)+ \frac{Z(q)}{2!2!} \sum_{1\leq i,j\leq 4,\, i\neq j}(t_i^2+t_j^2)\,,
\end{equation}
where the parameter $t_{i}$ is the coordinate for vector space spanned by the unit ${\bf 1}\in H$ when $i=0$ and that by the twisted sector
$\Delta_{i}$ when $i=1,2,3,4$.
The coefficients $X(q),Y(q)$ and $Z(q)$ are the GW correlation functions
\begin{equation}\label{GW-inv2}
\left\{
\begin{array}{ll}
X(q)&:=\DL \Delta_1,\Delta_2,\Delta_3,\Delta_4\DR_{0,4}(q),\\
Y(q)&:=\DL \Delta_i,\Delta_i,\Delta_i,\Delta_i\DR_{0,4}(q), \quad i\in\{1,2,3,4\}\\
Z(q)&:=\DL \Delta_i,\Delta_i,\Delta_j,\Delta_j\DR_{0,4}(q), \quad \{i,j\}\subset\{1,2,3,4\}.
\end{array}
\right.
\end{equation}
Note that due to symmetry, $Y(q)$ and $Z(q)$ are independent of the choice of $i$ and $j$.

To derive the WDVV equations we need, let us first consider the dual graphs for the boundary cycles in $\overline{\mathcal{M}}_{0,n}$. A vertex represents a genus zero component, an edge connecting two vertices represents a node, and a tail (or half-edge) represents a marked point on the component represented by the corresponding vertex.
The three boundary cycles in $\overline{\mathcal{M}}_{0,4}$ represented by the graphs below are homologous:
\begin{center}
\begin{picture}(100,20)

	% Lines
	\put(0,9){\line(-3,4){5}}
    \put(0,9){\line(-3,-4){5}}
	\put(0,9){\line(1,0){10}}
	
    \put(10,9){\line(3,4){5}}
    \put(10,9){\line(3,-4){5}}

    \put(-8,2){2}
    \put(-8,14){1}
    \put(16,2){4}
    \put(16,14){3}
	%\multiput(20,-.5)(0,1){2}{\line(1,0){10}}
	%\put(30,0){\line(1,0){10}}

\put(25,8){$=$}

\put(75,8){$=$}

% Lines
	\put(50,9){\line(-3,4){5}}
    \put(50,9){\line(-3,-4){5}}
	\put(50,9){\line(1,0){10}}
	
    \put(60,9){\line(3,4){5}}
    \put(60,9){\line(3,-4){5}}

    \put(42,2){4}
    \put(42,14){1}
    \put(66,2){3}
    \put(66,14){2}

% Lines
	\put(100,9){\line(-3,4){5}}
    \put(100,9){\line(-3,-4){5}}
	\put(100,9){\line(1,0){10}}
	
    \put(110,9){\line(3,4){5}}
    \put(110,9){\line(3,-4){5}}

    \put(92,2){3}
    \put(92,14){1}
    \put(116,2){4}
    \put(116,14){2}

\end{picture}
\end{center}
The WDVV equations are then obtained by pulling back the cohomological relations corresponding to the above equalities to $\overline{\mathcal{M}}_{0,n}$ by $\pi_{4,n}: \overline{\mathcal{M}}_{0,n}\to\overline{\mathcal{M}}_{0,4}$ which forgets the last $n-4$ marked points (followed by stabilization) and then integrating the GW classes created by inserting some cohomology classes at these marked points.
Let us give one example to show how it works. We integrate the GW class 
$\Lambda_{0,6}^{\mathcal{X}_2}(\Delta_1,\Delta_2,\Delta_4,\Delta_4, \Delta_3, \Delta_4)$ on the boundary strata over $\overline{\mathcal{M}}_{0,6}$, which are the pull-backs of the first homological equivalence relation in $\overline{\mathcal{M}}_{0,4}$ via $\pi_{4,6}: \overline{\mathcal{M}}_{0,6}\to\overline{\mathcal{M}}_{0,4}$.
The integration involving the first cycle gives the non-vanishing terms 
\begin{equation*}
\frac{1}{2}\DL \Delta_1,\Delta_2,\Delta_3,\Delta_4,P\DR_{0,5}+2XY+2XZ\,,
\end{equation*}
The coefficients come from $\DL {\bf 1}, \Delta_4,\Delta_4\DR_{0,3}=1/2$ and $\eta^{(P, {\bf 1})}=1, \eta^{(\Delta_i,\Delta_i)}=2$, where the inverse matrix of the Poincar\'e pairing is denoted by $\eta^{(-,-)}$.
The corresponding decorated dual graphs are 
\begin{center}
\begin{picture}(100,20)

	% Lines
	\put(0,9){\line(-3,4){5}}
    \put(0,9){\line(-3,-4){5}}
	\put(0,9){\line(1,0){10}}

    \put(0,9){\line(-3,1){7}}
    \put(0,9){\line(-3,-1){7}}
	
    \put(10,9){\line(3,4){5}}
    \put(10,9){\line(3,-4){5}}

    \put(-10,2){$\Delta_2$}
    \put(-10,14){$\Delta_1$}
    \put(-12,6){$\Delta_4$}
    \put(-12,10){$\Delta_3$}
    \put(16,2){$\Delta_4$}
    \put(16,14){$\Delta_4$}
    \put(1,10){$P;$}
    \put(7,10){$\bf 1$}

    \put(40,10){$\Delta_4;$}
    \put(46,10){$\Delta_4$}

    \put(90,10){$\Delta_3;$}
    \put(96,10){$\Delta_3$}

\put(22,8){$+$}

% Lines
	\put(40,9){\line(-3,4){5}}
    \put(40,9){\line(-3,-4){5}}
	\put(40,9){\line(1,0){10}}

    \put(40,9){\line(-3,0){7}}
    \put(50,9){\line(3,0){7}}

    \put(50,9){\line(3,4){5}}
    \put(50,9){\line(3,-4){5}}

    \put(30,2){$\Delta_2$}
    \put(30,14){$\Delta_1$}

    \put(56,2){$\Delta_4$}
    \put(56,14){$\Delta_4$}

    \put(28,8){$\Delta_3$}
    \put(58,8){$\Delta_4$}

\put(67,8){$+$}

% Lines
	\put(90,9){\line(-3,4){5}}
    \put(90,9){\line(-3,-4){5}}
	\put(90,9){\line(1,0){10}}

    \put(90,9){\line(-3,0){7}}
    \put(100,9){\line(3,0){7}}

    \put(100,9){\line(3,4){5}}
    \put(100,9){\line(3,-4){5}}

    \put(80,2){$\Delta_2$}
    \put(80,14){$\Delta_1$}

    \put(106,2){$\Delta_4$}
    \put(106,14){$\Delta_4$}

    \put(78,8){$\Delta_4$}
    \put(108,8){$\Delta_3$}
\end{picture}
\end{center}
We remark that the decorated dual graph where both $\Delta_3$ and $\Delta_4$ appear on the component with two $\Delta_4$-insertions has no contribution, since any term of the form $\DL-,\Delta_3,\Delta_4,\Delta_4,\Delta_4\DR_{0,5}$ vanishes by Kawasaki's Riemann-Roch formula.
Similarly, the integration on the second cycle gives non-vanishing terms
\begin{equation*}
2XZ+2XZ\,,
\end{equation*}
with decorated dual graphs 
\begin{center}
\begin{picture}(150,20)

% Lines
	\put(40,9){\line(-3,4){5}}
    \put(40,9){\line(-3,-4){5}}
	\put(40,9){\line(1,0){10}}

    \put(40,9){\line(-3,0){7}}
    \put(50,9){\line(3,0){7}}

    \put(50,9){\line(3,4){5}}
    \put(50,9){\line(3,-4){5}}

    \put(30,2){$\Delta_4$}
    \put(30,14){$\Delta_1$}

    \put(56,2){$\Delta_4$}
    \put(56,14){$\Delta_2$}

    \put(28,8){$\Delta_3$}
    \put(58,8){$\Delta_4$}

\put(67,8){$+$}

    \put(40,10){$\Delta_2;$}
    \put(46,10){$\Delta_2$}

    \put(90,10){$\Delta_1;$}
    \put(96,10){$\Delta_1$}

% Lines
	\put(90,9){\line(-3,4){5}}
    \put(90,9){\line(-3,-4){5}}
	\put(90,9){\line(1,0){10}}

    \put(90,9){\line(-3,0){7}}
    \put(100,9){\line(3,0){7}}

    \put(100,9){\line(3,4){5}}
    \put(100,9){\line(3,-4){5}}

    \put(80,2){$\Delta_4$}
    \put(80,14){$\Delta_1$}

    \put(106,2){$\Delta_4$}
    \put(106,14){$\Delta_2$}

    \put(78,8){$\Delta_4$}
    \put(108,8){$\Delta_3$}
\end{picture}
\end{center}
From the divisor equation \eqref{theta-q}, we then get one \emph{WDVV equation}
\begin{equation*}
\frac{1}{2}\theta_q X+2XY+2XZ=2XZ+2XZ\,.
\end{equation*}
A similar method works for $Y(q)$ and $Z(q)$. More explicitly, we can simplify the WDVV equations and get a system of ODEs
\begin{equation}\label{eqWDVV:N=4}
\left\{
\begin{array}{ll}
\theta_q X&=4X(Z-Y),\\
\theta_q Y&=12Z^2-4X^2-8YZ,\\
\theta_q Z&=4X^2-4Z^2.
\end{array}
\right.
\end{equation}
Furthermore, it is easy to see by direct computation in GW theory (see \cite{Satake:2011}) that
\begin{equation*}
\langle \Delta_1,\Delta_2,\Delta_3,\Delta_4\rangle_{0,4,0}=0\,, \quad \langle \Delta_1,\Delta_2,\Delta_3,\Delta_4\rangle_{0,4,1}=1\,.
\end{equation*}
This implies that the solution to the system of ODEs in \eqref{eqWDVV:N=4} has the following asymptotic behavior
\begin{equation}\label{initial2}
X(q)=q+4q^{3}+\mathcal{O}(q^{5})\,,  \quad Y(q)=-{1\over 4}+\mathcal{O}(q^{3}), \quad Z(q)=2q^{2}+\mathcal{O}(q^{4}).
\end{equation}

%%%%%%%%%%%%%%%%%%%%%%

\subsubsection{Ramanujan identities for $\Gamma_0(4)$}

For the elliptic orbifold $\mathcal{X}_{2}=\mathbb{P}^{1}_{2,2,2,2}$ 
we have
$Q=\exp(2\pi\sqrt{-1}\tau)=q^{2}$.
This implies in particular that the derivatives $\theta_Q:=Q\partial_Q={1\over 2\pi i}{\partial\over \partial \tau}$  and $\theta_{q}:=q\partial_{q}$ are related by $\theta_{q}=2\theta_{Q}$.

According to the structure theorem in \eqref{eqstructurethm} and the result on $M_{*}(\Gamma_{0}(4),\chi_{-4})$ in \eqref{eqringofmodularformsgamma0N}, we have
$
\widetilde{M}_{*}(\Gamma_{0}(4),\chi_{-4})=\mathbb{C}[A_{4},B_{4}^{2},E_{4}]\,,
$
where $E_{4}$ refers to the quasi-modular form $E_{N}$ for the $N=4$ case.
From Proposition \ref{propRamanujanforall} we can see that the Ramanujan identities for $\Gamma_{0}(4)$ become
\begin{equation}\label{eqRamanujan:N=4}
\left\{
\begin{array}{ll}
\theta_Q A_{4}&={1\over 4} A_{4}(E_{4}+C_{4}^{2}-B_{4}^{2})\,,\\
\theta_Q B_{4}&={1\over 4} B_{4}(E_{4}-A_{4}^{2})\,,\\
\theta_Q C_{4}&={1\over 4} C_{4}(E_{4}+A_{4}^{2})\,,\\
\theta_Q E_{4}&={1\over 4}(E_{4}^{2}-A_{4}^{4})\,.
\end{array}
\right.
\end{equation}
The boundary conditions are as shown in \eqref{boundaryconditionsforGamma0N}.

\subsubsection{Equivalence between WDVV equations and Ramanujan identities}
\label{secequivalence2222}

Using the existence and uniqueness for the solutions to ODEs with boundary conditions, a comparison between the WDVV equations in \eqref{eqWDVV:N=4} and the Ramanujan identities in \eqref{eqRamanujan:N=4}
and their boundary conditions \eqref{initial2}, \eqref{boundaryconditionsforGamma0N} implies the following result.
\begin{lem}
For the correlation functions $X,Y,Z$ of the elliptic orbifold $\mathbb{P}^{1}_{2,2,2,2}$, one has
\begin{equation}\label{thmonequivalence:N=4}
\left\{
\begin{array}{ll}
X(q)&={1\over 16} C^{2}_{4}(q)\,,\\
Y(q)&=-{1\over 16} (3E_{4}(q)+A^2_{4}(q)+C^2_{4}(q))\,,\\
Z(q)&=-{1\over 16} (E_{4}(q)-B^2_{4}(q))\,.
\end{array}
\right.
\end{equation}
\end{lem}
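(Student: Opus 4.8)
The plan is to prove the three identities by the standard ODE comparison: I will show that the right-hand sides, read as $q$-series, satisfy the same first-order system \eqref{eqWDVV:N=4} as $(X,Y,Z)$ and carry the same initial data as in \eqref{initial2}, and then invoke uniqueness of the power-series solution. Write $\tilde X=\tfrac{1}{16}C_4^2$, $\tilde Y=-\tfrac{1}{16}(3E_4+A_4^2+C_4^2)$, $\tilde Z=-\tfrac{1}{16}(E_4-B_4^2)$, where each quasi-modular form is taken as a $q$-series under the footnote convention $C_4(q)=C_4(Q^{1/2})$ coming from the $2$-isogeny of Section \ref{sectioncongruenceforall}. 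It then suffices to check two things: (i) $\tilde X,\tilde Y,\tilde Z$ solve \eqref{eqWDVV:N=4}, and (ii) they match the initial data \eqref{initial2}.

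For step (i) I would first recast the Ramanujan identities \eqref{eqRamanujan:N=4} in the orbifold variable $q$. The key point is that the isogeny-transformed forms $A_4(q),B_4(q),C_4(q),E_4(q)$ obey the \emph{same} identities \eqref{eqRamanujan:N=4} with $\theta_Q$ replaced by $\theta_q$ and the coefficient $\tfrac14$ unchanged. Differentiating the candidates with the product rule and substituting these, I would then reduce everything using the fundamental relation \eqref{ABCrelation}, which for $(N,r)=(4,2)$ reads $A_4^2=B_4^2+C_4^2$. For example $\theta_q\tilde X=\tfrac18 C_4\,\theta_q C_4=\tfrac{1}{32}C_4^2(E_4+A_4^2)$, while $4\tilde X(\tilde Z-\tilde Y)=\tfrac14 C_4^2\cdot\tfrac18(E_4+A_4^2)$ once $A_4^2=B_4^2+C_4^2$ collapses $\tilde Z-\tilde Y$ to $\tfrac18(E_4+A_4^2)$; the two sides agree. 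The second and third equations are handled identically: eliminate $B_4^2$ everywhere in favor of $A_4^2-C_4^2$ and verify the resulting polynomial identities in $E_4,A_4^2,C_4^2$.

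For step (ii) I would read off leading behavior from \eqref{boundaryconditionsforGamma0N} with $\kappa_4=16$, so that $C_4(q)=4q^{1/2}(1+O(q))$ and $A_4(q)=B_4(q)=E_4(q)=1+O(q)$; this yields $\tilde X=q+O(q^2)$ and $\tilde Y=-\tfrac14+O(q)$. The system \eqref{eqWDVV:N=4} forces $x_0=z_0=0$ and determines every coefficient recursively through $\theta_q$, leaving only two free parameters, namely the $q^1$-coefficient of $X$ and the constant term of $Y$. Since the candidate gives exactly $1$ and $-\tfrac14$ for these, matching \eqref{initial2}, the existence-uniqueness theorem for this power-series system forces $\tilde X=X$, $\tilde Y=Y$, $\tilde Z=Z$.

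The part that needs the most care is the bookkeeping of the variable change rather than the algebra. One must confirm that after the $2$-isogeny the Ramanujan identities hold with the coefficient $\tfrac14$ and \emph{no} stray factor of $2$: applying the naive relation $\theta_q=2\theta_Q$ to the untransformed $Q$-series would introduce a spurious factor and destroy the match. Equally, one must identify precisely which coefficients \eqref{eqWDVV:N=4} leaves undetermined, so that matching \eqref{initial2} really pins the solution down. Once these two conventions are fixed, steps (i)--(ii) reduce to the routine polynomial verifications sketched above.
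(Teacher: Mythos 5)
Your route is the paper's own: the paper proves this lemma precisely by comparing the WDVV system \eqref{eqWDVV:N=4} with the Ramanujan identities \eqref{eqRamanujan:N=4} under the footnote convention $C_{4}(q)=C_{4}(Q^{1/2})$, matching the boundary data \eqref{initial2} against \eqref{boundaryconditionsforGamma0N}, and invoking uniqueness of the power-series solution. Your steps (i)--(ii) — in particular the observation that the Ramanujan identities survive the renaming $Q\mapsto q$ with the coefficient $\tfrac14$ intact (they are identities of one variable, so no factor of $2$ from $\theta_{q}=2\theta_{Q}$ enters), and the elimination of $B_{4}^{2}$ via $A_{4}^{2}=B_{4}^{2}+C_{4}^{2}$ — reproduce this argument, and the polynomial verifications you sketch do close.

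One assertion in your uniqueness bookkeeping is, however, false: the system \eqref{eqWDVV:N=4} does \emph{not} force $x_{0}=z_{0}=0$. The constant triple $X=Y=Z=c$ is an exact solution for every $c$ (check: $4c(c-c)=0$, $12c^{2}-4c^{2}-8c^{2}=0$, $4c^{2}-4c^{2}=0$), so the solution space is strictly larger than the two-parameter family you describe. The vanishing of $x_{0}$ and $z_{0}$ is geometric input carried by \eqref{initial2} (degree-zero invariants), not a consequence of the ODEs — this is exactly the point of the paper's remark following the lemma, that because $\theta_{q}$ annihilates constants one must prescribe both the $q^{0}$ and $q^{1}$ coefficients. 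The fix is cheap: impose $x_{0}=z_{0}=0$, $y_{0}=-\tfrac14$ from \eqref{initial2}; then the recursion determines $z_{n}$ and $y_{n}$ for all $n\geq 1$ and leaves free only $x_{1}$, since the coefficient $n-4(z_{0}-y_{0})=n-1$ in the $X$-equation vanishes exactly at $n=1$. Accordingly, you must also verify the constant term of your third candidate, $\tilde Z=-\tfrac1{16}\bigl(E_{4}(q)-B_{4}^{2}(q)\bigr)=O(q)$ since $E_{4}=1+O(q)$ and $B_{4}=1+O(q)$ — a check you omitted precisely because you believed it automatic. With these two amendments your proof is complete and coincides with the paper's.
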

We remark here that the system of WDVV equations given above is a system of first order linear inhomogeneous ODEs with inhomogeneous terms potentially undefined at $q=0$, so the usual existence and uniqueness theorem for the solutions does not hold. Rigorously, we should subtract the first few leading terms in the correlation functions which are not defined when divided by $q$ (coming from the $\theta_{q}$ derivative instead of ordinary derivative $\partial_{q}$). Instead of doing this, we use sufficiently many boundary conditions by taking into consideration of the terms which might give trouble potentially. That is, we give at least the coefficients for the $q^{0},q^{1}$ terms, as we have done in \eqref{initial2}. When dealing with other elliptic orbifold curve cases later in this paper, we shall follow the same procedure.\\

From the result on $\widetilde{M}_{*}(\Gamma_{0}(4),\chi_{-4})$, we know
\begin{equation*}
{1\over 16}C^{2}_{4}(Q)\,,
\quad
-{1\over 16} (3E_{4}(Q)+A^{2}_{4}(Q)+C^{2}_{4}(Q))\,,
\quad
-{1\over 16}( E_{4}(Q)-B^{2}_{4}(Q))
\end{equation*}
are all quasi-modular forms for $\Gamma_{0}(4)$.
Recall that the modular groups $\Gamma_{0}(4)$ and $\Gamma(2)$ are related by the 2-isogeny $\tau\mapsto 2\tau$, so that if $f(Q)$
is a quasi-modular form for $\Gamma_{0}(4)$, then $f(q=Q^{1\over 2})$ is so for $\Gamma(2)$, see e.g. \cite{Zagier:2008}.
Hence we immediately get 
\begin{prop}\label{propbasicgenuszero:N=4}
The system of WDVV equations satisfied by the GW correlation functions $X,Y,Z$ for
$\mathcal{X}_{2}$ is equivalent to the set of Ramanujan identities for the quasi-modular forms $A_{4}^{2},C_{4}^{2},E_{4}$ belonging to the ring
$\widetilde{M}_{\textrm{even}}(\Gamma_{0}(4),\chi_{-4})$. 
Moreover, the GW correlation functions $X, Y$ and $Z$ are weight $2$ quasi-modular forms with respect to the modular group $\Gamma(2)$.
\end{prop}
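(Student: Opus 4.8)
The plan is to establish Proposition \ref{propbasicgenuszero:N=4} in two stages, matching its two assertions. The first assertion is the \emph{equivalence} of the two ODE systems; the second upgrades the solution functions to genuine quasi-modular forms for $\Gamma(2)$. First I would set up the change of variables carefully. Since $Q = q^2$ for $\mathcal{X}_2$, the derivation $\theta_q$ and $\theta_Q$ are related by $\theta_q = 2\theta_Q$, so the WDVV system \eqref{eqWDVV:N=4}, which is written in $\theta_q$, must be rescaled by a factor of $1/2$ before comparison with the Ramanujan system \eqref{eqRamanujan:N=4}, which is written in $\theta_Q$. The goal is to exhibit an explicit linear change of dependent variables --- precisely the substitution \eqref{thmonequivalence:N=4} --- under which the rescaled WDVV system is carried \emph{exactly} onto the Ramanujan system for $A_4^2, C_4^2, E_4$, and vice versa.

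For the forward direction I would substitute the candidate expressions \eqref{thmonequivalence:N=4} for $X, Y, Z$ into \eqref{eqWDVV:N=4} and compute each $\theta_q$-derivative using \eqref{eqRamanujan:N=4} together with the relation $A_4^2 = B_4^2 + C_4^2$ from \eqref{ABCrelation}. Each of the three WDVV equations should then reduce, after using \eqref{ABCrelation} to eliminate one of $A_4^2, B_4^2, C_4^2$, to an identity that holds purely formally in the polynomial ring $\mathbb{C}[A_4^2, C_4^2, E_4]$; this verifies that the quasi-modular forms solve the WDVV system. To get the genuine \emph{equivalence} rather than a one-way implication, I would invoke existence and uniqueness for the system of first-order ODEs: the right-hand sides of \eqref{eqWDVV:N=4} are polynomial (hence locally Lipschitz), and the boundary data \eqref{initial2} match the asymptotics of $\tfrac{1}{16}C_4^2$, $-\tfrac{1}{16}(3E_4 + A_4^2 + C_4^2)$, $-\tfrac{1}{16}(E_4 - B_4^2)$ read off from \eqref{boundaryconditionsforGamma0N}. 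This is exactly the content of the Lemma preceding the Proposition, so for the first assertion I may simply cite that Lemma once the algebraic identity has been checked, while being mindful of the subtlety flagged in the remark after the Lemma: the $\theta_q$-derivative introduces potential trouble at $q=0$, so I would match the $q^0$ and $q^1$ coefficients explicitly (as in \eqref{initial2}) rather than relying on the naive uniqueness theorem.

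For the second assertion I would argue as follows. By \eqref{eqquasiring:N=4} the three combinations $C_4^2$, $3E_4 + A_4^2 + C_4^2$, and $E_4 - B_4^2$ all lie in $\widetilde{M}_*(\Gamma_0(4),\chi_{-4})$, each of weight $2$ (since $A_4, B_4, C_4, E_4$ all have weight $2$ in the conventions fixed by Table \ref{tableetaexpansions} and \eqref{defofE}). Hence as functions of $Q$ these are weight-$2$ quasi-modular forms for $\Gamma_0(4)$. The final step is to transport this along the $2$-isogeny: since $q = Q^{1/2}$ and $\Gamma_0(4)$ is conjugate to $\Gamma(2)$ under $\tau \mapsto 2\tau$, the pullback $f(q) = f(Q^{1/2})$ of a quasi-modular form $f(Q)$ for $\Gamma_0(4)$ is a quasi-modular form of the same weight for $\Gamma(2)$. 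I would justify this using the standard fact (cited to \cite{Zagier:2008}) that a change of variable by an isogeny intertwines quasi-modularity for the two conjugate groups without altering the weight. Combining, $X(q), Y(q), Z(q)$ are weight-$2$ quasi-modular forms for $\Gamma(2)$.

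I expect the main obstacle to be purely bookkeeping rather than conceptual: the forward algebraic verification requires carefully tracking the factor of $2$ between $\theta_q$ and $\theta_Q$ and correctly using \eqref{ABCrelation} to collapse the three apparently independent polynomial identities into tautologies --- a sign error or a mismatched coefficient anywhere breaks the exact match. The genuinely delicate point, however, is the $q=0$ behavior of the WDVV system noted in the remark: because the inhomogeneous terms can be singular when one divides by $q$, the ordinary existence-uniqueness theorem does not directly apply, so the equivalence claim must be backed by matching enough low-order Taylor coefficients (the $q^0$ and $q^1$ terms) to pin down the solution uniquely. Everything else --- the weight count and the isogeny transport --- follows from the structure results already assembled in Section \ref{sectionmodular}.
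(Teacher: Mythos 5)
Your proposal follows essentially the same route as the paper: the paper likewise proves the Lemma by comparing the WDVV system \eqref{eqWDVV:N=4} with the Ramanujan system \eqref{eqRamanujan:N=4} via existence and uniqueness of ODE solutions with matched boundary data (handling the $q=0$ singularity exactly as you describe, by prescribing the $q^0$ and $q^1$ coefficients), and then deduces quasi-modularity for $\Gamma(2)$ from the ring structure \eqref{eqquasiring:N=4} together with the $2$-isogeny $\tau\mapsto 2\tau$ relating $\Gamma_0(4)$ and $\Gamma(2)$. One small correction to your parenthetical weight count: in the paper's conventions $A_4, B_4, C_4$ have weight $1$ (only $E_4$ has weight $2$), so it is the squares $A_4^2, B_4^2, C_4^2$ together with $E_4$ that are weight $2$ --- your conclusion that $X, Y, Z$ have weight $2$ is unaffected.
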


Here and in the rest of the paper, when we say two sets of equations are "equivalent", we mean that they
are the same up to the action of a linear transformation. This ambiguity will disappear if we replace the system of first-order differential equations by the corresponding higher-order differential equations. 
For example,
in the current case the equations satisfied by $X(q)$ and $C_{4}^{2}(q)/16$ are identical.

For the structure of the correlation functions, we can actually know a little more.
Using \eqref{eqisogenyrelation:N=4}, we arrive at the following consequence.
\begin{cor}
The GW correlation functions $X,Y,Z$ of the elliptic orbifold curve $\mathbb{P}^1_{2,2,2,2}$  belong to the ring $\mathbb{C}[A^2_{4}(Q),A_{4}(Q)C_{4}(Q),C^2_{4}(Q)]$. In fact, we have, under the change of variable $q=Q^{1\over 2}$,
\begin{equation}\label{eqXYZQN=4}
\left\{
\begin{array}{ll}
X(q)&={1\over 4}A_{4}(Q)C_{4}(Q)\,,\\
Y(q)&={1\over 8}(-3E_{4}(Q)+A^{2}_{4}(Q)-2C^{2}_{4}(Q))\,,\\
Z(q)&={1\over 8}(-E_{4}(Q)+A^{2}_{4}(Q))\,.
\end{array}
\right.
\end{equation}
\end{cor}

%%%%%%%%%%%%%%%%%%%%%%%

\subsection{Other cases: $\mathcal{X}_{3},\mathcal{X}_{4},$ and $\mathcal{X}_{6}$}

Now we explain the equivalence between WDVV equations for the genus zero basic correlation functions of $\mathcal{X}_r$ and the Ramanujan identities for $\Gamma(r)$, $r=3,4,6$. For each of them, $\mathcal{X}_r$ is an elliptic orbifold $\mathbb{P}^1$ with three orbifold points. The GW theory in genus zero was studied in \cite{Satake:2011, Krawitz:2011}. Here we follow the setting in \cite{Krawitz:2011}. 

\subsubsection{Reconstruction of genus zero primary correlators from the basic ones}
\label{sectionreconstruction}

Using formula \eqref{dim-virt}, a non-vanishing genus zero primary correlator $\DL\phi_1,\cdots,\phi_n\DR_{0,n}$ must satisfy
\begin{equation*}
\sum_{i=1}^n\deg\phi_i=2n-4\,.
\end{equation*}
Since $0\leq\deg\phi_i\leq2$ and $\deg\phi_i=2$ if and only if $\phi_i=P$,
it is easy to see that the above formula implies there are only finitely many non-vanishing genus zero primary correlators, with no $P$-class as insertions. 
According to the string equation, we have 
\begin{equation*}
\DL {\bf 1}, \phi_1,\cdots,\phi_n\DR_{0,n+1}=0, \quad n\geq3\,.
\end{equation*}
Furthermore, according to the divisor equation \eqref{theta-q}, the correlator with $P$-insertions can be replaced by the $\theta_q$-derivatives of a correlator with no $P$-insertions.
Thus besides $\DL {\bf 1}, \cdots\DR_{0,3}$, all other correlators can be obtained from the correlators with all the insertions being twisted sectors. 
We refer to the twisted sectors $\Delta_i,i=1,2,3$ as \emph{primitive} twisted sectors.
A genus zero primary correlator is called to be \emph{basic} if all of its insertions are twisted sectors and at most two of the insertions are not primitive.

We consider the following form of a WDVV equation, see \cite{Krawitz:2011}, 
\begin{equation}\label{WDVV}
\begin{array}{ll}
\DL\phi_1\bullet\phi_2,\phi_3,\phi_4,\cdots\DR_{0,n+3}=
&-\DL\phi_1,\phi_2,\phi_3\bullet\phi_4,\cdots\DR_{0,n+3}
+\DL\phi_2,\phi_1\bullet\phi_3,\phi_4,\cdots\DR_{0,n+3}\\
&+\DL\phi_1,\phi_3,\phi_2\bullet\phi_4,\cdots\DR_{0,n+3}
+S\,.
\end{array}
\end{equation}
Here $S$ represents all the terms where the number of insertions on each component is at most $n+2$. Besides that, the degree of the first insertion (that is, $\phi_{1}$ or $\phi_2$) on the right hand side of the above equation is strictly smaller than the degree of $\phi_1\bullet\phi_2$. 
By repeating this process, the WDVV equations described above gives an algorithm in obtaining all the genus zero primary correlators from the basic ones.

More explicitly, all the basic correlators for the elliptic orbifold $\mathcal{X}_r$, $r=3,4,6$, and the related WDVV equations were derived and listed in the Appendix in \cite{Shen:2013thesis}. They are already enough to determine all the basic correlators recursively once we know the first few GW invariants in the correlation functions. 

In the rest of this section, we will take the following approach in proving genus zero modularity. We add some WDVV equations to what were listed in \cite{Shen:2013thesis}. Then we prove the basic correlators are quasi-modular forms using the same method we used for the $r=2$ case. After that we use the WDVV equation \eqref{WDVV} to compute all the other genus zero primary correlators, which turn out to be quasi-modular forms as well. 

\subsubsection{$\mathcal{X}_{3}=\mathbb{P}^{1}_{3,3,3}$}
\label{sectiongenuszero:N=3}

Following the notations in \cite{Shen:2013thesis} , c.f. \cite{Krawitz:2011}, up to symmetry, all of the non-vanishing basic correlators of  $\mathbb{P}^{1}_{3,3,3}$ are classified as follows\footnote{Kawasaki's orbifold Riemann--Roch formula is used for the classification.}
\begin{equation}\label{basic-3}
\left\{
\begin{array}{lll}
Z_{1}=\DL \Delta_{1},\Delta_{1}, \Delta_{1} \DR_{0,3} \,,
&Z_{2}=\DL  \Delta_{1}^{2},\Delta_{1},\Delta_{2}^{2}, \Delta_{2}   \DR_{0,4}  \,,
&Z_{3}=\DL  \Delta_{1}^{2},\Delta_{1}^{2},\Delta_{1}, \Delta_{1}   \DR_{0,4} \,,\\
Z_{4}=\DL  \Delta_{1}, \Delta_{2}, \Delta_{3}   \DR_{0,3}\,,
&Z_{5}=\DL  \Delta_{1}^{2},\Delta_{1}^{2},\Delta_{2}, \Delta_{3}    \DR_{0,4}  \,,
&Z_{6}=\DL  \Delta_{1}^{2}, \Delta_{2}^{2},\Delta_{3}, \Delta_{3}    \DR_{0,4} \,.
\end{array}
\right.
\end{equation}
Let us consider the following six WDVV equations, see \cite{Shen:2013thesis},
\begin{equation}\label{eqWDVV:N=3}
\left\{
\begin{array}{ll}
 \theta_q Z_{1}&=9(Z_{4}Z_{6}-Z_{1}Z_{2})\,,\\
\theta_q Z_{3}&=18Z_{2}(Z_{2}-Z_{3})\,,\\
\theta_q Z_{4}&=9Z_{4}(Z_{2}-Z_{3})\,,\\
2Z_{2}Z_{4}&=Z_{1}Z_{5}+Z_{3}Z_{4}\,,\\
Z_{1}Z_{6}&=Z_{4}Z_{5}\,,\\
\theta_{q}^{3}Z_{1}&=27 Z_{6}\theta_{q}Z_{4}-27 Z_{2}\theta_{q}^{2}Z_{1}\,.
\end{array}
\right.
\end{equation}
Not all the WDVV equations involve $\theta_q$ derivatives. For example, the fifth equation above is simplified from the WDVV equation (by taking $\phi_1=\phi_2=\Delta_1, \phi_3=\phi_4=\Delta_2$ in \eqref{WDVV})
\begin{equation*}
\DL \Delta_1\bullet\Delta_1, \Delta_2, \Delta_2, \Delta_3^2\DR_{0,4}+\DL\Delta_1,\Delta_1, \Delta_2\bullet\Delta_2, \Delta_3^2\DR_{0,4}=2\DL \Delta_1\bullet\Delta_2, \Delta_1, \Delta_2,\Delta_3^2\DR_{0,4}\,.
\end{equation*}
A direct computation in GW theory shows 
\begin{equation*}
Z_{1}(q)={1\over 3}+\mathcal{O}(q), \quad Z_{4}(q)=q+\mathcal{O}(q^2)\,.
\end{equation*}
The boundary conditions for \eqref{eqWDVV:N=3} can be obtained by plugging in the above conditions:
\begin{equation}\label{boundary=3}
\left\{
\begin{array}{lll}
   Z_{1}(q)={1\over 3}+\mathcal{O}(q^3)\,, 
& Z_{2}(q)=0+\mathcal{O}(q^3)\,, 
& Z_{3}(q)=-{1\over 9}+\mathcal{O}(q^3)\,,\\
   Z_{4}(q)=q+\mathcal{O}(q^4)\,, 
& Z_{5}(q)={1\over 3}q+\mathcal{O}(q^4)\,,
& Z_{6}(q)=\mathcal{O}(q^2)\,.
\end{array}
\right.
\end{equation}
As mentioned before, these equations with boundaries conditions give a natural algorithm in determining all the basic correlation functions in \eqref{basic-3}. 
Since the system of WDVV equations is over-determined, we are free to add other WDVV equations in determining the correlation functions if necessary.
In the following, we shall use
%$\theta_q^{2}Z_{1}=18(Z_6\theta_{q}Z_{4}-Z_2\theta_{q}Z_{1})\,.$
\begin{equation}\label{N=3extra}
\theta_q Z_{2}=9(Z_{5}Z_{6}-Z_{2}^2)\,.
\end{equation}
Then by straightforward computation, we can show that the ring generated by $Z_{i},i=1,2,\cdots 6$ is closed under the derivative $\theta_{q}$. Indeed, we have\footnote{It would be interesting to explain these relations in a more geometric way.}
\begin{equation}\label{eqnewpolynomialrelations:N=3}
2Z_{2}=Z_{1}^{2}+Z_{3}\,,\quad Z_{5}=Z_{1}Z_{4}\,, \quad Z_{6}=Z_{4}^{2}\,.
\end{equation}
Then we can choose a minimal set of differential equations to be the following ones satisfied by $Z_{1},Z_{4},Z_{3}$:
\begin{equation}\label{eqWDVVsimplified:N=3} 
\left\{
\begin{array}{ll}
& \theta_q Z_{1}={9\over 2}(-Z_{1}Z_{3}+2Z_{4}^{3}-Z_{1}^{3})\,,\\
&\theta_q Z_{4}={9\over 2} Z_{4}(-Z_{3}+Z_{1}^{2})\,,\\
&\theta_q Z_{3}={9\over 2}(-Z_{3}^{2}+Z_{1}^{4})\,.
\end{array}
\right.
\end{equation}
These are identical to the Ramanujan identities satisfied by $A_{3}/3,C_{3}/3,-E_{3}/9$ in Proposition \ref{propRamanujanforall} for the $N=3$ case 
which generate
$
\widetilde{M}_{*}(\Gamma(3))\cong M_{*}(\Gamma(3))\otimes \mathbb{C}[E_{3}]=\mathbb{C}[A_{3},C_{3},E_{3}]$.
By comparing the boundary conditions we get (recall that $q=Q^{1\over 3}$)
\begin{equation}\label{X3-1}
Z_{1}={1\over 3}A_{3}(Q)\,, \quad Z_{4}={1\over 3}C_{3}(Q)\,,\quad Z_{3}=-{1\over 9}E_{3}(Q)\,.
\end{equation}
Plugging these into \eqref{eqnewpolynomialrelations:N=3}, we then obtain
\begin{equation}\label{X3-2}
Z_{2}=-{1\over 18}(E_{3}(Q)-A^{2}_{3}(Q))\,, 
\quad
Z_{5}={1\over 9}A_{3}(Q)C_{3}(Q)\,,\quad Z_{6}={1\over 9}C^{2}_{3}(Q)\,.
\end{equation}
In order to obtain all the primary correlators, we still have to compute the non-basic correlators. As we discussed earlier in Section \ref{sectionreconstruction}, they can be reconstructed by using divisor equation, string equation, and WDVV equations (if no insertion is the unit $\bf{1}$ or the $P$-class). 
\iffalse
Let us give one example as follows.
\begin{ex}
Let $Z_7:=\DL \Delta_3^2,\Delta_1^2,\Delta_1^2,\Delta_{1},\Delta_2^2\DR_{0,5}^{\mathcal{X}_3}$. Since $\Delta_3\bullet\Delta_1^2=0$ and $\Delta_1^2\bullet\Delta_1^2=0$, by taking $\phi_1=\phi_2=\Delta_3, \phi_3=\phi_4=\Delta_1^2$, a WDVV equation for $\DL \Delta_3\bullet\Delta_3,\Delta_1^2,\Delta_1^2,\Delta_{1},\Delta_2^2\DR_{0,5}^{\mathcal{X}_3}$ as in \eqref{WDVV} is 
\begin{equation*}
3Z_1Z_7+3Z_3Z_6=6Z_6Z_2\,.
\end{equation*}
This implies
\begin{equation*}
\DL \Delta_{1},\Delta_1^2,\Delta_1^2,\Delta_2^2,\Delta_3^2\DR_{0,5}^{\mathcal{X}_3}={A_3C_3^2\over 27}\,.
\end{equation*}
\qed
\end{ex} 
\fi
After lengthy computations, the genus zero primary potential is shown to be\footnote{These results match those in \cite{Satake:2011}.} 
\begin{eqnarray*}
F_{0}^{\mathcal{X}_3}
=&&\frac{1}{2} t_0^2t
+\frac{1}{3} t_0(t_1t_6+t_2t_5+t_3t_4)
+(t_1t_2t_3) \, {C_{3}\over 3}
+\frac{1}{6} (t_1^3+t_2^3+t_3^3) \, {A_{3}\over 3}
\\
&&+(t_1t_2t_5t_6+t_1t_3t_4t_6+t_2t_3t_4t_5)\, \frac{A^2_{3}-E_{3}}{18}
+\frac{1}{2} (t_1^2t_4t_5+t_2^2t_4t_6+t_3^2t_5t_6) \, {C^2_{3}\over 9}
\\
&&+\frac{1}{2} (t_1t_2t_4^2+t_1t_3t_5^2+t_2t_3t_6^2) \, {A_{3}C_{3}\over 9}
+\frac{1}{4} (t_1^2t_6^2+t_2^2t_5^2+t_3^2t_4^2) \, {-E_{3}\over 9}
\\
&&+\frac{1}{2} (t_1t_4t_5t_6^2+t_2t_4t_5^2t_6+t_3t_4^2t_5t_6) \, {A_{3}C^2_{3}\over 27}
+\frac{1}{4} (t_1t_4^2t_5^2+t_2t_4^2t_6^2+t_3t_5^2t_6^2) \, {A^2_{3}C_{3}\over 27}
\\
&&+\frac{1}{6} (t_1t_6(t_4^3+t_5^3)+t_2t_5(t_4^3+t_6^3)+t_3t_4(t_5^3+t_6^3))  \, {C^3_{3}\over 27}
+\frac{1}{24} (t_1t_6^4+t_2t_5^4+t_3t_4^4) \, {A^3_{3}\over 27}
\\
&&+\frac{1}{8}(t_4^2t_5^2t_6^2) \, {2C^4_{3}+A^3_{3}C_{3}\over 81}
+\frac{1}{36} (t_4^3t_5^3+t_4^3t_6^3+t_5^3t_6^3) \, {A_{3}C^3_{3}\over 27}
\\
&&+\frac{1}{24} (t_4t_5t_6^4+t_4t_5^4t_6+t_4^4t_5t_6)  \, {A^2_{3}C^2_{3}\over 27}
+\frac{1}{720}(t_4^6+t_5^6+t_6^6) \, {2A_{3}C^3_{3}-A^4_{3}\over 27}.
\end{eqnarray*}
Here the parameters $\{t_{0},t,t_{1},t_{2},t_{3},t_{4},t_{5},t_{6}\}$ are the coordinates for $H$ with respect to the basis $\{{\bf{1}},P,\Delta_{1},\Delta_{2},\Delta_{3},\Delta_{3}^{2},\Delta_{2}^{2},\Delta_{1}^{2}\}$.
The weight formula in Theorem \ref{thmprimarygenuszeromodularity} for $\mathcal{X}_3$ is a direct consequence of the reconstruction process.
Therefore, we have proved the following result.
\begin{prop}\label{propbasicgenuszero:N=3}
The system of WDVV equations satisfied by the basic GW correlation functions $Z_{1},Z_{4},Z_{3}$ for
$\mathcal{X}_{3}$ is equivalent to the set of Ramanujan identities for the generators of the ring
$\widetilde{M}_{*}(\Gamma(3))$. Moreover, the correlation functions are quasi-modular forms for $\Gamma(3)$ whose weights are given by the $w=T-2$, where $T$ is the number of twisted sectors among the insertions (which is the same as the number of insertions for these basic ones).
\end{prop}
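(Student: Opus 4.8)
The plan is to reduce the over-determined WDVV system for the six basic correlators $Z_1,\dots,Z_6$ of $\mathcal{X}_3$ to a closed first-order system in the three functions $Z_1,Z_4,Z_3$, and then to recognize this reduced system, together with its boundary data, as the $N=3$ Ramanujan identities of Proposition \ref{propRamanujanforall}. First I would assemble the WDVV relations \eqref{eqWDVV:N=3} together with the extra equation \eqref{N=3extra} and verify by direct substitution that the $\mathbb{C}$-algebra generated by $Z_1,\dots,Z_6$ is preserved by $\theta_q$; this produces the two remaining evolution equations \eqref{eqWDVVnew:N=3}. The benefit of having the full derivative table is that the purely algebraic WDVV relations (the fourth and fifth lines of \eqref{eqWDVV:N=3}) can be promoted to exact polynomial identities among the correlators: from $\theta_q\log Z_6=2\,\theta_q\log Z_4$ one integrates to $Z_6=c\,Z_4^2$, and the asymptotics \eqref{boundary=3} force $c=1$; feeding this back into the algebraic relations yields \eqref{eqnewpolynomialrelations:N=3}, i.e. $2Z_2=Z_1^2+Z_3$, $Z_5=Z_1Z_4$, $Z_6=Z_4^2$.

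Using \eqref{eqnewpolynomialrelations:N=3} to eliminate $Z_2,Z_5,Z_6$, the system collapses to the closed three-dimensional form \eqref{eqWDVVsimplified:N=3}. I would then compare \eqref{eqWDVVsimplified:N=3} termwise with the $N=3$ case of Proposition \ref{propRamanujanforall}. Under the rescaling $Z_1=A_3/3$, $Z_4=C_3/3$, $Z_3=-E_3/9$ — and after using the relation $A_3^3=B_3^3+C_3^3$ from \eqref{ABCrelation} to rewrite the $2Z_4^3-Z_1^3$ term as the $\tfrac{C_3^3-B_3^3}{A_3^3}A_3^2$ term appearing in Proposition \ref{propRamanujanforall} — the two systems become literally identical. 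This is precisely the asserted equivalence \emph{up to a linear transformation} (here the diagonal rescaling), which is the paper's declared meaning of ``equivalent.''

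To upgrade the equality of differential systems to equality of the functions, I would match leading $q$-coefficients. The direct GW computations $Z_1=1/3+\mathcal{O}(q)$ and $Z_4=q+\mathcal{O}(q^2)$, propagated through the system to give \eqref{boundary=3}, agree with the asymptotics \eqref{boundaryconditionsforGamma0N} of $A_3,C_3,E_3$ after the $r$-isogeny $q=Q^{1/3}$; this forces \eqref{X3-1}, and substituting into \eqref{eqnewpolynomialrelations:N=3} yields \eqref{X3-2}. The weight claim $w=T-2$ then follows from the weight bookkeeping of the identification: since $A_3,C_3\in M_*(\Gamma(3))$ have weight $1$ and $E_3$ has weight $2$, one gets $w=1$ for $Z_1$ ($T=3$) and $Z_4$ ($T=3$), and $w=2$ for $Z_3$ ($T=4$), all consistent with $w=T-2$.

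The main obstacle is not any individual computation but the \emph{uniqueness} step. Because the system is written with $\theta_q=q\,\partial_q$, it is a linear system with a singular point at $q=0$, and the constant (inhomogeneous) pieces are not pinned down by a single initial value, so the standard existence-and-uniqueness theorem does not apply verbatim. As flagged in the remark following the $r=2$ lemma, the fix is to prescribe enough low-order Taylor data — here the coefficients through $q^1$ recorded in \eqref{boundary=3} — so that the GW correlators and the candidate quasi-modular forms are forced to agree order by order. Checking that this quantity of boundary data genuinely kills the indeterminacy introduced by the singular point is the delicate part; by contrast, the closedness verification and the extraction of the algebraic relations \eqref{eqnewpolynomialrelations:N=3} are routine, if somewhat lengthy.
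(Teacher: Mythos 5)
Your proposal is correct and follows essentially the same route as the paper: the same extra WDVV equation \eqref{N=3extra}, the same closure-under-$\theta_q$ check yielding \eqref{eqWDVVnew:N=3}, the integration of $\theta_q\log Z_6=2\theta_q\log Z_4$ to obtain the polynomial relations \eqref{eqnewpolynomialrelations:N=3}, the reduction to \eqref{eqWDVVsimplified:N=3}, and the identification with the $N=3$ Ramanujan identities (via $A_3^3=B_3^3+C_3^3$) under the rescaling $Z_1=A_3/3$, $Z_4=C_3/3$, $Z_3=-E_3/9$ matched through boundary conditions. Your added care about the singular point at $q=0$ and the sufficiency of the low-order Taylor data in \eqref{boundary=3} is precisely the caveat the paper flags in the remark after the $r=2$ case.
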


%%%%%%%%%%%%%%%%%%%

\subsubsection{$\mathcal{X}_{4}=\mathbb{P}^{1}_{4,4,2}$}

For ease of notation, hereafter we shall denote $x=\Delta_{1}, y=\Delta_{2}, z=\Delta_{3}$ for the three primitive twisted sectors.
We also omit the subscripts $0,n$ in the notation $\DL \cdots \DR_{0,n}$ for the correlation functions.
Up to symmetry, all of the genus zero basic correlation functions are listed as follows
$$
\begin{array}{llll}
Z_{1}=\DL  x,x,x^{2}\DR\,,
&Z_{2}=\DL x,x^{3},z,z\DR\,,
&Z_{3}=\DL  x^{2},x^{2},z,z\DR\,,
&Z_{4}=\DL  x,x^{3},y,y^{3}\DR\,,\\
Z_{5}=\DL z,z,z,z\DR\,,
&Z_{6}=\DL  x,x,x^{3},x^{3}\DR\,,
&Z_{7}=\DL  x,y,z\DR\,,
&Z_{8}=\DL x^{2},x^{3},y,z\DR\,,\\
Z_{9}=\DL x^{2},y,y\DR\,,
&Z_{10}=\DL  x^{2},y^{2},z,z\DR\,,
&Z_{11}=\DL x^{3},x^{3},y,y\DR\,,
&Z_{12}=\DL x^{3},y,y^{2},z\DR\,.
\end{array}
$$
A system of the corresponding WDVV equations are given by (see\footnote{We have corrected some typos in the third and seventh equations in the original work.} \cite{Shen:2013thesis})
\begin{eqnarray}
\theta_{q}Z_{1}&=&8 (Z_{7}Z_{12}-Z_{1}Z_{2})\,,\label{eqWDVV1:N=2}\\
\theta_{q}Z_{1}&=&8(2 Z_{7}Z_{12}-Z_{9}Z_{10}-Z_{1}Z_{3})\,,\label{eqWDVV2:N=2}\\
\theta_{q}Z_{1}&=&-16 Z_{1}Z_{4}+8Z_7Z_{12}\,,\label{eqWDVV3:N=2}\\
\theta_{q}Z_{5}&=&16( -Z_{3}Z_{5} +2Z_{3}^{2}+2Z_{10}^{2} )-2\theta_{q}Z_{3}\,,\label{eqWDVV4:N=2}\\
\theta_{q}Z_{6}&=&8 (-2Z_{2}Z_{6}+Z_{2}^{2})\,,\label{eqWDVV5:N=2}\\
\theta_{q}Z_{7}&=&4 (4Z_{2}-Z_{5})Z_{7}\,,\label{eqWDVV6:N=2}\\
\theta_{q}Z_7 &=&16Z_{1}Z_{8}+16Z_{9}Z_{12}-8Z_{2}Z_{7}\,,\label{eqWDVV7:N=2}\\
 \theta_{q}Z_{9}&=& 8(-Z_{2}Z_{9}+Z_{7}Z_{8})\,,\label{eqWDVV8:N=2}\\
0&=& - Z_{1}Z_{10}+Z_{7}Z_{8}+Z_{2}Z_{9}-Z_{3}Z_{9}\,,\label{eqWDVV9:N=2}\\
0&=& -2  Z_{1} Z_{11}+ Z_{7}Z_{8}\,,\label{eqWDVV10:N=2}\\
0&=&-Z_{1}Z_{12}+ Z_{7}Z_{11}-Z_{8}Z_{9}\,,\label{eqWDVV11:N=2}\\
 \theta_{q}^{2} Z_{1}&=&16 ( -Z_{2}\,\theta_{q}Z_{1}   + Z_{12}\,\theta_{q}Z_{7})\,.\label{eqWDVV12:N=2}
\end{eqnarray}
Similarly as before, the corresponding boundary conditions are calculated to be
$$
\begin{array}{llll}
Z_{1}(q)={1\over 4}+\mathcal{O}(q^4)\,, & Z_{2}(q)=\mathcal{O}(q^4)\,, & Z_{3}(q)=\mathcal{O}(q^4)\,,& Z_{4}(q)=\mathcal{O}(q^4)\,,\\
Z_{5}(q)=-{1\over 4}+\mathcal{O}(q^4)\,, & Z_{6}(q)=-{1\over 16}+\mathcal{O}(q^4)\,, & Z_{7}(q)=q+\mathcal{O}(q^5)\,, &
Z_{8}(q)={1\over 4}q+\mathcal{O}(q^5)\,,\\
Z_{9}(q)=\mathcal{O}(q^2)\,, & Z_{10}(q)=\mathcal{O}(q^2)\,, & Z_{11}(q)=\mathcal{O}(q^2)\,, & Z_{12}(q)=\mathcal{O}(q^2)\,.
\end{array}
$$
Computationally, the basic correlation functions listed above can be determined uniquely by solving the coefficients in their series expansions recursively by using these boundary conditions.

Now we aim to find the closed formulas for these correlation functions. As before, we are free to use additional equations to simplify the computations in this over-determined system.
Here we use the following additional WDVV equations 
\begin{eqnarray}
&&\theta_{q}Z_7=16Z_{1}Z_{8}+16Z_{9}Z_{12}-8Z_{3}Z_{7}\label{eqWDVV13:N=2}\,,\\
&&\theta_{q}Z_{10}=8Z_{10}(4Z_{3}-Z_{5})\label{eqWDVV14:N=2}\,,\\
&&\theta_{q}Z_{4}=4Z_2^2+16Z_8Z_{12}-16Z_2Z_4\label{eqWDVV15:N=2}\,,\\
&&Z_{7}^{2}=4Z_{1}Z_{9}\label{eqWDVV16:N=2}.
\end{eqnarray}

Now we prove the equivalence between these WDVV equations and the Ramanujan equations for the generators of the ring of quasi-modular forms for $\Gamma(4)$ by following the same procedure discussed in the $\mathbb{P}^{1}_{3,3,3}$ case. 
\iffalse
Namely, we first choose a minimal set of differential equations from the WDVV equations.
Then
we compare them to the Ramanujan identities for $\Gamma(4)$ which are derived using the result in \eqref{eqringofmodularformsgammaN} and the equations in Proposition \ref{propRamanujanforall} for the $N=4$ case, and the corresponding boundary conditions.

The details are given as follows.
\fi
From the above WDVV equations, we obtain 
\begin{equation}\label{eqpolynomialrelations:N=2}
\left\{
\begin{array}{lll}
Z_{2}=Z_{3}=2Z_{4}\,,
&Z_{5}=12 Z_{4}-4(Z_{1}^{2}+Z_{9}^{2})\,,
&Z_{6}=2Z_{4}-Z_{1}^{2}-Z_{9}^{2}\,,\\
Z_{8}=Z_{1}Z_{7}\,,
&Z_{10}=2Z_{11}=Z_{7}^{2}\,,
&Z_{12}=Z_{7}Z_{9}\,.
\end{array}
\right.
\end{equation}
with the additional relation $Z_{7}^{2}=4Z_{1}Z_{9}$.
The system of the WDVV equations reduces to
\begin{equation}\label{eqWDVVsimplified:N=2}
\left\{
\begin{array}{ll}
\theta_{q}Z_{1}&= 8(Z_{7}^{2}Z_{9}-2Z_{1}Z_{4})=16(2Z_{1}Z_{9}^{2}-Z_{1}Z_{4}) \,,\\
\theta_{q}Z_{9}&= 8(Z_{7}^{2}Z_{1}-2Z_{4}Z_{9})=16(2Z_{9}Z_{1}^{2}-Z_{9}Z_{4}) \,,\\
\theta_{q}Z_{4}&
%=  (Z_{4}/Z_{1})\theta_{q} Z_{1}+2 Z_{9}\theta_{q}Z_{9}
=   -16 Z_{4}^{2}+64 Z_{1}^{2}Z_{9}^{2}\,.
\end{array}
\right.
\end{equation}

Recall that the ring $\widetilde{M}_{*}(\Gamma(4))$ of quasi-modular forms for the group $\Gamma(4)$ is given by
\begin{equation*}
\widetilde{M}_{*}(\Gamma(4))\cong M_{*}(\Gamma(4))\otimes \mathbb{C}[E_{4}(Q)]=\mathbb{C}[A_{4}(Q),C_{4}(Q),C_{2}(Q)]
\otimes \mathbb{C}[E_{4}(Q)]\,,
\end{equation*}
where again
$E_{4}(Q)$ refers to the quasi-modular form $E_{N}$ for the $N=4$ case.
Comparing the equations in \eqref{eqWDVVsimplified:N=2} and the corresponding boundary conditions with those
satisfied by the generators $A_{4}(Q),C_{4}(Q),E_{4}(Q)$ and the corresponding boundary conditions given in
Section \ref{sectionRamanujanidentitiesandboundaryconditions}, and using the fact that 
 $\theta_{q}=4\theta_{Q}$,
we find 
\begin{equation}
Z_{1}={1\over 4}A_{4}(Q)\,,\quad
Z_{9}={1\over 4}C_{4}(Q)\,,\quad
Z_{4}={1\over 16}(A_{4}^{2}(Q)-E_{4}(Q))\,.
\end{equation}
The other correlation functions can then be obtained by plugging the above expressions into the formulas in \eqref{eqpolynomialrelations:N=2}.
Summarizing, we arrive at the following conclusion.
\begin{prop}\label{propbasicgenuszero:N=2}
The system of WDVV equations satisfied by the basic GW correlation functions $Z_{1},Z_{9},Z_{4}$ for
$\mathcal{X}_{4}$ is equivalent to the set of Ramanujan identities for the generators $A_{4},C_{4},E_{4}$ belonging to the ring
$\widetilde{M}_{*}(\Gamma(4))$. Moreover, all the basic correlation functions $Z_1,\cdots, Z_{12}$ are quasi-modular forms for $\Gamma(4)$. 
\end{prop}

%%%%%%%%%%%%%%%%%%%%

\subsubsection{$\mathcal{X}_6=\mathbb{P}^{1}_{6,3,2}$}
\label{sectionr=6}

For the elliptic orbifold $\mathcal{X}_6=\mathbb{P}^{1}_{6,3,2}$, again we denote $x=\Delta_{1},y=\Delta_{2},z=\Delta_{3}$.
Similar to the previous cases, we list all the basic correlation functions following the names in \cite{Shen:2013thesis}
$$
\begin{array}{llll}
Z_{1}=\DL  x,x,x^{4}\DR\,,
&Z_{2}=\DL  x,x^{2},x^{3}\DR\,,
&Z_{3}=\DL  y,y,y\DR\,,
&Z_{4}=\DL  x,x^{5},z,z\DR\,,
\\
Z_{5}=\DL x^{2},x^{4},z,z \DR\,,
&Z_{6}=\DL  x^{3},x^{3},z,z\DR\,,
&Z_{7}=\DL  x,x^{5},y,y^{2}\DR\,,
&Z_{8}=\DL  y,y^{2},z,z\DR\,,
\\
Z_{9}=\DL x,x,x^{5},x^{5}\DR\,,
&Z_{10}=\DL  x,y,z\DR\,,
&Z_{11}=\DL  y,y,y^{2},y^{2}\DR\,,
&Z_{12}=\DL  z,z,z,z\DR\,,
\\
Z_{13}=\DL x,y^{2},y^{2},z\DR\,,
&Z_{14}=\DL  x^{2},x^{5},y,z\DR\,,
&Z_{15}=\DL  x^{3},x^{4},y,z\DR\,,
&Z_{16}=\DL  x,x,y^{2}\DR\,,
\\
Z_{17}=\DL x^{2},y,y\DR\,,
&Z_{18}=\DL  x^{2},y^{2},z,z\DR\,,
&Z_{19}=\DL x^{3},x^{5},y,y\DR\,,
&Z_{20}=\DL x^{4},x^{4},y,y\DR\,,
\\
Z_{21}=\DL x,x^{2},z\DR\,,
&Z_{22}=\DL x,x^{3},x^{5},z\DR\,,
&Z_{23}=\DL x,x^{4},x^{4},z\DR\,,
&Z_{24}=\DL x^{3},z,z,z\DR\,,
\\
Z_{25}=\DL x^{3},y,y^{2},z\DR\,, 
&Z_{26}=\DL x,x^{3},y\DR\,,
&Z_{27}=\DL x^{2},x^{2},y\DR\,,
&Z_{28}=\DL x^{4},y,z,z\DR\,,
\\
Z_{29}=\DL x^{4},y,y,y^{2}\DR\,,
&Z_{30}=\DL x,x^{4},x^{5},y\DR\,,
&Z_{31}=\DL x,x^{4},y^{2},z\DR\,,
&Z_{32}=\DL x^{5},y,y,z\DR\,.
\end{array}
$$

Since the number of WDVV equations (see \cite{Shen:2013thesis}) is large, we shall not display them here but leave them to Appendix \ref{appendixcorrelators}. 
The proof for the equivalence between WDVV equations and
Ramanujan identities is straightforward and will be omitted.
Interested readers can check that indeed
the WDVV equations are satisfied by using the explicit expressions for the correlation functions in terms of 
quasi-modular forms.

Now we shall study the modular group for which the correlation functions are quasi-modular forms. 
We can check that all of the basic genus zero correlation functions are polynomials of $Z_{9}=(-1/36)\mathrm{Ei}_{2}(Q)$ and the following correlation functions (for more details, see the discussions in Appendix \ref{appendixcorrelators}),
\begin{equation}\label{eqgenuszeroresultsfor236}
\begin{array}{lll}
Z_{1}={1\over 6}A_{3}(Q)\,,
&Z_{2}={1\over 6}A_{3}(Q^2)\,,
&Z_{16}={1\over 3}C_{3}(Q)\,,\\
Z_{26}={1\over 3}C_{3}(Q^{2})\,,
&Z_{10}={1\over 2}\theta_{({1\over 2}, 0)}(Q)\theta_{({1\over 6},0)}(Q^{3})\,,
&Z_{21}={1\over 4}(\theta_{2}(Q)\theta_{2}(Q^{3}))\,,
\end{array}
\end{equation}
which are the only correlation functions that have modular weight one. Here we have used the fact $q=Q^{1/ 6}$.
Hence it suffices to consider the modularity of these basic correlation functions.

Since $A_{3}(Q),$ $A_{3}(Q^{2})$ 
are known to be generators of ring of modular forms with trivial characters for $\Gamma_{1}(6)$ which is a free $\mathbb{C}$-algebra generated by them \cite{Bannai:2001},
while $C_{3}(Q)={1\over 2}(A_{3}(Q^{1\over 3})-A_{3}(Q))$ is a modular form with trivial character for $\Gamma(3)$ from \eqref{eqringofmodularformsgammaN}, we know all of $Z_{1},Z_{2},Z_{16}$ are modular forms for
$\Gamma_{1}(6)\cap \Gamma(3)$.
In particular, they are modular forms for $\Gamma(6)$. It is also obvious that $Z_{9}$ is a quasi-modular form for $\Gamma(6)$.\\

To find a modular group for which $Z_{26}, Z_{10},Z_{21}$ are modular forms\footnote{This is perhaps well-known to experts. The authors apologize for their ignorance on this point.}, we use the following identities for the modular forms $A_{3},C_{3}$ which are listed in Appendix \ref{appendixthetaexpansions}:
\begin{equation}\label{ACintermsoftheta}
\left\{
\begin{array}{ll}
A_{3}(Q)&=\theta_{2}(Q^{2})\theta_{2}(Q^{6})+\theta_{3}(Q^{2})\theta_{3}(Q^{6})\,,\\
C_{3}(Q)&=
\left(\theta_{(0,0)}(Q^{2})\theta_{({2\over 3},0)}(Q^{6})+\theta_{({1\over 2},0)}(Q^{2})\theta_{({1\over 6},0)}(Q^{6})\right)\,.
\end{array}
\right.
\end{equation}
Now we define
\begin{equation}
f(Q)=\theta_{2}(Q^{2})\theta_{2}(Q^{6})\,,\quad
g(Q)=\theta_{3}(Q^{2})\theta_{3}(Q^{6})\,,\quad
h(Q)=\theta_{4}(Q^{2})\theta_{4}(Q^{6})\,.
\end{equation}
They satisfy the relations \cite{Borwein:1994}
\begin{eqnarray}
g(Q)&=&f(Q)+h(Q)\,,\\
f(Q)&=& {2\over 3}A(Q)-{2\over 3}A(Q^{4})\,,\label{identityg}\\
g(Q)&=& {1\over 3}A(Q)+{2\over 3}A(Q^{4})\,.
\end{eqnarray}
Furthermore, we can show straightforwardly the following identities
\begin{equation}\label{newthetaidentites}
\left\{
\begin{array}{ll}
\theta_{(0,0)}(Q^{2})\theta_{({2\over 3},0)}(Q^{6})={1\over 2}(g(Q^{1\over 3})-g(Q))\,,\\
\theta_{({1\over 2},0)}(Q^{2})\theta_{({1\over 6},0)}(Q^{6})={1\over 2}(f(Q^{1\over 3})-f(Q))\,.
\end{array}
\right.
\end{equation}
Actually, one only needs to show one of them since that the sum of the left hand sides of these two is $C_{3}(Q)={1\over 2}(A_{3}(Q^{1\over 3})-A_{3}(Q))$
and one has $A_{3}(Q)=f(Q)+g(Q)$ by using \eqref{ACintermsoftheta}.

We then express the generating functions $Z_{10},Z_{21}$ in terms of $f,g$ as follows. By using \eqref{identityg}, we get
\begin{equation}
Z_{21}={1\over 4}f(Q^{1\over 2})={1\over 6}(A_{3}(Q^{1\over 2})-A_{3}(Q^{2}))\,.
\end{equation}
From \eqref{newthetaidentites}, we obtain
\begin{equation}
Z_{10}={1\over 4}(f(Q^{1\over 6})-f(Q^{1\over 2})\,.
\end{equation}
Plugging \eqref{identityg} into the above equation and using $C_{3}(Q)={1\over 2}(A_{3}(Q^{1\over 3})-A_{3}(Q))$, we get
\begin{equation}
Z_{10}={1\over 6}(A_{3}(Q^{1\over 6})-A_{3}(Q^{2\over 3})-A_{3}(Q^{1\over 2})+A_{3}(Q^{2}))
={1\over 3}(C_{3}(Q^{1\over 2})-C_{3}(Q^{2}))\,.\\
\end{equation}

We now consider the correlation functions $Z_{26},Z_{21},Z_{10}$ one by one.
We shall use the results in Section \ref{sectionringofquasimodularforms} and the so-called modular
machine (see for instance \cite{Rankin:1977ab}):\\

\textit{
If $f(\tau)$ is a modular form with character $\chi$ for $\Gamma_{0}(N)$, then
$f(\tau),f(M\tau)$ are so for $\Gamma_{0}(MN)$.}\\

For example, from $A_{3}(Q) \in M_{1}(\Gamma_{0}(3),\chi_{-3})$, one knows that $A_{3}(Q^{2})\in M_{1}(\Gamma_{0}(6),\chi_{-3})$ by applying the modular machine,  see e.g., \cite{Bannai:2001}.
From $C_{3}(Q^{3})\in M_{1}(\Gamma_{0}(9),\chi_{-3})$ (see e.g., \cite{Maier:2011}), and the fact that $\Gamma_{0}(9)$ is related to $\Gamma(3)$ by the $3$-isogeny $\tau\mapsto 3\tau$, one knows that
$C_{3}(Q)\in M_{1}(\Gamma(3))$, as already pointed out in Section \ref{sectionringofquasimodularforms}.

Now we look at $C_{3}(Q^{1\over 2})$ and $C_{3}(Q^{2})$. From the fact that $C_{3}(Q^{3})\in M_{1}(\Gamma_{0}(9),\chi_{-3})$, it follows from the modular machine that
$C_{3}(Q^{3}), C_{3}(Q^{12})\in M_{1}(\Gamma_{0}(36),\chi_{-3})$. Now since $\Gamma_{0}(36)$ is related to $\Gamma(6)$ by the $6$-isogeny $\tau\mapsto 6\tau$, we know 
$C_{3}(Q^{1\over 2}), C_{3}(Q^{2})\in M_{1}(\Gamma(6))$.

Now we consider $A_{3}(Q^{1\over 2})$. By using the modular machine, we can easily see that $A_{3}(Q)\in M_{1}(\Gamma_{0}(12),\chi_{-3})$.
Since $\Gamma_{0}(12)$ is related to $\Gamma_{0}(3)\cap \Gamma(2)$ by conjugacy $\tau\mapsto 2\tau$, we see that $A_{3}(Q^{1\over 2})\in M_{1}(\Gamma_{0}(3)\cap \Gamma(2),\chi_{-3})$.
In particular, it is a modular form with trivial character for $\Gamma(6)$.

In sum, all of the correlation functions $Z_{1},Z_{2},Z_{16},Z_{26},Z_{10},Z_{21}$ are modular forms for $\Gamma(6)$.
We then get the following result.
\begin{prop}\label{propbasicgenuszero:N=1}
The system of WDVV equations satisfied by the basic GW correlation functions $Z_{1},Z_{2},Z_{10},Z_{16},Z_{21},Z_{26}, Z_{9}$ for
$\mathcal{X}_{6}$ is equivalent to the set of Ramanujan identities for the corresponding generators belonging to the ring
$\widetilde{M}_{*}(\Gamma(6))$. Moreover, all the basic correlation functions $Z_1, \cdots, Z_{32}$ are quasi-modular forms for $\Gamma(6)$.
\end{prop}

%%%%%%%%%%%%%%%%%

\subsection{Genus zero modularity summarized}
\label{sectonmodularitysummarized}

Now it is easy to see that
Theorem \ref{thmequivalence} follows from
Proposition \ref{propbasicgenuszero:N=4},  \ref{propbasicgenuszero:N=3},  \ref{propbasicgenuszero:N=2}, \ref{propbasicgenuszero:N=1}. 

For the genus zero non-basic correlators, we need to compute them according to the approach outlined in Section \ref{sectionreconstruction}.
For the $r=2$ case there was no non-basic correlators, the results for the $r=3$ case were shown at the end of Section \ref{sectiongenuszero:N=3}.
We also computed (details omitted) the non-basic genus zero correlators for $r=4, 6$ cases and checked that indeed all of them are indeed polynomials of the basic ones and the modular weights are as asserted.
This then yields a computational proof for Theorem \ref{thmprimarygenuszeromodularity}.

For the genus zero correlators, we find that the modular groups involved have a pattern as summarized here in Table \ref{tablemodularitypattern}.
\begin{table}[h]
  %\centering
  \caption[Modular groups summarized]{Modular groups summarized}
  \label{tablemodularitypattern}
  \renewcommand{\arraystretch}{1.2} % single-spacing 1.2 is visually more
                                % appealing than 1
\begin{displaymath}
\begin{tabular}{c|cccc}
 		 $ \mathrm{elliptic~orbifold}$& $ \mathbb{P}^{1}_{2,2,2,2}$ &  $ \mathbb{P}^{1}_{3,3,3}$ &  $ \mathbb{P}^{1}_{4,4,2}$ &  $ \mathbb{P}^{1}_{6,3,2}$\\ \hline
   $ \mathrm{modular~group~for~elliptic~curve~family }$  &$ \Gamma_{0}(4)$ &  $\Gamma_{0}(3)$ &  $ \Gamma_{0}(2)$ &  $\Gamma_{0}(1^{*})$\\
   $ \mathrm{order~of~automorphism~group}~\mathbb{Z}_{r}$  &$r=2$ &  $3$ &  $4$ &  $6$\\
  $ \mathrm{modular~group~for~correlators }$ &$ \Gamma(2) $&$\Gamma(3)$& $\Gamma(4)$& $\Gamma(6)$
  \end{tabular}
	\end{displaymath}
\end{table}
Here the elliptic curve families are the ones mentioned in Remark \ref{remellipticorbifoldandsingularity}.
The results on modular groups for the $r=3,4,6$ cases are consistent with the results in \cite{Milanov:2011, Milanov:2012qu, Milanov:2014} which are obtained by using mirror symmetry. 

%%%%%%%%%%%%%%%%%
%%%%%section 4%%%%%%%%

\section{Tautological relations and higher genus modularity}
\label{sectionhighergenusmodularity}

In this section we shall prove
Theorem \ref{thmhighergenusmodularity} for all the CY orbifolds $\mathcal{X}_{r}, r=1,2,3,4,6$.

%%%%%%%%%%%%%%%%%%%%%%

\subsection{Quasi-modularity at genus one: Getzler's relation}
At genus one, we shall compute the primary genus one correlator $\DL P\DR_{1,1}^{\mathcal{X}_r}$.  This correlation function is important because it will be the only building block besides the genus zero correlation functions in the process of constructing higher genera correlation functions, as will be explained in Section \ref{sectionregreduction}.

For the elliptic curve $\mathcal{X}_1$, we have that $Q=q=e^t$. According to \cite{Bershadsky:1993cx, Dijkgraaf:1995, Kaneko:1995}
\begin{equation}\label{g=1-elliptic}
\DL P\DR_{1,1}^{\mathcal{X}_1}=-{1\over 24}+\sum_{n=1}^{\infty}\sum_{d: d|n} d \exp (2\pi i n\tau)=-{1\over 24}\mathrm{Ei}_2(Q).
\end{equation}

For the elliptic orbifold $\mathbb{P}^1$'s, our tool to compute $\DL P\DR_{1,1}^{\mathcal{X}_r}$ is Getzler's relation given in \cite{Getzler:1997}.
%\subsubsection{Getzler's relation}
It is a linear relation among certain codimension-two classes in $H_*(\overline{\mathcal{M}}_{1,4},\mathbb{Q})$. 
We first review some notations.
The dual graph
\begin{center}
\begin{picture}(50,20)
    \put(-14,9){$\Delta_{1,234}:=$}

    \put(30,14){1}

    \put(40,8){2}
    \put(40,3){3}
    \put(40,-2){4}
	%\multiput(20,-.5)(0,1){2}{\line(1,0){10}}
	%\put(30,0){\line(1,0){10}}

    % Circles
    \put(10,9){\circle*{2}}

	% Lines

	\put(29,4.5){\line(1,0){9}}
	\put(11,9){\line(1,0){9}}
    \put(20,9){\line(2,1){9}}
    \put(20,9){\line(2,-1){9}}
    \put(29,4.5){\line(2,1){9}}
    \put(29,4.5){\line(2,-1){9}}
\end{picture}
\end{center}
 represents a codimension-two stratum in $\overline{\mathcal{M}}_{1,4}$: a filled circle represents a genus one component. 
We have the following $S_4$-invariant of the codimension-two stratum in $\overline{\mathcal{M}}_{1,4}$,
\begin{equation*}
\Delta_{1,234}+\Delta_{2,134}+\Delta_{3,124}+\Delta_{4,123}\,,
\end{equation*}
whose corresponding class in $H_4(\overline{\mathcal{M}}_{1,4},\mathbb{Q})$ will be denoted by $\delta_{3,4}$. We also list the unordered dual graphs for other $S_4$-invariant strata below, see \cite{Getzler:1997} for more details. 
\begin{center}
\begin{picture}(50,20)

    % Circles
    \put(-30,9){\circle*{2}}
	\put(-40,15){$\delta_{2,2}:$}

	% Lines
	\put(-40,9){\line(-3,4){5}}
    \put(-40,9){\line(-3,-4){5}}
	\put(-40,9){\line(1,0){9}}
	\put(-29,9){\line(1,0){9}}
    \put(-20,9){\line(3,4){5}}
    \put(-20,9){\line(3,-4){5}}

    \put(15,9){\circle*{2}}
	\put(10,15){$\delta_{2,3}:$}

	% Lines
	\put(5,9){\line(1,0){9}}
	\put(16,9){\line(1,0){9}}
    \put(25,9){\line(2,1){9}}
    \put(25,9){\line(2,-1){9}}
    \put(34,4.5){\line(2,1){9}}
    \put(34,4.5){\line(2,-1){9}}
    % Circles

    \put(60,9){\circle*{2}}
	\put(60,15){$\delta_{2,4}:$}

	% Lines

	\put(61,9){\line(1,0){9}}
    \put(70,9){\line(2,1){9}}
	\put(70,9){\line(1,0){9}}
    \put(70,9){\line(2,-1){9}}
    \put(79,4.5){\line(2,1){9}}
    \put(79,4.5){\line(2,-1){9}}
	%\multiput(20,-.5)(0,1){2}{\line(1,0){10}}
	%\put(30,0){\line(1,0){10}}

\end{picture}
\end{center}

\begin{center}
\begin{picture}(50,20)

	\put(-40,17){$\delta_{0,3}:$}
	\put(10,17){$\delta_{0,4}:$}
	\put(60,17){$\delta_{\beta}:$}

    % Circles
    \put(-35,9){\circle{10}}

	% Lines

	\put(-21,4.5){\line(1,0){9}}
    \put(-30,9){\line(2,1){9}}
    \put(-30,9){\line(2,-1){9}}
    \put(-21,4.5){\line(2,1){9}}
    \put(-21,4.5){\line(2,-1){9}}

        % Circles
    \put(15,9){\circle{10}}

	% Lines

    \put(29,9){\line(4,3){9}}
    \put(29,9){\line(3,1){9}}
	\put(20,9){\line(1,0){9}}
    \put(29,9){\line(4,-3){9}}
    \put(29,9){\line(3,-1){9}}

	%\multiput(20,-.5)(0,1){2}{\line(1,0){10}}
	%\put(30,0){\line(1,0){10}}

    \put(75,9){\circle{10}}

	% Lines
    \put(80,9){\line(2,1){9}}
    \put(80,9){\line(2,-1){9}}
    \put(70,9){\line(-2,1){9}}
    \put(70,9){\line(-2,-1){9}}
\end{picture}
\end{center}
In \cite{Getzler:1997}, Getzler found the following identity:
\begin{equation}\label{eq:Getzler}
  12\delta_{2,2}-4\delta_{2,3}-2\delta_{2,4}+6\delta_{3,4}+\delta_{0,3}+\delta_{0,4}-2\delta_{\beta}=0\in H_4(\overline{\mathcal{M}}_{1,4},\mathbb{Q}).
\end{equation}

%%%%%%%%%%

%\subsubsection{Application to genus one primary potential of $\mathcal{X}_r, r=2,3,4,6$}

We now use Getzler's relation \eqref{eq:Getzler} to compute $\DL P\DR_{1,1}^{\mathcal{X}_r}(q)$ in this section. 

\begin{ex}[Genus one correlator for $\mathcal{X}_2$]
For the elliptic orbifold $\mathcal{X}_{2}$, we integrate the cohomology class $\Lambda_2:=\Lambda_{1,4}^{\mathcal{X}_2}(\Delta_1,\Delta_2,\Delta_3,\Delta_4)$ over Getzler's relation \eqref{eq:Getzler}. To do this, first we use Splitting Axiom in GW theory to compute
\begin{equation*}
\int_{\Delta_{1,234}}
\Lambda_2
= \DL P\DR_{1,1}^{\mathcal{X}_2}\,\eta^{(P, {\bf 1})}\DL {\bf 1}, \Delta_1, \Delta_1\DR_{0,3}\,\eta^{(\Delta_1,\Delta_1)}\DL\Delta_1,\Delta_2,\Delta_3,\Delta_4\DR_{0,4}=\DL P\DR_{1,1}^{\mathcal{X}_2}(q)\, X(q)\,.
\end{equation*}
The sum over all strata in $\delta_{3,4}$ gives
\begin{equation*}
\int_{\delta_{3,4}}\Lambda_2
=4\, \DL P\DR_{1,1}^{\mathcal{X}_2}(q)\, X(q)\,.
\end{equation*}
This will be the only term which involves the genus one correlator $\DL P\DR_{1,1}^{\mathcal{X}_r}(q)$ when integrating $\Lambda_2$ over \eqref{eq:Getzler}. 
For each of the strata $\delta_{2,2}, \delta_{2,3}, \delta_{2,4}$, although there is a genus one component, after applying the Splitting Axiom, there must be a zero factor (which comes from a genus zero component) in the integration of $\Lambda_2$. Thus
$$\int_{\delta_{2,2}}\Lambda_2
=\int_{\delta_{2,3}}\Lambda_2
=\int_{\delta_{2,4}}\Lambda_2
=0.$$
Similarly, for the other classes, we obtain
\begin{equation*}
\int_{\delta_{0,3}}\Lambda_2
=X(q)\left(16Y(q)+48Z(q)\right)\,, \quad
\int_{\delta_{0,4}}\Lambda_2
=6\,\theta_q\, X(q)\,, \quad
\int_{\delta_{\beta}}\Lambda_2
=48X(q)Z(q)\,.
\end{equation*}
It is easy to see that Getzler's relation implies
$$\DL P\DR_{1,1}^{\mathcal{X}_2}(q)
={-2Y(q)+6Z(q)\over 3}-{\theta_q X(q)\over 4X(q)}
=\frac{Y(q)}{3}+Z(q)=-{1\over 8}E_{4}(q)+{1\over 24}(B_{4}^{2}(q)-A_{4}^{2}(q))\,.$$
The last equality uses the result in \eqref{thmonequivalence:N=4}. Here no negative powers appear, thanks to Remark \ref{remarkpolynomiality} which implies that for any nonzero monomial $f$ in the ring
$\mathbb{C}[A_{4},B_{4},C_{4}]$ the quantity $\theta_{q}f /f$ still lies in this ring.
Again $\DL P\DR_{1,1}^{\mathcal{X}_2}(q)$ is a quasi-modular form for $\Gamma(2)$ when switching to the variable $Q$,
\begin{equation}\label{g=1,r=2}
\DL P\DR_{1,1}^{\mathcal{X}_2}={-3E_4(Q)+2A_4^2(Q)-C_4^2(Q)\over 12}\,.
\end{equation}
%\qed
\end{ex}

For the other three elliptic orbifolds $\mathcal{X}_r$, $r=3,4,6$, we integrate $\Lambda_r:=\Lambda_{1,4}^{\mathcal{X}_r}(\Delta_1^2,\Delta_1^{r-1},\Delta_2,\Delta_3)$ over Getzler's relation. According to the choices of the insertions, again we can verify
$$\int_{\delta_{2,2}}\Lambda_r
=\int_{\delta_{2,3}}\Lambda_r
=\int_{\delta_{2,4}}\Lambda_r
=0.$$
\iffalse
For $\mathcal{X}_3$, we have (see the notations from \eqref{basic-3})
\begin{equation*}
\left\{\begin{array}{l}
\int_{\delta_{3,4}}\Lambda_3
=4Z_5(q)\DL P\DR_{1,1}^{\mathcal{X}_3}\,,\\
\int_{\delta_{0,3}}\Lambda_3
=Z_5(q)\left(72Z_3(q)+144Z_2(q)\right)\,,\\
\int_{\delta_{0,4}}\Lambda_3
=8\,\theta_q\left(Z_5(q)\right)\,,\\
\int_{\delta_{\beta}}\Lambda_3
=Z_5(q)\left(9Z_3(q)+90Z_2(q)\right)+36Z_6(q)Z_6(q)\,.
\end{array}
\right.
\end{equation*}
Applying Getzler's relation and using the results in \eqref{X3-1} and \eqref{X3-2}, we get
\begin{equation}
\DL P\DR_{1,1}^{\mathcal{X}_3}=\frac{-2E_3(Q)+A_3^2(Q)}{12}\,.
\end{equation}
\fi
The rest of the computation is similar to the $\mathcal{X}_2$ case, although more non-basic correlators will be involved. After tedious computations, we get the following formulas 
\begin{eqnarray}
\DL P\DR_{1,1}^{\mathcal{X}_3}&=&\frac{-2E_3(Q)+A_3^2(Q)}{12}\,,\\
\DL P\DR_{1,1}^{\mathcal{X}_4}&=&\frac{-3E_4(Q)+2A_4^2(Q)-C_4^2(Q)}{12}\,,\\
\DL P\DR_{1,1}^{\mathcal{X}_6}&=& 3Z_{9}=-{1\over 12}\mathrm{Ei}_{2}(Q)\,,
\end{eqnarray}
where $Z_{9}$ is described in the results for correlation functions of $\mathbb{P}^{1}_{6,3,2}$ in Appendix \ref{appendixcorrelators} and $\mathrm{Ei}_{2}(Q)$ is the Eisenstein series.
As a consequence, we arrive at the following conclusion.
\begin{thm}\label{thmgenusonemodularity}
For $r=2,3,4,6$, $\DL P\DR_{1,1}^{\mathcal{X}_r}$ is a weight $2$ quasi-modular form for $\Gamma(r)$. Moreover, we have
\begin{equation}\label{g=1}
\DL P\DR_{1,1}^{\mathcal{X}_r}=-{1\over 12}\mathrm{Ei}_{2}(Q)\,.
\end{equation}
\end{thm}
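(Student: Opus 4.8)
The plan is to take the four explicit evaluations of $\DL P\DR_{1,1}^{\mathcal{X}_r}$ produced above by integrating the classes $\Lambda_r$ against Getzler's relation and simplifying with the genus zero formulas, and to show that every one of them collapses to the single universal expression $-\tfrac{1}{12}Ei_{2}(Q)$. The $r=6$ case is already in this form, since there the computation gave $3Z_{9}=-\tfrac{1}{12}Ei_{2}(Q)$ directly. For $r=2,3,4$ the outputs are written in terms of the generators $A_{N},C_{N},E_{N}$ (with $N=4,3,4$ respectively), so the remaining task is purely a verification among quasi-modular forms: rewrite each generator as a rational combination of $Ei_{2}$ evaluated at scaled arguments and check that the combination telescopes.

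Concretely, I would substitute the identities \eqref{eqEintermsofE2} for $E_{N}$ and \eqref{eqGaussSchwarz} for $A_{N}^{2}$, both of which express these generators as $\mathbb{Q}$-linear combinations of $Ei_{2}(\tau),Ei_{2}(N\tau)$ and, for $N=4$, also $Ei_{2}(2\tau)$. For $r=3$ this already settles the claim, since
\begin{equation*}
-2E_{3}+A_{3}^{2}=-2\cdot\frac{3Ei_{2}(3\tau)+Ei_{2}(\tau)}{4}+\frac{1}{2}\bigl(3Ei_{2}(3\tau)-Ei_{2}(\tau)\bigr)=-Ei_{2}(\tau)\,,
\end{equation*}
whence $\DL P\DR_{1,1}^{\mathcal{X}_3}=-\tfrac{1}{12}Ei_{2}(Q)$. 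For $r=2,4$ the numerator in \eqref{g=1,r=2} contains $C_{4}^{2}$, which is not directly covered by \eqref{eqEintermsofE2} or \eqref{eqGaussSchwarz}; here I would first use the ring relation \eqref{ABCrelation} (with $r=2$) in the form $C_{4}^{2}=A_{4}^{2}-B_{4}^{2}$ to rewrite the numerator as $-3E_{4}+A_{4}^{2}+B_{4}^{2}$, and then express $B_{4}^{2}$ as an Eisenstein combination.

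The main obstacle is exactly this last step, namely pinning down $B_{4}^{2}$, since it is the one weight-two generator not immediately handled by \eqref{eqEintermsofE2}--\eqref{eqGaussSchwarz}. I would obtain it either from the $\eta$-quotient for $B_{4}$ in Table \ref{tableetaexpansions} by a logarithmic-derivative computation, or by noting that $B_{4}^{2}$ lies in the two-dimensional space $M_{2}(\Gamma_{0}(4))$ --- which carries no cusp forms --- and is therefore forced to equal $\tfrac{1}{3}Ei_{2}(\tau)-2Ei_{2}(2\tau)+\tfrac{8}{3}Ei_{2}(4\tau)$ by matching its first two $q$-coefficients; Remark \ref{remarkpolynomiality} guarantees that no negative powers intrude. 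Feeding this into $-3E_{4}+A_{4}^{2}+B_{4}^{2}$ makes the $Ei_{2}(2\tau)$ and $Ei_{2}(4\tau)$ terms cancel and leaves $-Ei_{2}(\tau)$, so $\DL P\DR_{1,1}^{\mathcal{X}_2}=\DL P\DR_{1,1}^{\mathcal{X}_4}=-\tfrac{1}{12}Ei_{2}(Q)$. Finally, the weight-two assertion is immediate: each of $E_{N},A_{N}^{2},C_{N}^{2}$ is a weight-two quasi-modular form for $\Gamma(r)$ and $Ei_{2}(Q)$ is the standard weight-two quasi-modular form, in agreement with the general weight formula $T+2D+2g-2$ of Theorem \ref{thmhighergenusmodularity} evaluated at $T=0$, $D=1$, $g=1$.
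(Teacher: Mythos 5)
Your proposal is correct and takes essentially the same route as the paper: both start from the explicit genus-one expressions already obtained from Getzler's relation and collapse them to $-\tfrac{1}{12}Ei_{2}(Q)$ by rewriting the weight-two generators as combinations of $Ei_{2}$ at scaled arguments via \eqref{eqGaussSchwarz} and \eqref{eqEintermsofE2}, the weight claim following from the ring structure. The only deviation is cosmetic: where the paper quotes the identity $C_{4}^{2}(Q)=\tfrac{2}{3}(-2Ei_{2}(Q^{4})+3Ei_{2}(Q^{2})-Ei_{2}(Q))$ from \cite{Maier:2011}, you eliminate $C_{4}^{2}$ through \eqref{ABCrelation} and pin down $B_{4}^{2}$ by a dimension count on $M_{2}(\Gamma_{0}(4))$ (which is sound, since that space has no cusp forms), yielding the same cancellation.
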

\begin{proof}
The statement on quasi-modularity and weight follows from the explicit formulas we computed above, and the ring structure of quasi-modular forms.
Equation \eqref{g=1} follows from \eqref{eqGaussSchwarz}, \eqref{eqEintermsofE2} and the following identity (see for example \cite{Maier:2011}) 
\begin{equation}\label{eqC4relation}
C_{4}^{2}(Q)={2\over 3}(-2\mathrm{Ei}_{2}(Q^{4})+3\mathrm{Ei}_{2}(Q^{2})-\mathrm{Ei}_{2}(Q))\,.
\end{equation}
\end{proof}
Let us conclude this section by proving the following formula:
\begin{equation}\label{g=1-derivative}
{\partial\over \partial E_{N}}\DL P\DR_{1,1}^{\mathcal{X}_r}=-{1\over 2r}=-{1\over 2}+{\mu\over 24}\,.
\end{equation}
Here $E_N$ is the generator in \eqref{defofE} and $\mu$ is the rank of Chen-Ruan cohomology $H$ as a graded vector space. 
The equality$ -{1\over 2r}=-{1\over 2}+{\mu\over 24}$ follows by a straightforward computation.
From \eqref{eqGaussSchwarz}, \eqref{eqEintermsofE2} and the identity that $(N+1)r=12$ for $r=3,4$, we obtain
\begin{equation}
\mathrm{Ei}_{2}(Q)={N+1\over 2}(E_{N}(Q)-A_{N}^{2}(Q))+A_{N}^{2}(Q)={6\over r}(E_{N}(Q)-A_{N}^{2}(Q))+A_{N}^{2}(Q)\,.
\end{equation}
For $N=1^{*}, r=6$, we use the convention $E_{N=1^{*}}:=\mathrm{Ei}_{2}(Q)$ (this is essentially the generator $Z_{9}$ for the $r=6$ case, see Appendix \ref{appendixcorrelators}). For $N=4, r=2$, an easy computation using  \eqref{eqGaussSchwarz}, \eqref{eqEintermsofE2}, \eqref{eqC4relation} shows that 
\begin{equation*}
E_{4}(Q)={1\over 3}\mathrm{Ei}_{2}(Q)+{1\over 3}(2A_{4}^{2}(Q)-C_{4}^{2}(Q))\,.
\end{equation*}
Using the above convention for $E_{N}$, we can then see that up to addition by modular forms, the generators $E_{N}$ and $\mathrm{Ei}_{2}$ for the quasi-modular forms are related by
\begin{equation*}
\mathrm{Ei}_{2}(Q)\equiv {6\over r}E_{N}(Q)\,.
\end{equation*}
If we regard a quasi-modular form $f$ as an element in the polynomial ring $\mathbb{C}[A_{N},B_{N},C_{N}][\mathrm{Ei}_{2}]$ or equivalently $\mathbb{C}[A_{N},B_{N},C_{N}][E_{N}]$, then \eqref{g=1-derivative} follows from \eqref{g=1} and the identity
\begin{equation*}
{\partial\over \partial E_{N}}f={6\over r}{\partial \over \partial \mathrm{Ei}_{2}} f\,.
\end{equation*}

We remark that \eqref{g=1-derivative} is the prototype of holomorphic anomaly equation \cite{Bershadsky:1993ta, Bershadsky:1993cx} for the genus one potential. Higher genus potentials also satisfy similar equations. By solving the differential equations recursively, one gets a more efficient way than working with the combinatorics of the moduli spaces of stable maps in determining the GW correlation functions, see for instance \cite{Alim:2013eja, Zhou:2014thesis} for relevant discussions. The holomorphic anomaly equations adapted to the elliptic orbifolds were proposed in \cite{Milanov:2012}. Further discussions will appear elsewhere.

%%%%%%%%%%%%%%%%%%%%

\subsection{Quasi-modularity for all genera: the $g$-reduction}
\label{sectionregreduction}

%\subsubsection{$g$-reduction and quasi-modularity at higher genus}

We recall the \emph{$g$-reduction technique} introduced in \cite{Faber:2010}. It is a consequence of results by Ionel \cite{Ionel:2002} and Faber--Pandharipande \cite{Faber:2005}:

\

\textit{
Let $M(\psi, \kappa)$ be a monomial of $\psi$-classes and $\kappa$-classes with $\deg M\geq g$ for $g\geq1$ or $\deg M\geq1$ for $g=0$, then $M(\psi, \kappa)$ can be represented by a linear combination of dual graphs on the boundary of $\overline{\mathcal{M}}_{g,n}$.}

\

We use the $g$-reduction technique above to prove the quasi-modularity for higher genera correlation functions.
The main result of this section is stated in Theorem \ref{thmhighergenusmodularity} and is recalled below for reference.
\begin{thm}(Theorem \ref{thmhighergenusmodularity})
The ancestor GW correlation functions of the CY 1-fold $\mathcal{X}_r$ are quasi-modular forms for $\Gamma(r)$, $r=1,2,3,4,6$.
The weights are given by 
\begin{equation}\label{weight-g}
w=T+2D+2g-2\,.
\end{equation}
\end{thm}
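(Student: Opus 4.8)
The plan is to induct on the pair $(g,n)$ ordered lexicographically (equivalently, on the complexity $3g-3+n$), taking as the base of the induction the two results already in hand: the genus zero primary correlators, which are quasi-modular of weight $T+2D-2$ by Theorem~\ref{thmprimarygenuszeromodularity}, and the one-point function $\DL P\DR_{1,1}^{\mathcal{X}_r}$, which is a weight $2$ quasi-modular form by Theorem~\ref{thmgenusonemodularity}. The first thing I would record is the dimension constraint coming from \eqref{dim-virt}: a nonzero correlator $\DL \phi_1\psi_1^{k_1},\dots,\phi_n\psi_n^{k_n}\DR_{g,n}$ must satisfy $\sum_i(\deg\phi_i/2)+\sum_i k_i=2g-2+n$. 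Since $\deg\phi_i/2\le 1$ with equality only for $\phi_i=P$, this forces $\sum_i k_i\ge 2g-2$. In particular, for every $g\ge 2$ one has $\sum_i k_i\ge 2g-2\ge g$, so the $g$-reduction hypothesis $\deg\bigl(\prod_i\psi_i^{k_i}\bigr)\ge g$ is automatically met; for $g=1$ the only correlators with $\sum_i k_i=0$ are the all-$P$ correlators $\DL P,\dots,P\DR_{1,n}$, and for $g=0$ one only needs $\sum_i k_i\ge 1$ to reduce, while the $\sum_i k_i=0$ correlators are exactly the primary ones already covered by the base case.

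The inductive step then splits into three regimes. When $\sum_i k_i$ is large enough to apply $g$-reduction, I would rewrite the monomial $\prod_i\psi_i^{k_i}$ as a linear combination of decorated boundary strata of $\overline{\mathcal{M}}_{g,n}$. Restricting the GW class $\Lambda_{g,n}^{\mathcal{X}_r}$ to each stratum and applying the orbifold splitting axiom (summing over the twisted sectors at each node against the dual basis of $\mathfrak{B}$), the integral becomes a finite sum of products of ancestor correlators on the normalizations of the strata; any residual $\kappa$-classes are converted into $\psi$-classes on an extra marked point via $\kappa_a=\pi_*\psi^{a+1}$, keeping every factor within the ancestor framework. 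Each such factor lives on a component of strictly smaller genus, or of the same genus with strictly fewer marked points, hence is quasi-modular by the induction hypothesis; products and sums of quasi-modular forms are quasi-modular, so the correlator is too. For the remaining $g=1$ all-$P$ case I would apply the divisor equation \eqref{theta-q} repeatedly to write $\DL P,\dots,P\DR_{1,n}=\theta_q^{\,n-1}\DL P\DR_{1,1}^{\mathcal{X}_r}$; since $\theta_q=r\theta_Q$ preserves quasi-modularity and raises weight by $2$, and by Remark~\ref{remarkpolynomiality} introduces no negative powers of the generators, these are quasi-modular as well.

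The weight count I would verify separately, but it is forced by additivity. Each application of $\theta_q$ raises the weight by $2$, matching the fact that inserting a $P$ raises $D$ by one. At a node one sums over dual pairs $(\phi_\mu,\phi^\mu)$ from $\mathfrak{B}$; in every case ($\mathbf 1$ with $P$, $P$ with $\mathbf 1$, or a pair of twisted sectors) the pair contributes exactly $+2$ to $T+2D$ across the two components, so that for a separating node $w_1+w_2=(T+2D+2)+2(g_1+g_2)-4=T+2D+2g-2$, and a non-separating node gives $(T+2D+2)+2(g-1)-2=T+2D+2g-2$. Since the $\psi$- and $\kappa$-decorations carry no weight in this grading, the weight is preserved through every reduction, yielding $w=T+2D+2g-2$ in all cases.

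The main obstacle, as I see it, is not any single hard estimate but the careful organization of the reduction: one must check that the orbifold splitting axiom together with the conversion of $\kappa$-classes keeps each boundary contribution genuinely inside the ancestor formalism, that the induction on $(g,n)$ is well-founded (a separating node always produces genus-$g$ components carrying strictly fewer marked points, while a non-separating node strictly lowers the genus), and that the three regimes above exhaust all nonzero correlators. The conceptual heart is the elementary consequence of the Calabi-Yau dimension formula \eqref{dim-virt} that $\sum_i k_i\ge 2g-2$, which is precisely what makes $g$-reduction applicable for all $g\ge 2$ and thereby reduces the entire higher-genus theory to the two modular building blocks established above.
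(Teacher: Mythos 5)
Your proposal is correct and follows essentially the same route as the paper's proof: the dimension formula forces $\sum_i k_i \ge 2g-2$, the $g$-reduction then rewrites every correlator in terms of genus-zero primary correlators and $\DL P,\cdots,P\DR_{1,n}=\theta_q^{\,n-1}\DL P\DR_{1,1}$, and the weight formula follows from additivity of $T+2D+2g-2$ under splitting at nodes (separating or not). Your explicit induction on $(g,n)$ and the conversion $\kappa_a=\pi_*\psi^{a+1}$ simply make precise bookkeeping that the paper leaves implicit.
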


\begin{proof}
Consider the GW correlation function in \eqref{GW-correlation} given by
$$
\DL\phi_1\psi_1^{k_1},\cdots,\phi_n\psi_n^{k_n}\DR_{g,n}
=\int_{\overline{\mathcal{M}}_{g,n}}\Lambda_{g,n}(\phi_1,\cdots,\phi_n)\,\prod_{i=1}^{n}\psi_i^{k_i}.
$$
According to the dimension formula in \eqref{dim-virt},  the above correlation function vanishes unless
the degree of the insertions is equal to 
2 times the virtual dimension 
of the moduli space,
 that is,
\begin{equation}\label{dim-nonzero}
{1\over 2}\sum_{i=1}^n\deg\phi_{i}+\sum_{i=1}^{n}k_i=(3-\dim_{\mathbb{C}}\mathcal{X}_r)(g-1)+n=2g-2+n\,.
\end{equation}
Since $0\leq \deg\phi_{i}\leq 2$, the above formula implies that if
$
\sum_{i=1}^{n}k_i<2g-2,
$
then the correlator $\DL\phi_1\psi_1^{k_1},\cdots,\phi_n\psi_n^{k_n}\DR_{g,n}$ vanishes.

On the other hand, if 
\begin{equation*}
\deg\left(\prod_{i=1}^{n}\psi_i^{k_i}\right)
:=\sum_{i=1}^{n}k_i\geq
\left\{
\begin{array}{ll}
2g-2,& g\geq1,\\
1,& g=0,
\end{array}
\right.
\end{equation*}
then $\prod_{i=1}^{n}\psi_i^{k_i}$ is a monomial satisfying the condition for the $g$-reduction, thus we can apply this technique to reduce its degree.
Hence by repeatedly using the vanishing condition and the $g$-reduction, it eventually allows us to rewrite any non-vanishing correlation function $\DL\phi_1\psi_1^{k_1},\cdots,\phi_n\psi_n^{k_n}\DR_{g,n}$ as a product of genus zero and genus one primary correlation functions. Moreover, the non-vanishing genus one primary correlation function must be of the form $\DL P,\cdots,P\DR_{1,n}, n\geq1$. 

Therefore it is enough to prove that all the primary correlation functions in genus zero and genus one are quasi-modular forms.
This follows from Theorem \ref{thmprimarygenuszeromodularity}, Theorem \ref{thmgenusonemodularity}, equation \eqref{g=1-elliptic}, and the fact that all the $\theta_q$-derivatives of these quasi-modular forms are polynomials of the generators for the ring of quasi-modular forms (thus are quasi-modular themselves).
In particular, for the elliptic curve case, the basic genus zero correlation functions are constants since only constant maps would contribute. For genus one, as discussed earlier, $\DL P \DR_{1,1}^{\mathcal{X}_1}=-\mathrm{Ei}_{2}(Q)/24$. The Ramanujan identities for the full modular group described in \eqref{eqRamanujanintro} show that the $\theta_Q$-derivatives of $\DL P \DR_{1,1}^{\mathcal{X}_1}$ lie in the ring finitely generated by $\mathrm{Ei}_{2}(Q),\mathrm{Ei}_{4}(Q),\mathrm{Ei}_{6}(Q)$.

The weight formula \eqref{weight-g} holds true for primary genus zero correlation functions and primary genus one correlation function $\DL P,\cdots,P\DR_{1,n}$ by straightforward calculation. 
Since these are the whole building blocks for the $g$-reduction, we obtain the weight formula \eqref{weight-g} by induction.
To be more precise, suppose there is a node that splits the dual graph into two separate components.  Let us denote $T_1, T_2$ to be the number of twisted sectors on the two components, and $D_1, D_2$ be the number of $P$-classes. Then we get
\begin{equation*}
T+2=T_1+T_2, \quad D=D_1+D_2\,,
\quad \mathrm{or}\quad
T=T_1+T_2, \quad D=D_1+D_2-1\,.
\end{equation*}
For the first case, we attach two twisted sectors at the nodes for the two components .
For the second case, we attach the classes ${\bf 1}$ and $P$. In either case, we get 
\begin{equation*}
T+2D=T_1+T_2-2+2D_1+2D_2\,.
\end{equation*}
Thus, using $g=g_1+g_2$, we obtain the following formula from induction:
\begin{equation*}
w=w_1+w_2=(2g_1-2+T_1+2D_1)+(2g_2-2+T_2+2D_2)=2g-2+T+2D\,.
\end{equation*}
A similar argument works for boundary classes where no node splits the dual graph.
\end{proof}

In particular, for the elliptic curve case, an ancestor GW correlation function with only divisor classes and $\psi$-classes as insertions has weight $w=2n+2g-2$.
This is consistent with the result on modular weights for stationary descendant GW correlators studied in \cite{Dijkgraaf:1995, Kaneko:1995, Eskin:2001, Okunkov:2002}.

%\subsubsection{Some discussions on higher genus}
%\label{sectionhighergenusforall}

According to $g$-reduction, an explicit formula representing a monomial $M(\psi, \kappa)$ in terms of boundary cycles will give rise to a formula for the corresponding correlation function in terms of the generators of the corresponding ring of quasi-modular forms. 
For example, we can compute $\DL P\psi_1^2\DR_{2,1}^{\mathcal{X}_{r}}$
using a tautological relation found by Mumford \cite{Mumford:1983}, which says $\psi_1^2$ can be represented by a boundary cycle on $\overline{\mathcal{M}}_{2,1}$. An explicit formula is given in \cite{Faber:thesis, Getzler:1998}. Based on that formula, we obtain:
\begin{equation}\label{eqhighergenusresults}
\DL P\psi_1^2\DR_{2,1}^{\mathcal{X}_{r}}=
{7\over 5}(\DL P\DR_{1,1}^{\mathcal{X}_{r}})^{2}+({1\over 10}+{1\over 10r}+{c_{r}\over 120})\theta_{q}(\DL P\DR_{1,1}^{\mathcal{X}_{r}})\,, 
\end{equation}
where $c_{r}=48,144,252, 480$ for $r=2,3,4,6$.

For higher genus or more insertions, it is still possible to get some closed formulas by the tautological relations found in the literature, for example, in \cite{Getzler:1998, Belorousski:2000, Kimura:2006}, and
in \cite{Pixton:2012, Pandharipande:2015}, which give the most general results. However, the complexity of combinatorics increases very quickly as the genus grows.

%%%%%%%%%%%%%%%%%%%%%%
%%%%%%%appendices%%%%%%%%%%%

\newpage

\begin{appendices}
%\numberwithin{equation}{section}

%%%%%%%%%%%%%

\section{Modular forms}
\label{appendixthetaexpansions}

We use the convention
$\Theta_{\Lambda}(\tau)=\sum_{x\in \Lambda} \exp ( 2\pi i \cdot {1\over 2}|x|^{2})$ for the theta function of the lattices $\Lambda=E_{8}, D_{4}, A_{2}, A_{1}\oplus A_{1}$, etc. 
We also denote 
\begin{equation}
\theta_{a,b}(z,\tau)=\sum_{n\in \mathbb{Z}} \exp 2\pi i \left({1\over 2}(n+a)^{2}\tau+ (n+a)(z+b)\right) 
\end{equation}
for the theta functions with characters.
We use the following convention for the $\theta$-constants:
\begin{equation}
\theta_{2}(\tau)=\theta_{({1\over 2},0)}(0,\tau),\quad 
\theta_{3}(\tau)=\theta_{(0,0)}(0,\tau),\quad 
\theta_{4}(\tau)=\theta_{(0,{1\over 2})}(0,\tau)\,.
\end{equation}

The expressions for the modular forms $A,B,C$ for the modular group $\Gamma_{0}(N)$ in terms of $\theta$-functions are as follows ($Q=\exp (2\pi i\tau)$)
\begin{eqnarray*}
&N=1^{*}:&\, A(\tau)=\Theta_{E_{8}}^{1\over 4}(\tau)=\mathrm{Ei}_{4}(\tau)^{1\over 4}\,.\\
&N=2:&\nonumber\,\\
&A(\tau)&=\Theta_{D_{4}}^{1\over 2}(\tau) =\left({1\over
4}(\theta_{3}^{4}(\tau)+\theta_{4}^{4}(\tau))^{2}\right)^{1\over 4}
=\left((\theta_{2}^{4}(2\tau)+\theta_{3}^{4}(2\tau))^{2}\right)^{1\over 4}\,,\\
&B(\tau)&=\theta_{3}(\tau)\theta_{4}(\tau)=\theta_{4}^{2}(2\tau)\,,\\
&C(\tau)&=2^{-{1\over 2}}\theta_{2}^{2}(\tau)=2^{1\over 2}
\theta_{2}(2\tau)\theta_{3}(2\tau)\,.
\end{eqnarray*}
\begin{eqnarray*}
&N=3:
&\nonumber\\
& A(\tau)&=\sum_{(m,n)\in
\mathbb{Z}^{2}}Q^{m^{2}+mn+n^2}=\Theta_{A_{2}}(\tau)=
\theta_{2}(2\tau)\theta_{2}(6\tau)+\theta_{3}(2\tau)\theta_{3}(6\tau),\\
&B(\tau)&=\sum_{(m,n)\in \mathbb{Z}^{2}}e^{2\pi i {m-n\over 3}}Q^{m^{2}+mn+n^2}\,,\\
&C(\tau)&=\sum_{(m,n)\in \mathbb{Z}^{2}}Q^{(m+{1\over 3})^{2}+(m+{1\over 3})(n+{1\over 3})+(n+{1\over 3})^2}=\sum_{(m,n)\in \mathbb{Z}^{2}}Q^{m^{2}+mn+n^2+m+n+{1\over 3}}\\
&&={1\over 2}(A({\tau\over 3})-A(\tau))\,,\\
&&=\theta_{0,0}(0,2\tau)\, \theta_{{2\over 3},0}(0,6\tau)\,+
\theta_{{1\over 2},0}(0,2\tau)\,
\theta_{{1\over 6},0}(0,6\tau)\,.\\
&N=4:&\, A(\tau)=\Theta_{A_{1}\oplus
A_{1}}(\tau)=\theta_{3}^{2}(2\tau),\quad
B(\tau)=\theta_{4}^{2}(2\tau),\quad C(\tau)=\theta_{2}^{2}(2\tau)\,.
\end{eqnarray*}

See for instance, \cite{Borwein:1991, Berndt:1995, Mohri:2001zz,
Zagier:2008, Maier:2009, Maier:2011} for details.

\newpage

\section{WDVV equations and correlation functions for $\mathbb{P}^{1}_{6,3,2}$ case}
\label{appendixcorrelators}

%\subsection{WDVV equations and correlation functions for $\mathbb{P}^{1}_{6,3,2}$ case}

In this appendix we display the WDVV equations for the basic correlation functions of the $\mathbb{P}^{1}_{6,3,2}$ case. These are taken from \cite{Shen:2013thesis}.
\begin{eqnarray}
&&\theta_{q}^{2}Z_{1}=12 Z_{31}\theta_{q}Z_{10}+24Z_{23}\theta_{q}Z_{21}-24Z_{4}\theta_{q}Z_{1}\,,\\
&&0=3 Z_{26}\theta_{q}Z_{16}-3Z_{16}\theta_{q}Z_{26}+6 Z_{2}\theta_{q}Z_{1}-6Z_{1}\theta_{q}Z_{2}\,,\label{eqminimalwdvv2}\\
&&\theta_{q}^{2}Z_{3}=24Z_{32}\theta_{q}Z_{10}+12Z_{10}\theta_{q}Z_{32}-24Z_{28}\theta_{q}Z_{17}-12Z_{8}\theta_{q}Z_{3}\,,\\
&& \theta_{q}Z_{1}=6 Z_{10}Z_{31}-12 Z_{1}Z_{4}+12 Z_{21}Z_{23}\,,\\
&& 2Z_{1}Z_{4}-2Z_{1}Z_{5}-Z_{16}Z_{28}+Z_{10}Z_{31}+2Z_{21}Z_{23}=0\,,\\
&&6 Z_{2}Z_{5}-6Z_{2}Z_{6}-2Z_{21}Z_{24}+3Z_{18}Z_{26}+6Z_{21}Z_{22}=0\,,\\
&&\theta_{q}Z_{1}=18Z_{16}Z_{30}-18Z_{1}Z_{7}\,,\label{eqminimalwdvv1}\\
&&\theta_{q}Z_{3}=24Z_{10}Z_{32}-6Z_{3}Z_{8}-12Z_{17}Z_{28}\,,\\
&&\theta_{q}Z_{9}=8Z_{4}^{2}-24Z_{4}Z_{9}+24Z_{22}^{2}\label{eqminimalwdvv7}\,,\\
&&\theta_{q}Z_{10}=18Z_{7}Z_{10}-36Z_{9}Z_{10}+36Z_{21}Z_{30}\,,\label{eqderiveZ7Z91}\\
&&\theta_{q}Z_{10}=6Z_{8}Z_{10}-9Z_{10}Z_{11}+18Z_{25}Z_{26}-18Z_{21}Z_{29}\,,\label{eqminimalwdvv5}\\
&&\theta_{q}Z_{10}=6Z_{8}Z_{10}-4Z_{10}Z_{12}+12Z_{4}Z_{10}+12Z_{21}Z_{28}-12Z_{24}Z_{26}\,,\\
&&\theta_{q}Z_{10}=-18Z_{7}Z_{10}+9Z_{3}Z_{13}+18Z_{17}Z_{31}\,,\label{eqderiveZ7Z92}\\
&&\theta_{q}Z_{10}=-18Z_{7}Z_{10}+36Z_{1}Z_{14}+18Z_{16}Z_{32}-36Z_{21}Z_{30}\,,\\
&& 3Z_{1}Z_{14}-3Z_{2}Z_{15}-Z_{21}Z_{28}+3Z_{21}Z_{30}=0\,,\\
&&\theta_{q}Z_{16}=6Z_{10}Z_{13}-12Z_{4}Z_{16}+12Z_{21}Z_{31}\,,\label{eqsolvefore81}\\
&&\theta_{q}Z_{17}=12 Z_{10}Z_{14}-6Z_{8}Z_{17}-12Z_{27}Z_{28}+12Z_{21}Z_{32}\,,\label{eqsolvefore82}\\
&& 3Z_{10}Z_{13}-6Z_{1}Z_{18}-3Z_{8}Z_{16}+6Z_{4}Z_{16}+6Z_{21}Z_{31}=0\,,\\
&&\theta_{q}Z_{17}=-18Z_{7}Z_{17}+36Z_{2}Z_{19}-36 Z_{27}Z_{30}+12Z_{21}Z_{32}\,,\\
&& 2Z_{1}Z_{19}-2Z_{2}Z_{20}+2Z_{26}Z_{30}-Z_{26}Z_{29}=0\,,\\
&&\theta_{q}Z_{21}=18Z_{27}Z_{31}-18Z_{7}Z_{21}+9Z_{13}Z_{17}\,,\\
&&\theta_{q}Z_{21}=36Z_{2}Z_{22}+12Z_{4}Z_{21}-36Z_{9}Z_{21}\,,\label{eqminimalwdvv6}\\
&& 6Z_{1}Z_{22}-6Z_{2}Z_{23}-3Z_{26}Z_{31}=0\,,\\
&&\theta_{q}Z_{21}=12Z_{5}Z_{21}-12Z_{2}Z_{24}+12Z_{4}Z_{21}+6Z_{10}Z_{18}-4Z_{12}Z_{21}\,,\\
&& -Z_{3}Z_{25}+2Z_{26}Z_{32}+2Z_{10}Z_{19}-2Z_{15}Z_{17}=0\,,\\
&&\theta_{q}Z_{26}=12Z_{10}Z_{22}-12Z_{4}Z_{26}\,,\label{eqminimalwdvv4}\\
&& 2Z_{10}Z_{21}-3Z_{16}Z_{17}+6Z_{2}Z_{26}-6Z_{1}Z_{27}=0\,,\\
&& \theta_{q}Z_{26}=6 Z_{10}Z_{25}-6Z_{8}Z_{26}+12Z_{15}Z_{21}-12Z_{2}Z_{28}\,,\\
&& \theta_{q}Z_{26}=18Z_{7}Z_{26}-9Z_{11}Z_{26}+18Z_{16}Z_{19}-18Z_{2}Z_{29}\,,\\
&&\theta_{q}Z_{26}=-36Z_{9}Z_{26}+18Z_{7}Z_{26}+36Z_{2}Z_{30}\,,\\
&& 2Z_{16}Z_{22}-Z_{13}Z_{26}-2Z_{2}Z_{31}=0\,,\label{eqsolveforZ22}\\
&& Z_{10}Z_{28}+3Z_{15}Z_{26}-3Z_{10}Z_{30}-3Z_{1}Z_{32}=0\,.
\end{eqnarray}

\paragraph{Polynomial relations}
The correlation functions satisfy the polynomial relations derived from the WDVV equations
$$\begin{array}{lll}
Z_{3}=2Z_{1}\,,
&Z_{6}=Z_{4}\,,
&Z_{11}=4Z_{9}\,,\\
Z_{13}=2Z_{14}=2Z_{15}\,,
&Z_{17}=Z_{16}\,,
&Z_{24}=6Z_{22}\,,\\
Z_{27}=0\,,
&Z_{29}=2Z_{30}\,,
&Z_{32}=2Z_{31}\,,\\
Z_{13}=2Z_{1}Z_{10}\,,
&Z_{18}=Z_{10}^{2}\,,
&Z_{19}=2Z_{2}Z_{16}\,,\\
Z_{20}=2Z_{1}Z_{16}\,
&Z_{23}=2Z_{2}Z_{21}\,,
&Z_{25}=Z_{16}Z_{10}\,,\\
Z_{28}=2Z_{10}Z_{21}\,,
&Z_{30}=Z_{16}^{2}/2\,,
&Z_{31}=Z_{16}Z_{21}=Z_{26}Z_{10}\,.
\end{array}
$$
The other relations are found to be
\begin{eqnarray*}
&&Z_{5}=Z_{4}-Z_{21}^{2}\,,\\
&&Z_{7}=Z_{9}+Z_{1}^{2}\,,\\
&&Z_{8}=2Z_{4}+2Z_{21}^{2}\,,\\
&&Z_{12}=3 Z_{4} + 9 Z_{9}\,,\\
&& Z_{22}=(Z_{1}+Z_{2})Z_{21}\,.
\end{eqnarray*}
Therefore, all of the generators are polynomials of $Z_{1},Z_{2},Z_{16},Z_{26},Z_{10},Z_{21}$ and $Z_{4},Z_{9}$, while $Z_{4}$ itself is given by
 \begin{equation*}
Z_{4}
%={1\over 2}(2Z_{10}Z_{21}(Z_{1}+Z_{2})-3Z_{2}Z_{16}^{2})Z_{26}^{-1}-{3\over 2}(Z_{1}^{2}-Z_{9})
=-{3\over 2} Z_{1}^{2} +3Z_{1}Z_{2}+Z_{21}^{2} + {3\over 2}Z_{9}
={3\over 2} Z_{1}^{2} +3Z_{1}Z_{2}-3Z_{2}^{2} + {3\over 2}Z_{9}\,.
\end{equation*}
\paragraph{Equivalence between WDVV equations and Ramanujan identities and full solutions in terms of quasi-modular forms}
We then take the minimal set of WDVV equations to be
\eqref{eqminimalwdvv1},  \eqref{eqminimalwdvv2}, \eqref{eqsolvefore81}, \eqref{eqminimalwdvv4}, \eqref{eqminimalwdvv5}, \eqref{eqminimalwdvv6}, \eqref{eqminimalwdvv7}. Lengthy computations show that these equations satisfied by the correlation functions coincide with the equations satisfied by 
the quasi-modular forms listed in \eqref{eqgenuszeroresultsfor236} with the same boundary conditions.
\iffalse
\begin{eqnarray*}
&&Z_{1}={1\over 6}A_{3}(Q)\,,\\
&&Z_{2}={1\over 6}A_{3}(Q^{2})\,,\\
&&Z_{16}={1\over 3}C_{3}(Q)\,,\\
&&Z_{26}={1\over 3}C_{3}(Q^{2})\,,\\
&&Z_{10}={1\over 2}\theta_{({1\over 2},0)}(Q)\theta_{({1\over 2},0)}(Q^3)={1\over 3}(C_{3}(Q^{1\over 2})-C_{3}(Q^{2}))\,,\\
&&Z_{21}={1\over 4}\theta_{2}(Q)\theta_{2}(Q^{3})={1\over 6}(A_{3}(Q^{1\over 2})-A_{3}(Q^{2}))\,,\\
&&Z_{9}=-{1\over 36}Ei_{2}(Q)\,.
\end{eqnarray*}
Here $E_{4}(Q)$ (and $E_{3}(Q)$ below) denotes the generator $E_{N}$ for the ring of quasi-modular forms for the $N=4$ ($N=3$) case, while $Ei_{2}(Q)$ is the ordinary Eisenstein series. 

Since the quantities $Z_{1},Z_{2},Z_{16},Z_{26},Z_{10},Z_{21},Z_{9}$
are shown in Section \ref{sectionequivalence} to be quasi-modular forms for $\Gamma(6)$,
all of basic genus zero correlation functions are quasi-modular forms for $\Gamma(6)$.

In particular, we have the following nice expressions for some other correlation functions
\begin{eqnarray*}
&&Z_{4}={1\over 16}(B_{4}(Q^{1\over 2})-E_{4}(Q^{1\over 2}))={1\over 8} (A_{4}^{2}(Q)-E_{4}(Q))\,,\\
&& Z_{7}={1\over 18}(A_{3}^{2}(Q)-E_{3}(Q)\,,\\
&&Z_{12}={1\over 4}(Ei_{2}(Q^{2})-2Ei_{2}(Q))\,,\\
&&Z_{20}={1\over 9}A_{3}(Q)C_{3}(Q)\,,\\
&&Z_{22}={1\over 12}A_{4}(Q)C_{4}(Q)\,,\\
&&Z_{30}={1\over 18}C_{3}^{2}(Q)\,.
\end{eqnarray*}
\fi

\end{appendices}

\providecommand{\bysame}{\leavevmode\hbox to3em{\hrulefill}\thinspace}
\providecommand{\MR}{\relax\ifhmode\unskip\space\fi MR }
% \MRhref is called by the amsart/book/proc definition of \MR.
\providecommand{\MRhref}[2]{%
  \href{http://www.ams.org/mathscinet-getitem?mr=#1}{#2}
}
\providecommand{\href}[2]{#2}

\bigskip{}

\noindent{\small Department of Mathematics,  Building 380, Stanford, California 94305, USA}

\noindent{\small Email: \tt yfshen@stanford.edu}

\medskip{}
\noindent{\small Perimeter Institute for Theoretical Physics, 31 Caroline Street North, Waterloo, Ontario N2L 2Y5, Canada}

\noindent{\small Email: \tt jzhou@perimeterinstitute.ca}

\end{document}